\newcommand{\E}{\mathbb{E}}
\newcommand{\R}{\mathbb{R}}
\newcommand{\N}{\mathbb{N}}
\renewcommand{\d}{\, \mathrm{d}}
\newcommand{\ex}{\mathbb E}
\newcommand{\supp}{\mathrm{supp}}
\numberwithin{equation}{section} 
\numberwithin{table}{section} 
\newcounter{dummy} 
\numberwithin{dummy}{section}
\newtheorem{theorem}[dummy]{Theorem}
\newtheorem{lemma}[dummy]{Lemma} 
\newtheorem{proposition}[dummy]{Proposition}
\newtheorem{remark}[dummy]{Remark}
\newtheorem{corollary}[dummy]{Corollary}
\title{Telegrapher's Generative Model via Kac Flows}
\author{Richard Duong\footnotemark[1]
\and
Jannis Chemseddine\footnotemark[1]
\and
Peter K. Friz
\and
Gabriele Steidl
 }
\date{\today}
\begin{document}

\maketitle
\footnotetext{Institute of Mathematics,
TU Berlin,
Stra{\ss}e des 17. Juni 136, 
10623 Berlin, Germany,
\{duong, chemseddine, friz, steidl\}@math.tu-berlin.de
}

\renewcommand*{\thefootnote}{\fnsymbol{footnote}}
\footnotetext[1]{With equal contributions. Corresponding authors.}
\renewcommand*{\thefootnote}{\arabic{footnote}}

\begin{abstract}
We break the mold in flow-based generative modeling by proposing a new model based on the damped wave equation, also known as telegrapher's equation. Similar to the diffusion equation and Brownian motion, there is a Feynman-Kac type relation between the telegrapher's equation and the stochastic Kac process in 1D.
The Kac flow evolves stepwise linearly in time, so that the probability flow is Lipschitz continuous in the Wasserstein distance and, in contrast to diffusion flows, the norm of the velocity remains globally bounded.
Furthermore, the Kac model has the diffusion model as its asymptotic limit.
We extend these considerations to a multi-dimensional stochastic process which consists of independent 1D Kac processes in each spatial component.
We show that this process gives rise to an absolutely continuous curve in the Wasserstein space and analytically compute the conditional velocity field when starting in a Dirac point. Using the framework of flow matching, we train a neural network to approximate the velocity field and use it for sample generation.
Our numerical experiments demonstrate the scalability of our approach, and show its advantages over diffusion models.
\end{abstract}

\section{Introduction}
Among generative neural models, flow-based techniques as score-based diffusion
and flow matching 
stand out for their simple applicability and good scaling properties.
The general idea is to construct a probability flow which starts in a simple latent distribution and leads to a possibly complicated target distribution. A neural network is then trained to learn this flow while using only a few samples from the target. 
In \emph{score-based diffusion models} \cite{pmlr-v37-sohl-dickstein15, SE2019}, 
the so-called score is learned which directs the drift-diffusion flow backwards in time from the latent to the target via a certain reverse SDE. In \emph{flow matching} \cite{lipman2023flow,liu2022rectified,liu2023flow} the velocity field is learned 
enabling the calculation of the flow's trajectories via a certain flow ODE. These two approaches produce state-of-the-art results not only in pure sample generation, but can also be incorporated into inverse problem solvers, see e.g.\
\cite{MBHS2025,zhu2023denoising}. A unifying perspective can be given via the ``stochastic interpolants'' framework, see \cite{ABV2023,AV2022}.

Very little research has been dedicated to exploring new approaches to flow modeling using other physics-inspired PDEs. In \cite{Poisson2022}, a physics-inspired generative model  was proposed using the Poisson equation.
The authors of \cite{GenPhys2023} pointed to further
\emph{physical processes and PDEs} and generally examined them with respect to their applicability in generative modeling, among them approaches based on the Schr\"odinger equation, Helmholtz equation, and the above-mentioned Poisson and diffusion equation. More generally, they  identify two conditions mandatory for a model to be suited for data generation:
\begin{enumerate}
    \item ``Well-behaved density flow'': Initial probability densities stay probability densities, and their evolution is well-behaved.
    \item ``Asymptotic behavior'': Long-term convergence of the flow to a (known) stationary distribution independent of the initial distribution.%
    
\end{enumerate}
The first condition can be split into two separable conditions: \emph{conservation of mass} and \emph{regularity of the flow}. 
While the meaning of the former is clear, the latter essentially depends on the choice of metric. The 
\emph{Wasserstein distance} $W_2$ provides a suitable metric for measuring the regularity of a probability flow $\mu_t$, and  
it is known that the metric derivative of the flow 
$\mu_t$ 
can be equivalently described by the $L_2$-norm $\|v_t\|_{L_2(\mu_t)}$ of the corresponding velocity field $v_t$. 
It has been observed by the authors of \cite{CWDS2025} that a blow-up of this norm can cause severe problems for the neural network to approximate the flow, leading to issues like approximation errors or even mode collapse. Intriguingly, also diffusion models suffer from a velocity explosion at times close to the target, see \cite{soft2021}. 

This motivated us to establish a new physics-inspired model based on the \emph{damped wave equation}, also known as the \emph{telegrapher's equation}.
This hyperbolic PDE models the evolution of a physical wave which gets dampened over time and  travels with a finite fixed speed. It has long been known in the physics community that (mathematical) diffusion can be seen as the limit of the damped wave model with \emph{infinite} damping and \emph{infinite} propagation speed, and it has been found \emph{unphysical} that particles in diffusion can travel with unbounded velocities, contradicting Einstein's principles of relativity. In contrast, our newly proposed \emph{telegrapher's generative model} exhibits a favorable regularity with \emph{bounded} velocities and a suitable long-term behavior. For sampling, we are able to utilize the flexible framework of flow matching, 
but 
applied to a stochastic process intrinsically connected to the telegrapher's equation, namely the \emph{Kac process} \cite{KAC1974}, see Figure \ref{kac_walk_vs_brownian_end}.

\begin{figure}[ht!] \label{kac_walk_vs_brownian_end}
    \centering
    \includegraphics[width = 0.18\textwidth]{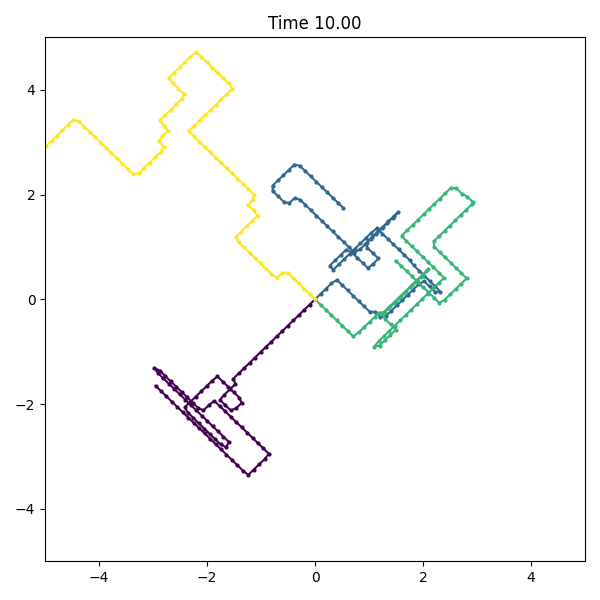}
    \includegraphics[width = 0.18\textwidth]{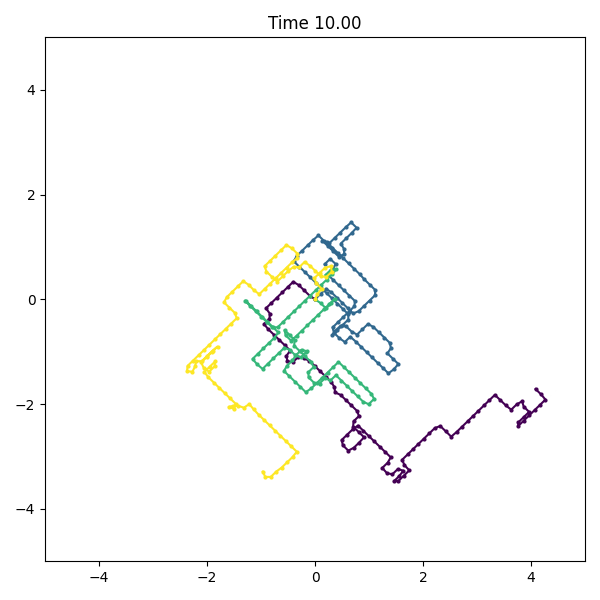}
    \includegraphics[width = 0.18\textwidth]{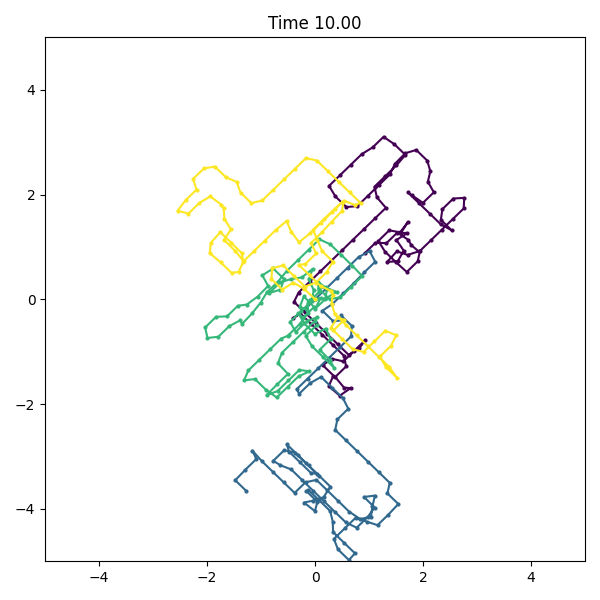}
    \includegraphics[width = 0.18\textwidth]{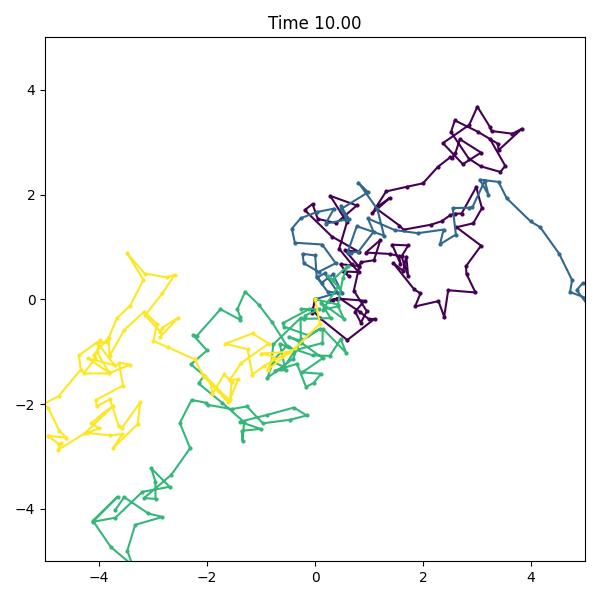}
    \includegraphics[width = 0.18\textwidth]{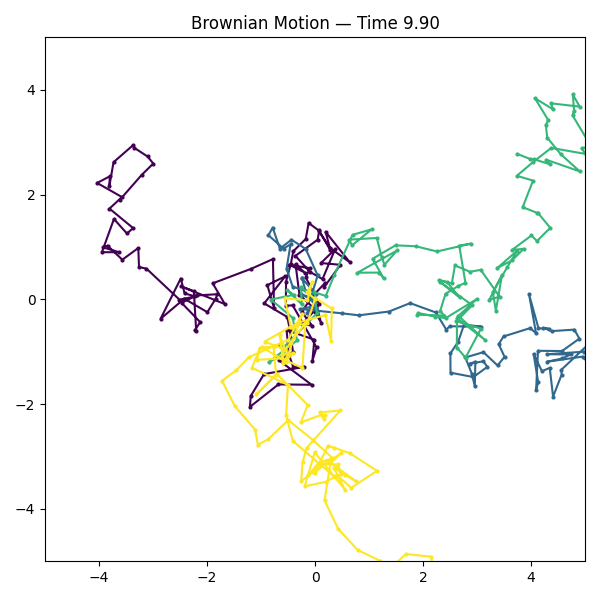}       
    \caption{Paths of the componentwise Kac walk in 2D, simulated until time $T=10$ with damping/velocity parameters $(a,c) = (1,1), (2,1 ), (4,2), (25,5)$, and a standard Brownian motion (right).}
\end{figure}

\paragraph{Contributions} 
\begin{enumerate}
     \item We show that the Kac probability flow is a Lipschitz (hence absolutely) continuous curve in the Wasserstein space. In particular it fulfills the continuity equation with a bounded velocity field for which we determine a corresponding \emph{conditional} velocity field for 1D flows starting in Dirac measures \emph{analytically} (Theorem \ref{velo_dirac}).
     \item We revisit the connection to the telegrapher's equation and work out that the Kac flow converges to a diffusion flow for suitably related large damping and velocity coefficients, and also for large times (Theorem \ref{convergence-heat}).
     \item We extend the above regularity and convergence results to a multi-dimensional stochastic process which consists of independent 1D Kac processes 
     in each spatial component (Proposition \ref{prop:lipschitz}, Theorem \ref{convergence-multi-d}). Moreover, we show that its conditional velocity field decomposes into univariate components (Theorem \ref{thm14}), and therefore obtain an \emph{explicit} expression.
     \item We use the \emph{decomposed and explicitly given} conditional vector field to train a neural network to approximate the true velocity field using ideas from flow matching and an efficient sampling strategy for the Kac process. This decomposition method is generally applicable to other componentwise forward processes and velocity fields.
     \item  We implement our generative model and demonstrate its generation quality and scalability by numerical examples.  In particular, we demonstrate its advantage over diffusion models in specific toy experiments.
\end{enumerate}

\paragraph{Outline of the paper.}
Section \ref{sec:prelim} provides necessary preliminaries on curves in Wasserstein spaces.
In Section \ref{sec:motivation} we explore the explosions of velocity fields of several popular neural flows which initially motivated us to study \emph{different} (physics-inspired) flows.
In Section \ref{sec:kac1d} we examine the 1D Kac model, its relation to the telegrapher's equation, and derive explicit formulas for the probability flow and conditional velocity field.
A generalization to higher dimensions is provided in 
Section \ref{sec:kac_multi}. Here we recall Kac's insertion method connecting the 1D Kac process to the multi-dimensional telegrapher's equation. Since the latter lacks a conservation of mass, we study a componentwise model in Subsection \ref{subsec:kac_multi_2}. An explicit representation of the velocity field of the multi-dimensional process is derived in Section \ref{sec:velo}.
In Section \ref{sec:numerics}, we use flow matching to approximate the corresponding velocity field and conduct numerical experiments.

\section{Preliminaries: Curves in Wasserstein Spaces}\label{sec:prelim}
Let $\mathcal P_2(\R^d)$ denote the complete metric space of probability measures with finite second moments
equipped with the Wasserstein distance  
$$W_2^2(\mu,\nu) := \min_{\pi \in \Pi(\mu,\nu)} \int_{\R^d \times \R^d} \|x-y\|^2 \d \pi(x,y),
$$ 
where  $\Pi(\mu,\nu)$ denotes the set of all probability measures on $\R^d \times \R^d$ having marginals $\mu$ and $\nu$, called couplings or transport plans between $\mu$ and $\nu$.
The push-forward measure of $\mu \in \mathcal P_2(\R^d)$ by a measurable map $T:\R^d \to \R^d$ is defined by
$T_\sharp \mu := \mu \circ T^{-1}$.
Let $I$ be a bounded interval in $\R$.
The space $AC^2(I;\mathcal P_2(\R^d))$
of \emph{absolutely continuous} curves consists of all $\mu_t: I \to \mathcal P_2(\R^d)$ for which there exists $m \in L_2(I)$ such that 
\begin{equation} \label{eq:abs_cont}
W_2(\mu_s,\mu_t) 
\leq \int_s^t m(r) \, \d r \quad \text{for all }  s\leq t, ~ s,t \in I.
\end{equation}
Here $L_2(I)$ denotes the space of (equivalence classes of) square-integrable real-valued functions on $I$.
It is well known that a narrowly\footnote{In stochastic analysis, narrow convergence is also known as \emph{weak convergence of measures}.} continuous curve $\mu_t:I \to \mathcal P_2(\R^d)$ 
belongs to $AC^2(I;\mathcal P_2(\R^d))$,
iff there exists a Borel measurable vector field $v: I\times \R^d \to \R^d$ such that 
$(\mu_t,v_t)$ satisfies the \emph{continuity equation}
\begin{align}\label{eq:ce}
\partial_t \mu_t + 
\nabla_x \cdot (\mu_t v_t)=0
\end{align}
in the sense of distributions and
\begin{equation}\label{eq:explode_1}
\|v_t\|_{L_2(\mu_t)} \in L_2(I),
\end{equation}
see \cite[Definition 1.1.1; Theorem 8.3.1]{BookAmGiSa05}.
If in addition
\begin{align} \label{eq:explode_2}
\int_I ~\, \sup_{x \in B} \|v_t(x)\|+\mathrm{Lip}(v_t,B)\d t<\infty \quad \text{for all compact } B\subset \R^d,
\end{align}
then the \emph{flow ODE} 
\begin{align}\label{eq:flow_ode} 
\partial_t \varphi(t,x)=v_t(\varphi(t,x)), \quad \varphi(0,x)=x,
\end{align}
has a solution
$\varphi:I\times\R^d\to \R^d$ 
and 
$\mu_t = \varphi(t,\cdot)_\sharp\mu_0$,
see
\cite[Proposition 8.1.8]{BookAmGiSa05}.
Note that often, absolutely continuous functions in $AC^1(I;\mathcal P_2(\R^d))$ with $m \in L_1(I)$ in \eqref{eq:abs_cont} were considered in the literature.
However, we need the stronger assumption of square integrability to define appropriate loss functions later.

Given an appropriate  velocity field $v_t$, we can sample from a target
distribution by starting from a sample $x_0$ from the latent distribution and applying an ODE solver to \eqref{eq:flow_ode}.
However, in this paper, we will reverse the flow direction by starting in the target distribution and ending (approximately) in the latent distribution at some final time $t=T$, similar to diffusion models. If we have access to the forward velocity $v_t$, then the reverse flow is simply given by the opposite velocity field $-v_{T-t}$.

\emph{Notational conventions.}
We frequently use the convention $f_t \coloneqq f(t,\cdot)$ for functions $f(t,x)$ depending on time and space.
Given some measure $\mu_t$ we denote its density (if it exists) by $p_t$, and we are occasionally imprecise when writing $p_t = \mu_t$.
If the measure $\mu_t$ (possibly) possesses singular parts, we denote the density, which then only exists as a \emph{generalized function} (distribution), by $u_t$.

\section{Motivation} \label{sec:motivation}
Our research was motivated by the explosion of velocity fields of certain flow models. 
If condition \eqref{eq:explode_2} or even worse the
condition \eqref{eq:explode_1} is not fulfilled, we speak about an \emph{exploding velocity field}. 
In the following, we give four examples of popular flows used in generative modeling.

\paragraph{1. Flows from couplings.}
Learning absolutely continuous curves induced by couplings
of the latent and target measure is a task addressed in \emph{flow matching},
see \cite{liu2023flow}, and for an overview, \cite{WS2025}.
Let us consider the latent $\mu_0= \mathcal{N}(0,I_d)$ and the target distribution $\mu_1 = \delta_0$, and 
their independent coupling $\alpha := \mu_0\times \mu_1$. 
Then the curve $\mu_t := e_{t,\sharp}\alpha$ with
$e_t = (1-t) x + ty$
admits  the density 
\begin{align} \label{eq:pt}
p_t(x) &= \left(2\pi (1-t)^2 \right)^{-\frac{d}{2}} {\rm e}^{-\frac{\|x\|^2}{2(1-t)^2}}, \quad t \in [0,1), \quad {p}_0 = \mathcal{N}(0,I_d),
\end{align}
and with the velocity field induced by $\alpha$, see \cite[Example 4.12]{WS2025},
\begin{align}\label{velo-ind}
  v_t(x)  &= \nabla \Big( \frac{1-t}{t}\log p_t(x) + \frac{1}{2t} \|x\|^2 \Big) 
  = -\frac{x}{1-t},
\end{align}
it fulfills
the continuity equation \eqref{eq:ce}. 
Unfortunately, \eqref{eq:explode_2} is \emph{not} fulfilled due to the singularity at $t=1$.
Nevertheless, the trajectory $\varphi_t$ of the ODE \eqref{eq:flow_ode}  is  given by $$\varphi_t(x) = (1-t)x,$$ 
and therefore it holds $v_t(\varphi_t(x)) = -x$. Hence, we have a constant velocity field along the curve, and by switching to spherical coordinates we see that
$$
\|v_t\|_{L_2(p_t)}^2 = \int_{\R^d} \|{v}_t({\varphi}_t(x))\|^2 \d {p}_0(x)
= \int_{\R^d} \|x\|^2 \d {p}_0(x)= d,
$$
such that \eqref{eq:explode_1} is still valid.

\paragraph{2. Diffusion flows.}
\emph{Diffusion models} or \emph{score-based models} are based on the Fokker-Planck equation of the Wiener process or Ornstein-Uhlenbeck process: the diffusion equation (with drift), see e.g.\ \cite{song2021scorebasedgenerativemodelingstochastic}.
Let us consider the probability flow $p_t$ solving the standard diffusion equation 
\begin{equation}\label{standard-diffusion}
    \partial_t p_t = \frac{\sigma^2}{2} \, \nabla \cdot (p_t \nabla \log p_t ) =  \frac{\sigma^2}{2} \, \Delta p_t, \quad t \in (0,1], \quad \lim_{t \downarrow 0} p_t = \delta_0,
\end{equation}
starting in the target $\delta_0$ given by a Dirac distribution at 0. Here, the limit for $t \downarrow 0$ is taken in the sense of distributions and $\frac{\sigma^2}{2}$ is the diffusion constant.
The solution is analytically given by
\begin{equation}
    p_t(x) = (2 \pi \sigma^2 t)^{-\frac{d}{2}} {\rm e}^{-\frac{\|x\|^2}{2 \sigma^2 t}}, \quad t \in (0,1].
\end{equation}
We choose $\sigma^2 = 1$ 
such that the latent is $p_1 = \mathcal{N}(0,I_d)$. 
The velocity field is $v_t(x) = \frac{x}{2t}$.
After the reparameterization $\tilde{p}_t = p_{1-t}$, the reverse flow reads as
\begin{equation}
    \tilde{p}_t(x) = \big( 2\pi (1-t) \big)^{-\frac{d}{2}} {\rm e}^{-\frac{\|x\|^2}{2(1-t)}}, \quad t \in [0,1), \quad \tilde{p}_0 = \mathcal{N}(0,I_d),
\end{equation}
and satisfies $\tilde{p}_1 = \delta_0$ in a distributional sense. 
The reverse velocity field is given by
\begin{equation} \label{velo2}
\tilde{v}_t(x) = -\frac{x}{2(1-t)},
\end{equation}
and the reverse trajectory $\tilde{\varphi}_t$ solving the ODE \eqref{eq:flow_ode} 
by
$$\tilde{\varphi}_t(x) = x \sqrt{1-t}.$$
Here it holds 
$\tilde{v}_t(\tilde{\varphi}_t(x)) = - \frac{x}{2 \sqrt{1-t}}$.
Thus, again by switching to spherical coordinates, we see that
\begin{align}\label{diffusion-local-explosion}
    \|\tilde{v}_t\|_{L_2(\tilde{p}_t)}^2
    &=  \int_{\R^d} \|\tilde{v}_t(\tilde{\varphi}_t(x))\|^2 \d \tilde{p}_0(x)
     =  \frac{d}  {4(1-t)},
\end{align} 
so that both conditions \eqref{eq:explode_1} and \eqref{eq:explode_2} are violated, and $\tilde p_t \notin AC^2([0,1);\mathcal P_2(\R))$.

In practice, instability issues concerning the learning of $\tilde{v}_t$ are caused by this explosion at times close to the target, and need to be avoided by e.g.\ \emph{time truncations}, see e.g.\ \cite{soft2021}.
For a heuristic analysis that also includes drift-diffusion flows, we refer to \cite{P2022}.

\paragraph{3. Flows from linear interpolation of Boltzmann energies.}
For Boltzmann densities 
$p_t = {\rm  e}^{-f_t}/Z_t$, 
where $Z_t \coloneqq \int {\rm  e}^{-f_t} \d x$,
straightforward computation gives the Rao-Fisher differential equation
\begin{align} \label{fr1}
 \partial_t p_t 
&=- \left(\partial_t f_t  - \E_{p_t}[\partial_t f_t] \right) \, p_t.
\end{align}
Since on the other hand, the continuity equation becomes
\begin{equation} 
\partial_t p_t = -\nabla\cdot(p_t v_t) = 
- \left( -\langle \nabla f_t, v_t \rangle + \nabla\cdot v_t \right) p_t,
\end{equation}
the vector field has to fulfill
\begin{equation} \label{hi} 
\partial_t f_t  - \E_{p_t}[\partial_t f_t] = -\langle \nabla f_t, v_t \rangle + \nabla\cdot v_t.
\end{equation}  
    Sampling from unnormalized target Boltzmann densities  $p_1$  
    often uses the geometric density interpolation  
    $p_t = p_0^{1-t} p_{1}^t$ 
    which corresponds to the \emph{linear interpolation} 
    $
    f_t = (1-t) f_0 + t f_{1}      
    $. Then \eqref{hi} reads as
      \begin{align}  \label{heat_pde_f_constant}
 f_1 - f_0 - Z_t  +\langle \nabla f_t,v_t\rangle - \nabla\cdot v_t = 0,
\end{align}
where $Z_t$ is the normalization constant.
There are a variety of approaches for approximating the vector field corresponding to the linear interpolation, see e.g.\ \cite{MM2024,nüsken2024steintransportbayesianinference} for kernel-based methods.  However, for certain asymmetric constellations of prior and target measure, a mode collapse of the neural model was numerically observed in \cite{máté2023learning}. 
    Indeed, it was analytically shown in \cite{CWDS2025}   that for the Laplacian $p_0 \propto e^{-|x|}$ centered at $0$ and a Laplacian-like target distribution $p_1 \propto e^{-2\min\{|x|, |x-m|\}}$ with a mode at $0$ and a second one at $m > 0$, the velocity norm grows exponentially with $m$, i.e.\ $\|v_1\|_{L_2(p_1)}^2 \gtrsim  e^{\alpha m}$. 
    Using Gaussian mixtures instead of Laplacians, an explosion of the type $\int_0^1 \|v_t\|_{L_2(p_t)}^2 \d t \gtrsim m^4 e^{\alpha m^2}$ was proven in \cite[Proposition 9]{GTC2025}.
    
    Note that by replacing the linear interpolation with a more flexible variant including an additional learnable term, the mode collapse was numerically avoided in \cite{máté2023learning}.
    For a control-theoretic perspective on \eqref{hi} that leverages the explosion effect, we refer to \cite{DVK2022,BBRN2025, sun2024dynamicalmeasuretransportneural}.

\paragraph{4. Poisson flows.}
In \cite{Poisson2022}, another physics-inspired generative model  
was proposed using the Poisson equation on $\R^{d}$ augmented by an additional dimension $z$:
\begin{equation}\label{Poisson-eq}
    -\Delta \varphi (x,z) = p_{\rm data}(x) \delta_0(z), \quad x\in \R^d, z \in \R.
\end{equation}
Here, the target distribution $p_{\rm data}$ is interpreted as electrical charges on the $z=0$ hyperplane describing an electric field $\varphi$. The time-independent, but $z$-dependent velocity field $v(x,z) = -\nabla \varphi(x,z)$ 
then determines the particle trajectories through the $(d+1)$-dimensional space. This augmentation has been found crucial for the model to work properly, but has the following explosion of the velocity field as a consequence:
the Poisson velocity field $v(x,z)$ has the analytical expression
\begin{equation}\label{poisson-field}
    v(x,z) = \frac{1}{S_d} \int_{\R^{d+1}} \frac{(x,z) - (y, z')}{\|(x,z) - (y, z')\|^{d+1}} 
    p_{\rm data}(y) \delta_0(z') \d y \d z',
\end{equation}
where $S_d$ is the surface area of the $d$-sphere. For the uniform distribution $p_{\rm data}$ on  $[0,1]^d$, this results for any $x \in [0,1]^d$ and $z=0$ in
\begin{align}\label{poisson-explosion}
    \int_{\R^{d+1}} \Big \| \frac{(x,0) - (y, z')}{\|(x,0) - (y, z')\|^{d+1}} 
    p_{\rm data}(y) \delta_0(z')  \Big\| \d y \d z' 
    &=  \int_{[0,1]^d} \frac{1}{\|x - y\|^{d}}  \d y = \infty.
\end{align}
Hence, the Poisson field \eqref{poisson-field} might not be well-defined on the support of the target distribution, and can get arbitrarily large around it. Notice how the augmentation by $z$ causes this explosion \eqref{poisson-explosion}, while in the non-augmented case the corresponding integral stays finite.

Similarly to the diffusion case, the explosion \eqref{poisson-explosion} at heights $z$ close to the data ($z=0$) makes suitable \emph{truncations} necessary when approximating the velocity field by a neural network, see \cite[Ch.\ 3.2]{Poisson2022}.

In the next sections,
we introduce a physics-inspired generative model based on the damped wave equation which does not suffer from exploding velocity fields.

\section{Telegrapher's Equation and Kac Model in 1D} \label{sec:kac1d}
In \cite{GenPhys2023}, it was contemplated to consider other physics-inspired models next to above diffusion (parabolic) and Poisson (elliptic) equations for generative modeling. 
Here, we further pursue this idea and focus 
on a process related to the \emph{telegrapher's equation}\footnote{Historically, the equation \eqref{tele-eq-1D} originates from the modeling of voltage and current in electrical transmission lines (telegraphs).}, also known as \emph{damped wave equation}. This is a linear hyperbolic PDE of the form
\begin{align}\label{tele-eq-1D}
    \partial_{tt} u(t,x) + 2a \, \partial_{t} u(t,x) &= c^2 \partial_{xx} u(t,x), \quad x \in \R, ~ t > 0, \nonumber\\
    u(0,x) &= f_0(x), \\
    \partial_t u(0,x) &= 0.
\end{align}  
Here, $a>0$ is the damping coefficient, and $c>0$ describes the velocity of the wave front. 
It is known that if the damping $a$ and velocity $c$ go suitably to infinity, see Remark \ref{a-c-to-infinity}, then the diffusion model is retrieved.
Hence, diffusion can be seen as \emph{``an infinitely $a$-damped wave with infinite propagation speed $c$''}. The idea of particles traveling with infinite speed has already encountered resistance in the physics community \cite{C1958, V1958, C1963, MW1996, TL2016}, since it violates Einstein's laws of relativity. 
But also in the field of generative modeling, this can be seen as a reason for the explosion  of velocity fields of diffusion models.
Hence, if we restrict ourselves to above model \eqref{tele-eq-1D}, where waves travel only with \emph{finite} speed $c$, we expect the velocity field $v_t$ to behave well. 
Indeed, we will show that the telegrapher's model does not admit a velocity explosion, but shares the same long-time asymptotics as diffusion. 

Of central importance for the theoretical analysis and for generative modeling is the stochastic representation of \eqref{tele-eq-1D} given by  Kac \cite{KAC1974} which we recall next. 

\subsection{Kac Flow in 1D}
Suppose that a particle starts at $x_0 = 0$ and always moves with velocity $c > 0$ in either positive or negative direction. The initial direction is determined by a random symmetric coin-flip. Each step is of duration $\Delta t$ and therefore we have $\Delta x = c \Delta t$. 
The particle continues to move in the same direction with probability $1- a\Delta t$, and reverses with the probability $a\Delta t$, where $a>0$ is a fixed parameter. 
Intuitively, for small $\Delta t$ the particle tends to move in the same direction (resembling some form of inertia in the movement), until it collides with some obstacle and reverses its direction. 

For $n \in \mathbb N$, we consider a random variable $S_n$ taking the possible displacement of a particle after  $n$ steps as values. Let us first consider the case that the initial direction is positive, so that $S_0 = c \Delta t$, where we consider this as step 0.
Then we have
\begin{align}\label{displacement-discrete_0}
    S_n = c \Delta t (1 + \varepsilon_1 + \varepsilon_1 \varepsilon_2 + \ldots + \varepsilon_1 \ldots \varepsilon_n) ,
\end{align}
where $\varepsilon_k$ are random variables taking the value
$-1$ with probability $a \Delta t$ and $1$ with probability $1- a \Delta t$, or alternatively
\begin{align}\label{displacement-discrete}
S_n = c \Delta t \sum_{k=0}^n (-1)^{N_k},
\end{align}
where $N_0 = 0$ and 
$N_k \sim B(k,a\Delta t)$, $k \in \mathbb N\setminus \{0\}$, are the binomially distributed random variables denoting the number of direction changes within step 1 to $k$. 
We consider the continuous limit of this discrete process 
as $n \to \infty, \, \Delta t \to 0$ such that $n a \Delta t \to at$. Then it is well known that $N_n$ 
converges (in distribution) to the Poisson distribution ${\rm Poi}(at)$ with expectation value $at$. 
The number of reversals up to time $t$ is given by
a homogeneous \emph{Poisson point process} $\left( N(t) \right)_{t\ge 0}$ with rate $a$, i.e.
\begin{itemize}
    \item[i)] $N(0) = 0$;
    \item[ii)] The increments of $N(t)$ are independent, i.e. for every $m \in \N$ and all $0 \le t_0 < t_1< \ldots < t_m$ the random variables
$N(t_1) - N(t_0)$, $N(t_2) - N(t_1)$, \ldots, $N(t_m) - N(t_{m-1})$
are independent;
    \item[iii)] $N(t) - N(s) \sim {\rm Poi}\big(a(t-s) \big)$ for all $0 \le s < t$.
\end{itemize}
Then the corresponding particle displacement and continuous analogue of \eqref{displacement-discrete} is given by
\begin{align}\label{stepwise-linear}
    S(t) = c \tau_t \quad \text{with} \quad 
    \tau_t := \int_0^t (-1)^{N(s)} \d s.
\end{align}
We refer to $\tau_t$ as \emph{random time}.
Finally, for a symmetric random initial direction, the displacement at time $t$ is determined by the \emph{Kac process starting in} $0$,
\begin{equation} \label{eq:kac}
K(t) := {\rm B}_{\frac{1}{2}} S(t) = {\rm B}_{\frac{1}{2}} c \,\tau_t,
\end{equation}
where ${\rm B}_{\frac{1}{2}} \sim {\rm Ber}(\frac12)$ is a Bernoulli random variable
taking the values $\pm 1$.
Naturally, for a random variable $X_0$ we call 
\begin{equation} \label{eq:kac_1}
X_t := X_0 + K(t) = X_0 + {\rm B}_{\frac{1}{2}} S(t)
\end{equation} 
the \emph{Kac process starting in} $X_0$.

\subsection{Relation of Kac' Flow to 1D Telegrapher's Equation}
Similar to the case of Brownian motions and diffusion equations, there is a Feynman-Kac type relation between the Kac process and the tele\-grapher's equation in 1D. Let $H^2(\R)$ denote the space
of two times weakly differentiable functions with derivatives in $L_2(\R)$.

\begin{theorem}\label{insertion-1D}
  For any initial function $f_0 \in H^2(\R)$, the function
\begin{equation}\label{tele-sol-1D}
    u(t,x) = \frac{1}{2}  \ex \Big[f_0 \big(x + S(t) \big) \Big] + \frac12 \ex \Big[f_0 \big(x - S(t) \big) \Big]  
\end{equation}
solves the 1D telegrapher's equation \eqref{tele-eq-1D}.

Further, if $f_0$ is a probability density of a random variable $X_0$ independent of $S(t)$, 
then \eqref{tele-sol-1D} is the probability density of the Kac process $(X_t)_{t \ge 0}$ starting in $X_0 \sim f_0$ defined by \eqref{eq:kac_1}.
\end{theorem}

\begin{proof}
The first part of claim was originally conjectured by  Kac \cite{KAC1974} and proven in \cite{K1993} using martingales.
For the second part, note that the Kac process starting in $X_0 \sim f_0$ with initial positive direction is given by $X_0 + S(t)$ and with initial negative direction by $X_0 - S(t)$. Then we obtain with the usual push-forward $P_{S(t)} := S(t) _\sharp \mathbb P$ that
    \begin{align} 
       \ex \Big[f_0 \big(x - S(t) \big) \Big]  
       &= \int_\Omega f_0(x - S(t)(\omega)  \d \mathbb{P}(\omega) 
       = \int_{S(t)(\Omega)} f_0(x - y ) \d P_{S(t)}(y)\\
       &= (f_0 \ast P_{S(t)})(x)  
       = p_{X_0 + S(t)}(x),
    \end{align}
    where we have used the independence of $X_0$ and $S(t)$ in the last equality.
    Similarly, we get
     \begin{align}
       \ex \Big[f_0 \big(x + S(t) \big) \Big]  
       &= (f_0 \ast P_{-S(t)})(x)  = p_{X_0 - S(t)}(x).
    \end{align} 
    Hence, it follows 
    $u(t,x) = \frac{1}{2}(p_{X_0 + S(t)}(x) + p_{X_0 - S(t)}(x)) = p_{X_0 + {\rm B}_{\frac{1}{2}} S(t)}(x)$ by the law of total probability.
\end{proof}
By the next lemma, there is an analytic representation of the distributional solution of the telegrapher's equation \eqref{tele-eq-1D}. We refer to \cite{MW1996}, and to \cite{N2020} for the proof via distributions. For a characterization of  measures as (tempered) distributions, see also \cite[Chapter 4.4]{PlPoStTa23}.

\begin{lemma}\label{explicit-representation}
The telegrapher's equation \eqref{tele-eq-1D} starting in a generalized function (distribution) $f_0$ has the distributional solution
   \begin{align}\label{explicit-formula-1d}
       u(t,x) &= \frac12 e^{-at} \Big(
       f_0(x+ct) + f_0(x-ct)  + \beta ct \int_{x-ct}^{x+ct}  \frac{I_0'(\beta r_t(x-y) )}{r_t(x-y)} f_0(y) \d y \notag\\
       &\quad + \beta \int_{x-ct}^{x+ct} I_0(\beta r_t(x-y)) \, f_0(y)\d y
       \Big),
   \end{align}
    where 
    $$
    r_t(x) \coloneqq \sqrt{c^2t^2-x^2}, \quad \beta \coloneqq \frac{a}{c},
    $$
    and $I_k$ denotes the $k$-th \emph{modified Bessel function of first kind}. {\rm (}Note that $I_0' = I_1$.{\rm )}  \\   
    In particular, for the Dirac distribution $f_0 = \delta_0$ in zero, it holds 
   \begin{align}\label{telegraph-formula-1d-dirac}
       u(t,x) &= \frac12 e^{-at} \big( \delta_0(x+ct) + \delta_0(x-ct)  \big) + \tilde u(t,x),
   \end{align}
    where 
   \begin{align} \label{telegraph-formula-1d-dirac-cont}
     \tilde u(t,x) &\coloneqq \frac12 e^{-at} \Big(\beta ct \frac{I_0'(\beta r_t(x))}{r_t(x)}  
     + \beta I_0(\beta r_t(x))
     \Big) {1}_{[-ct,ct]}(x).
   \end{align}
   This distribution is associated to a  probability measure $\mu_t$ with an atomic part given by the Dirac measures $\delta_{-ct}, \delta_{ct}$ and an absolutely continuous part $\tilde u(t,\cdot)$, i.e.
   $$
   \mu_t (A) =  \frac{e^{-at}}{2}\left( \delta_{-ct}(A) + \delta_{ct}(A) \right) 
   +  \int_{A} \tilde u(t,x) \d x
   \quad \text{for all Borel sets } A \subset \R.
   $$
  The measure $\mu_t$ is exactly the probability distribution of the Kac process $K(t)$ starting in $0$ from \eqref{eq:kac}, see \cite[Section 2]{J1990}.
\end{lemma}
Note that the Kac process $K(t)$ starting in $0$ does not admit a density itself.\\
The following remark states an important relation with the left- and right-moving Kac processes.

\begin{remark}
By \cite[Section 2]{J1990}, the absolute continuous part of the distribution of the \emph{initially right-moving} and \emph{initially left-moving} Kac process $\pm S(t)$ starting in $0$ is given by
    \begin{align}
     \tilde u_{\pm}(t,x)
     &= \frac12 e^{-at} \Big( \frac{\beta (ct \pm x)}{r_t(x)}\,I_{0}'(\beta r_t(x)) + \beta \,I_{0}(\beta r_t(x)) \Big) {1}_{[-ct,ct]}(x),
\end{align}
respectively, and it holds $\tilde u = \frac12 (\tilde u_+ + \tilde u_-)$
with $\tilde u$ in \eqref{telegraph-formula-1d-dirac-cont}.
By \cite[Eq.\ (12) \& (11)]{KAC1974}, the marginal densities $\tilde u_{\pm}$ satisfy the PDEs
\begin{align}\label{source-sinks}
  \partial_{t} \tilde u_{+} + c\,\partial_{x} \tilde u_{+} = -a\, \tilde u_{+}+a\, \tilde u_{-},
\quad
\partial_{t} \tilde u_{-}-c\,\partial_{x} \tilde u_{-}= a\,\tilde u_{+}-a\,\tilde u_{-}.  
\end{align}
Intuitively, the equations \eqref{source-sinks} can be understood as continuity equations for the \emph{currently right-moving} and \emph{currently left-moving particles} of $K(t)$ including source and sink terms, see \cite{OH2000}.
Adding the equations  yields
\begin{align*}
\partial_{t}(\tilde u_{+}+\tilde u_{-})
+\,c\,\partial_{x} (\tilde u_{+}
-\,\tilde u_{-})
& =0.
\end{align*}
Thus, with the \emph{continuous flux}
\begin{align}\label{flux-formula}
  \tilde J(t,x)
  &\coloneqq \frac{c}{2}\left(\tilde u_{+}(t,x) - \tilde u_{-}(t,x)\right)
  = \frac12 e^{-at} \,\frac{\beta c x}{r_t(x)}\,I_{0}'(\beta r_t(x)), \quad x \in (-ct,ct),
\end{align}
we arrive at the continuity equation
\begin{equation}\label{flux-continuity-equation}
\partial_{t}\tilde u+\partial_{x}\tilde J=0.
\end{equation}
\hfill $\diamond$
\end{remark}

After these preparations, we can explicitly describe the velocity field of the Kac process starting in a Dirac point which will be important for learning velocity fields later. The Kac velocity in \eqref{velo_1d} may be compared with those of coupling and diffusion flows given in \eqref{velo-ind}
and \eqref{velo2}, respectively.

\begin{theorem} \label{velo_dirac}
   Let $f_0 =  \delta_0$ be the Dirac measure at $0$ in $\R$. Then $\mu_t$ associated
   to  $u(t,\cdot)$ in \eqref{telegraph-formula-1d-dirac} solves the continuity equation   \begin{equation}\label{continuity_eq_dirac}
     \partial_t \mu_t = - \partial_x (\mu_t v_t)  
   \end{equation}
   in a weak sense, where the velocity field $v_t(x) = v(t,x)$ is given by
   \begin{align} \label{velo_1d}
       v(t,x) &\coloneqq 
       \left\{
       \begin{array}{cl}
           \frac{x}{
           t +  \frac{r_t(x)}{c} \frac{I_0(\beta r_t(x))}{ I_0'(\beta r_t(x))} }
           & \text{if} \quad x \in (-ct, ct),\\
           \; \; \, c &\text{if} \quad x=ct, \\
           -c & \text{if} \quad x=-ct,\\
            \; \; \, \text{arbitrary} & \text{otherwise}.
       \end{array}
       \right.
   \end{align}   
  \end{theorem}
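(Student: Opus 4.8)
The plan is to verify the continuity equation \eqref{continuity_eq_dirac} directly, using the decomposition of $\mu_t$ into its atomic part $\tfrac12 e^{-at}(\delta_{-ct} + \delta_{ct})$ and its absolutely continuous part $\tilde u(t,\cdot)$, and matching it against the known flux identity \eqref{flux-continuity-equation}, namely $\partial_t \tilde u + \partial_x \tilde J = 0$. The key observation is that on the open interval $(-ct,ct)$ the candidate velocity must satisfy $\mu_t v_t = \tilde J(t,\cdot)$, i.e. $v(t,x) = \tilde J(t,x)/\tilde u(t,x)$ wherever $\tilde u(t,x) \neq 0$. So the first step is to compute this ratio explicitly from \eqref{telegraph-formula-1d-dirac} and \eqref{flux-formula}: writing $r = r_t(x)$, $\beta = a/c$, we have
\begin{equation}
\frac{\tilde J(t,x)}{\tilde u(t,x)} = \frac{\tfrac{\beta c x}{r} I_0'(\beta r)}{\beta c t \tfrac{I_0'(\beta r)}{r} + \beta I_0(\beta r)} = \frac{x}{t + \tfrac{r}{c}\tfrac{I_0(\beta r)}{I_0'(\beta r)}},
\end{equation}
which is exactly the expression in \eqref{velo_1d}; a short argument is needed that the denominator is nonzero (both $I_0$ and $I_0'$ are positive on $(0,\infty)$, and one checks the limiting behaviour as $r\to 0$, i.e. $x\to\pm ct$). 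This already shows that on $(-ct,ct)$ the pair $(\tilde u(t,\cdot), v_t)$ reproduces $\partial_t \tilde u + \partial_x(\tilde u v_t) = 0$ by \eqref{flux-continuity-equation}.

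The second step is to handle the atomic part. For a test function $\phi \in C_c^\infty(\R)$, testing \eqref{continuity_eq_dirac} requires showing
\begin{equation}
\frac{\d}{\d t} \int_\R \phi \, \d\mu_t = \int_\R v_t \, \partial_x \phi \, \d\mu_t = \int_{-ct}^{ct} \tilde J(t,x)\, \partial_x\phi(x)\, \d x + \tfrac12 e^{-at}\bigl(c\,\phi'(ct) - c\,\phi'(-ct)\bigr).
\end{equation}
On the left-hand side, $\int \phi\,\d\mu_t = \tfrac12 e^{-at}(\phi(ct)+\phi(-ct)) + \int_{-ct}^{ct}\phi(x)\tilde u(t,x)\,\d x$; differentiating the atomic part in $t$ produces $-a\cdot(\text{atomic term}) + \tfrac12 e^{-at}(c\phi'(ct) - c\phi'(-ct))$, and differentiating the integral (Leibniz rule, accounting for the moving endpoints $\pm ct$ where $\tilde u$ has an integrable singularity) produces boundary contributions plus $\int_{-ct}^{ct}\phi\,\partial_t\tilde u$. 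The endpoint/singularity bookkeeping at $x = \pm ct$ is the delicate part: one must check that the boundary terms coming from the Leibniz rule on the a.c. part precisely cancel the $-a\cdot(\text{atomic})$ term, using the asymptotics $I_0(z) = 1 + O(z^2)$, $I_0'(z) = z/2 + O(z^3)$ as $z\to 0$ so that $\tilde u(t,x) \sim \tfrac12 e^{-at}\beta c t/r_t(x)$ near the endpoints (integrable, square-root singularity) while $\tilde J(t,x) \to 0$ there. Then an integration by parts on $\int_{-ct}^{ct}\tilde J\,\partial_x\phi$, using \eqref{flux-continuity-equation} to replace $\partial_x\tilde J = -\partial_t\tilde u$, closes the identity.

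I expect the main obstacle to be precisely this endpoint analysis: making the integration by parts and the Leibniz differentiation rigorous at the moving, singular boundary $x=\pm ct$, and verifying that no spurious boundary terms survive (equivalently, that $\tilde J(t,x)\phi(x)\to 0$ as $x\to\pm ct$, which follows since $x\,I_0'(\beta r)/r = x\,(\beta/2 + O(r^2))\to \pm c t \cdot \beta/2$ is bounded but multiplied against... — actually here one needs $\tilde J \to \tfrac12 e^{-at}\tfrac{\beta c (\pm ct)}{r}I_0'(\beta r)$, and since $I_0'(\beta r)/r \to \beta/2$, the flux $\tilde J$ stays bounded, so $[\tilde J \phi]$ at the endpoints is finite and must be tracked carefully). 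An alternative, cleaner route avoiding some of this: test \eqref{continuity_eq_dirac} against $\phi$, split $\R$ as the union of $\{|x|<ct\}$ and $\{|x|=ct\}$, use the already-known distributional identity \eqref{flux-continuity-equation} on the open part as a black box, and only verify by hand the jump/atom matching at $x=\pm ct$ via the Rankine–Hugoniot-type condition $v(t,\pm ct)=\pm c$ (the characteristic speed of the telegrapher's equation) together with the exponential decay rate $e^{-at}$ of the atoms. Either way the calculation is routine once the endpoint behaviour is pinned down; I would present the ratio computation and the test-function identity, and defer the asymptotic endpoint estimates to a short lemma.
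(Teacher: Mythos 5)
Your interior computation is fine and coincides with the paper's: on $(-ct,ct)$ the ratio $\tilde J/\tilde u$ built from \eqref{telegraph-formula-1d-dirac} and \eqref{flux-formula} gives exactly \eqref{velo_1d}, and \eqref{flux-continuity-equation} then yields the continuity equation away from the atoms. Note, though, that your route (direct verification that the \emph{given} field satisfies \eqref{continuity_eq_dirac} weakly) differs from the paper's, which first invokes the existence of \emph{some} velocity field via Proposition \ref{prop:lipschitz}, then derives $v=\tilde J/\tilde u$ on the interior using the symmetry of $\tilde u$ and antisymmetry of $\tilde J$, and finally pins down $v(t,\pm ct)=\pm c$ by testing against a sequence $\phi_n$ that vanishes in the interior and has prescribed derivatives at $\pm ct$ — thereby avoiding any Leibniz differentiation over the moving boundary.

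The genuine gap is precisely the endpoint analysis that your route cannot avoid, and as written it is both incomplete and mis-stated. First, your asymptotics are inconsistent: from $I_0'(z)=z/2+O(z^3)$ one gets $I_0'(\beta r)/r\to\beta/2$, so $\tilde u(t,x)$ does \emph{not} behave like $\tfrac12 e^{-at}\beta ct/r_t(x)$ near $x=\pm ct$; it is bounded and tends to the finite value $\tfrac{e^{-at}}{2}\bigl(\tfrac{a}{c}+\tfrac{a^2t}{2c}\bigr)$, and likewise $\tilde J$ does not tend to $0$ there but to $\pm\tfrac{e^{-at}}{2}\tfrac{a^2t}{2}$. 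Second, the claimed cancellation ("the Leibniz boundary terms precisely cancel the $-a\cdot(\text{atomic})$ term") is not the correct balance: testing \eqref{continuity_eq_dirac} with $\phi\in C_c^\infty(\R)$, differentiating $\int\phi\,\d\mu_t$ and integrating $\int_{-ct}^{ct}\tilde J\,\phi'\,\d x$ by parts leaves a \emph{three}-term Rankine--Hugoniot-type identity at each endpoint,
\begin{equation}
  c\,\tilde u(t,\pm ct^{\mp}) \mp \tilde J(t,\pm ct^{\mp}) \;=\; \frac{a}{2}\,e^{-at},
\end{equation}
expressing that the mass lost by each decaying atom is injected into the absolutely continuous part at the moving front. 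With the correct limits above this identity does hold, so your plan can be completed — but this verification is exactly the decisive step of the theorem, and your proposal neither carries it out nor states the correct facts needed to do so; the role of the boundary values $v(t,\pm ct)=\pm c$ (which weight the atomic contribution $\tfrac{e^{-at}}{2}(c\phi'(ct)-c\phi'(-ct))$ on the right-hand side against the transport of the atoms on the left) also needs to be made explicit rather than appealed to as "the characteristic speed".
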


\begin{proof}   
   At this time, we will \emph{assume} the existence of a velocity field $v_t$ satisfying the continuity equation \eqref{continuity_eq_dirac} in a weak sense, and show that in this case $v_t$ must admit the form \eqref{velo_1d}. Later, in Proposition \ref{prop:lipschitz}, we generally and independently prove that $\mu_t$ is indeed absolutely continuous and admits a velocity field $v_t$.

   Now, given some velocity field $v_t$ satisfying \eqref{continuity_eq_dirac}, we consider $\tilde v_t \coloneqq \frac{1}{2} (v_t(x) - v_t(-x))$, which is antisymmetric by definition. Using that $v_t$ solves \eqref{continuity_eq_dirac} and the symmetry of $u(t,x)$ in $x$, it follows that also $\tilde v_t$ solves \eqref{continuity_eq_dirac}. Hence, in the following, we can assume the existence of some \emph{antisymmetric} velocity field $v_t$ solving the continuity equation \eqref{continuity_eq_dirac}.
   
   Fix $t > 0$. Formal differentiation of \eqref{telegraph-formula-1d-dirac} leads to
   \begin{align}\label{partial-t}
       \partial_t u(t,x) =
       &-a u(t,x) + 
       \frac12 e^{-at} \big( c\delta_0'(x+ct) - c\delta_0'(x-ct) \\
       &+ (a +a^2 t ) z_1(t,x) + at z_2(t,x) 
       \big) {1}_{[-ct,ct]}(x),
   \end{align}
   where 
   \begin{align}\label{z1_z2}
       z_1(t,x) \coloneqq \frac{ I_0'(\beta r_t(x))}{r_t(x)}
       \quad \text{and} \quad
       z_2(t,x) \coloneqq \frac{act \, I_0''(\beta r_t(x) ) - c^2t \, z_1(t,x)}{r_t(x)^2}.
   \end{align}
   First, we consider the case $x \in (-ct,ct)$.
   Then, all the Dirac terms disappear and we only need to work with $\tilde u$ given in \eqref{telegraph-formula-1d-dirac-cont}, and $\partial_t \tilde u$ is given in \eqref{partial-t} without the Dirac terms.
   Using l' Hospital's rule and 
   $I_0'(0) = I_0^{(3)}(0)= 0$, 
   $I_0''(0) = \frac{1}{2}$,  $ I_0^{(4)}(0) = \frac{3}{8}$,
   it is easy to check that both functions $z_1, z_2$ are bounded in $x \in (-ct, ct)$, so that $\partial_t \tilde u(t,\cdot)$ is integrable.
   By our above arguments, the velocity field $v(t,x)$ can be assumed to be antisymmetric in $x$. Together with the symmetry of $\tilde u(t,x)$ in $x$, we conclude by \eqref{continuity_eq_dirac} that
   \begin{equation}\label{one}
       2 \tilde u(t,x) v(t,x) = - \int_{-x}^x \partial_t \tilde u(t,y) \d y \quad \text{ for all } x \in (-ct, ct).
   \end{equation}
At the same time, we obtain from \eqref{flux-continuity-equation} and by the anti-symmetry of $\tilde J(t,\cdot)$ that
     \begin{equation}
       - \int_{-x}^x \partial_t \tilde u(t,y) \d y 
       = \int_{-x}^x \partial_x \tilde J(t,y) \d y 
       = 2 \tilde J(t,x)
       \quad \text{ for all } x \in (-ct, ct).
   \end{equation}
Together with \eqref{one},  \eqref{flux-formula} and \eqref{telegraph-formula-1d-dirac-cont}, this implies
   \begin{equation}
      v(t,x)
    =\frac{\tilde J(t,x)}{\tilde u(t,x)}
    = \frac{e^{-at}\,\frac{\beta c x}{r_t(x)}I_{0}'(\beta r_t(x))}
           {e^{-at}\!\bigl[\frac{\beta ct}{r_t(x)}I_{0}'(\beta r_t(x))+\beta I_{0}(\beta r_t(x))\bigr]}
    =\frac{x}{t + \frac{r_t(x)}{c} \, \frac{I_{0}(\beta r_t(x))}{I_{0}'(\beta r_t(x))}},
   \end{equation} 
   which shows \eqref{velo_1d} for all $x \in (-ct,ct)$.
   
   Next, we consider the case $x = \pm ct$. For any test function $\phi \in C_c^\infty(\R)$, we get
\begin{align}
    \int_{-\infty}^\infty \partial_t u(t,x) \phi(x) \d x
    &= - \frac{ae^{-at}}{2} (\phi(-ct) + \phi(ct)) + \frac{e^{-at}}{2} (-c \phi'(-ct) + c \phi'(ct)) \\
    & \quad +\int_{-ct}^{ct} \partial_t \tilde u (t,x) \phi(x) \d x.
\end{align}
On the other hand, the continuity equation \eqref{continuity_eq_dirac} gives
\begin{align}
    \int_{-\infty}^\infty \partial_t u(t,x) \phi(x) \d x
    &= \int_{-\infty}^\infty - \partial_x (u(t,x) v(t,x)) \phi(x) \d x
    = \int_{-\infty}^\infty u(t,x) v(t,x) \phi'(x) \d x \\
    &= \frac{e^{-at}}{2} (v(t, -ct) \phi'(-ct) + v(t, ct) \phi'(ct) ) + 
    \int_{-ct}^{ct} \tilde u(t,x) v(t,x) \phi'(x) \d x.
\end{align}
Now we can choose a sequence of test functions $\phi_n \in C_c^\infty(\R)$ such that $\phi_n \equiv 0$ on $[-ct + \frac{1}{n}, ct - \frac{1}{n}]$, $\|\phi_n\|_\infty < \frac{1}{n}$, $\phi_n'(ct) = 1$ and $\phi_n'(-ct) = -1$. Then the anti-symmetry $v(t, -ct) = -v(t, ct)$ and above identities in the limit $n \to \infty$ yield
\begin{align}
  \frac{e^{-at}}{2}  (-c \cdot (-1) + c \cdot 1) = \frac{e^{-at}}{2} (-v(t, ct) \cdot (-1) + v(t,ct) \cdot 1),
\end{align}
and hence, $v(t, ct) = c$. By anti-symmetry, it holds $v(t, -ct) = -c$, and the proof of \eqref{velo_1d} is finished. 
\end{proof}

\section{Multi-dimensional Generalizations of the \texorpdfstring{Telegrapher's \\Model}{Telegrapher's Model}} \label{sec:kac_multi}
The  connection between the telegrapher's equation and the Kac process can be generalized to higher dimensions via Kac's insertion method, which we briefly
describe in Subsection \ref{subsec:kac_multi_1}. In contrast to the one-dimensional case, initial probability densities do not remain probability densities in the PDE formulation. Yet, we state a convergence result towards diffusion, which we will use subsequently.
To circumvent the lack of mass conservation, we propose two types of componentwise extensions in Subsection \ref{subsec:kac_multi_2}, and we prove regularity and convergence results for our componentwise processes.

\subsection{Kac's Insertion Method}\label{subsec:kac_multi_1}
By inserting the random time \eqref{stepwise-linear} into the undamped wave solution, Theorem \ref{insertion-1D} can be generalized to multiple dimensions.

\begin{theorem}\label{insertion-multi-d}
For any initial function $f_0 \in H^2(\R^d)$, $d \ge 1$, let  $w_c(t,x)$ be
the solution of the \emph{undamped} wave equation with velocity $c > 0$ given by
\begin{align}\label{undamped-wave}
    \partial_{tt} w(t,x) &= c^2 \Delta w(t,x), \quad x \in \R^d, ~ t > 0,\nonumber\\
    w(0,x) &= f_0(x),\\
    \partial_t w(0,x) &= 0.
\end{align}
Then, the function $u: [0,\infty) \times \R^d \to \R$ defined by
\begin{equation}\label{tele-sol-general}
    u(t,x) := \ex \big[ w_c(\tau_t, x)\big]
\end{equation}
solves the multi-dimensional damped wave equation 
\begin{align}\label{damped-wave}
    \partial_{tt} u(t,x) + 2a \, \partial_{t} u(t,x) &= c^2 \Delta u(t,x), \quad x \in \R^d, ~ t > 0,\nonumber\\
    u(0,x) &= f_0(x), \\
    \partial_t u(0,x) &= 0.
\end{align} 
\end{theorem}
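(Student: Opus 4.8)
The plan is to verify directly that $u(t,x) := \ex[w_c(\tau_t,x)]$ satisfies the damped wave equation by differentiating under the expectation and exploiting the structure of the random time $\tau_t = \int_0^t (-1)^{N(s)}\,\d s$. First I would record the two elementary properties of $\tau_t$ that drive the computation: it is almost surely $C^1$ in $t$ with $\dot\tau_t = (-1)^{N(t)}$, hence $|\dot\tau_t| = 1$ and $\dot\tau_t^2 = 1$; and it is almost surely $C^2$ away from the (countably many, a.s.\ isolated) jump times of the Poisson process $N$, with $\ddot\tau_t = 0$ there. Using the chain rule on $w_c(\tau_t,x)$ (recall $w_c$ solves the undamped wave equation, so $\partial_{ss}w_c = c^2\Delta w_c$, and its initial data give $\partial_s w_c(0,\cdot)=0$), one gets pathwise
\begin{align}
\partial_t\, w_c(\tau_t,x) &= (-1)^{N(t)}\,\partial_s w_c(\tau_t,x),\\
\partial_{tt}\, w_c(\tau_t,x) &= \partial_{ss} w_c(\tau_t,x) = c^2 \Delta w_c(\tau_t,x)
\end{align}
off the jump times, where in the second line the term with $\ddot\tau_t$ vanishes and $\dot\tau_t^2=1$ was used. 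Taking expectations, the right-hand side already produces $c^2\Delta u(t,x)$; the missing $2a\,\partial_t u$ term must come from the expectation of the jumps.

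The key step is therefore to compute $\E[(-1)^{N(t)}\,\partial_s w_c(\tau_t,x)]$ and to track how the Poisson jumps contribute when one differentiates it in $t$ a second time — equivalently, one derives an ODE system for the pair of expectations
\begin{align}
u(t,x) = \E[w_c(\tau_t,x)], \qquad g(t,x) := \E\big[(-1)^{N(t)}\,\partial_s w_c(\tau_t,x)\big],
\end{align}
so that $\partial_t u = g$. Conditioning on whether a reversal of $N$ occurs in $[t,t+h]$ (probability $ah + o(h)$) and otherwise evolving deterministically, one obtains $\partial_t g = c^2\Delta u - 2a\,g$ in the distributional sense: the $-2a g$ coming from the sign flip $(-1)^{N(t)}\mapsto -(-1)^{N(t)}$ at rate $a$, i.e.\ from the generator of the Poisson process acting on the factor $(-1)^{N(t)}$. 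Combining $\partial_t u = g$ and $\partial_t g = c^2\Delta u - 2ag$ gives $\partial_{tt}u + 2a\,\partial_t u = c^2\Delta u$. The initial conditions are immediate: $\tau_0 = 0$ gives $u(0,x)=w_c(0,x)=f_0(x)$, and $\partial_t u(0,x) = g(0,x) = \partial_s w_c(0,x) = 0$ by the initial data of the undamped wave equation.

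I expect the main obstacle to be the rigorous justification of interchanging differentiation (in $t$) with the expectation, and of the ``condition on the next jump'' step, given that the integrand $w_c(\tau_t,x)$ is only piecewise $C^2$ in $t$ and that one is working at the level of distributions in $x$ (where $f_0 \in H^2(\R^d)$ guarantees $\Delta w_c$ makes sense but pointwise manipulations do not). The clean way to handle this is to test against $\varphi \in C_c^\infty(\R^d)$, reducing everything to the scalar processes $t \mapsto \langle w_c(\tau_t,\cdot),\varphi\rangle$ and $t\mapsto \langle \partial_s w_c(\tau_t,\cdot),\varphi\rangle$, which are genuinely $C^1$ between jumps with a.s.\ locally bounded derivatives, so that dominated convergence and the infinitesimal-generator argument for the compound process $(\tau_t, N(t))$ apply; alternatively one can cite that Theorem \ref{insertion-1D} is the $d=1$ case of exactly this statement and that the martingale proof in \cite{K1993} carries over verbatim with $\partial_{xx}$ replaced by $\Delta$. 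I would present the generator/renewal argument as the main line and remark that the $d=1$ specialization recovers Theorem \ref{insertion-1D} via $w_c(t,x) = \tfrac12 f_0(x+ct) + \tfrac12 f_0(x-ct)$, which indeed gives $\E[w_c(\tau_t,x)] = \tfrac12\E[f_0(x+c\tau_t)] + \tfrac12\E[f_0(x-c\tau_t)] = \tfrac12\E[f_0(x+S(t))]+\tfrac12\E[f_0(x-S(t))]$ by symmetry of $S(t)=c\tau_t$.
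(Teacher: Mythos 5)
Your argument is essentially correct, but it is a genuinely different route from the paper: the paper gives no proof of Theorem \ref{insertion-multi-d} at all, stating only that the claim was conjectured by Kac and proven in \cite[Theorem 4]{GH1971} via semigroup theory (random evolutions), with the 1D case handled by the martingale proof of \cite{K1993}. You instead give a self-contained probabilistic derivation: exploiting that $((-1)^{N(t)},\tau_t)$ is Markov, you condition on the occurrence of a reversal in $[t,t+h]$ to derive the closed first-order system $\partial_t u = g$, $\partial_t g = c^2\Delta u - 2a\,g$ for $u=\E[w_c(\tau_t,\cdot)]$ and $g=\E[(-1)^{N(t)}\partial_s w_c(\tau_t,\cdot)]$, which immediately yields the telegrapher's equation and the initial conditions. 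This is in spirit the classical Goldstein--Kac argument (compare the coupled system \eqref{source-sinks} for $\tilde u_\pm$), made rigorous by working with the $L_2(\R^d)$-valued map $t\mapsto w_c(t,\cdot)$, which is $C^2$ for $f_0\in H^2(\R^d)$ since $w_c(t,\cdot)=C(ct)f_0$ with the cosine operator family; this is exactly the regularity needed for your Taylor expansion with uniform remainder, so your worry about pointwise manipulations is resolved without testing against $\varphi$. What your approach buys is an elementary, citation-free proof that also makes transparent where the damping term $2a\,\partial_t u$ comes from (the sign flip at rate $a$); what the paper's citation buys is brevity and the full operator-theoretic generality of \cite{GH1971}. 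Two small points you should make explicit: (i) $\tau_t$ can be negative, so you need $w_c$ defined for negative times -- harmless because with $\partial_t w_c(0,\cdot)=0$ the solution extends evenly in $t$ (the cosine family is even); (ii) your displayed pathwise identity $\partial_{tt}w_c(\tau_t,x)=c^2\Delta w_c(\tau_t,x)$ off jump times cannot be integrated against $\mathbb P$ directly (the jumps are exactly what produce the damping), but you recognize this and the two-function generator argument correctly replaces it, so this is a presentational rather than a mathematical gap.
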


This claim was again conjectured in \cite{KAC1974} and a proof was given in \cite[Theorem 4]{GH1971} using semigroup theory. \hfill $\diamond$

The following observation states a \emph{defect} of the multi-dimensional telegrapher's equation \eqref{damped-wave} and crucially motivates our componentwise approach proposed in the next Subsection \ref{subsec:kac_multi_2}.

\begin{remark}\label{drawback-no-density}
  In 1D, the solution of \eqref{undamped-wave} is given by $w_c(t,x) = \frac{1}{2} (f_0(x+ct) + f_0(x - ct) )$, so that \eqref{tele-sol-general} reproduces \eqref{tele-sol-1D}.
Unfortunately, in higher dimensions, the telegrapher's PDE solution \eqref{tele-sol-general} might not stay a probability density anymore.
For example, the mass $\int_{\R^3} u(t,x) \d x$ in 3D behaves like $1-e^{-t}$, see \cite[Ch.\ 2.2]{TL2016}. \hfill $\diamond$
\end{remark}

Nevertheless, we will see that using an analogous insertion method for the diffusion, it is possible to show that the diffusion and telegrapher's PDE solutions converge to each other.

Recall that a Wiener process (Brownian motion) 
$\mathbf{W} = \left(\mathbf{W}(t) \right)_{t\ge 0}$, $\mathbf{W}(t) \in \R^d$, $d \ge 1$, is characterized by i) $\mathbf{W}(0) = \mathbf{0}$,
ii) independent increments, iii) $\mathbf{W}(t) - \mathbf{W}(s) \sim \mathcal N(0,(t-s)I_d)$
for all $0 \le s < t$, and iv) $\mathbf{W}$ is almost surely continuous in $t$.
From now on, we emphasize vector-valued quantities using bold letters.

\begin{lemma}\label{insertion-diffusion}
For $f_0 \in H^2(\R^d)$,
let  $w$ be the solution of the undamped wave equation
\eqref{undamped-wave}, where now, $c = 1$ is fixed.
Let $W$ be a  one-dimensional Wiener process.
Then
\begin{align} \label{dichte_heat}
    h(t,x):= \mathbb{E}\left[w \left(\sigma W(t),x \right) \right], \quad \sigma >0,
\end{align}
solves the heat equation
\begin{align}\label{heat-eq}
    \partial_t h(t,x) &= \frac{\sigma^2}{2} \Delta h(t,x), \quad x \in \R^d, ~ t > 0,\\
    h(0,x) &= f_0(x).
\end{align}    
\end{lemma}

The statement can be found in \cite{J1990}, and a proof was given in \cite[Theorem 5]{GH1971}.\hfill $\diamond$

\noindent Note that if $f_0$ is a probability density and $\mathbf{X}_0 \sim f_0$,
then the solution $h$ given by \eqref{dichte_heat} is the probability density of a $d$-dimensional Wiener process $\sigma \mathbf{W}(t)$ starting in $\mathbf{X}_0$.

Interestingly the functions $u$ and $h$ obtained by the insertion methods \eqref{tele-sol-general} and \eqref{dichte_heat} converge to each other for $t \to \infty$. 
The proof of this fact crucially exploits the properties of the undamped wave solution \eqref{undamped-wave}. 

\begin{theorem} \label{convergence-heat}
Fix $c_0 > 0$. 
For $f_0 \in H^2(\R^d)$,
let $u$ be given by \eqref{tele-sol-general} with $a>0, \, c \ge c_0$, and $h$ by \eqref{dichte_heat} with $\sigma^2:= \frac{c^2}{a}$. 
Then there exists a constant $C = C(c_0, d)$ such that
\begin{align} \label{bound_convergence_heat}
    \|h(t, \cdot) - u(t,\cdot)\|_{L_2(\R^d)}\leq C \Big(\frac{1}{at}+\frac{1}{c(ta)^{\frac{1}{2}}}\Big) \quad \text{ for all } t > 0.
\end{align}
\end{theorem}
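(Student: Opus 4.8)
The natural strategy is to exploit the two stochastic-insertion representations simultaneously. By Theorem~\ref{insertion-multi-d} and the preceding lemma, both $u(t,\cdot)$ and $h(t,\cdot)$ are obtained by inserting a random time into the \emph{same} undamped wave solution $w = w_c$: namely $u(t,x) = \ex[w_c(\tau_t,x)]$ with the Kac random time $\tau_t = \int_0^t (-1)^{N(s)}\d s$, while $h(t,x) = \ex[w_c(\sigma W(t), x)]$ with $\sigma^2 = c^2/a$ (note that rescaling the $c=1$ lemma by $c$ turns $\sigma W(t)$ into $(\sigma/c) W(t)$ inside $w_c$, and $(\sigma/c)^2 = 1/a$, so effectively both insert a time-change of comparable scale). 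The plan is: (i) write the $L_2(\R^d)$ difference $\|h(t,\cdot)-u(t,\cdot)\|_{L_2}$ via Plancherel in terms of the Fourier multiplier $\hat w_c(s,\xi) = \cos(c|\xi| s)\hat f_0(\xi)$, so that
\begin{align}
\|h(t,\cdot)-u(t,\cdot)\|_{L_2}^2
= \int_{\R^d} \big|\hat f_0(\xi)\big|^2 \, \Big| \ex\big[\cos(c|\xi|\tau_t)\big] - \ex\big[\cos(c|\xi|\sigma W(t))\big]\Big|^2 \d\xi;
\end{align}
(ii) bound the inner characteristic-function difference by a quantity of the form $C\big(\tfrac{1}{at} + \tfrac{1}{c(ta)^{1/2}}\big)$ \emph{uniformly in $\xi$}, so that the $\xi$-integral is controlled by $\|f_0\|_{L_2}^2$ (absorbed, together with $d$, into $C$). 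Actually one must be slightly more careful: the bound on the inner difference should instead be split into a high-frequency and low-frequency regime so that the $|\xi|$-dependence integrates against $|\hat f_0(\xi)|^2$ — this is where the $H^2$ hypothesis enters, giving decay of $\hat f_0$.

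For step~(ii) the key probabilistic facts are: the Kac random time has $|\tau_t| \le t$ almost surely with $\ex[\tau_t^2] = \tfrac{t}{a} - \tfrac{1-e^{-2at}}{2a^2}$ (a standard computation from the Poisson process, or available in \cite{KAC1974,J1990}), while $\sigma W(t) \sim \mathcal N(0, \sigma^2 t)$ with the same variance $\sigma^2 t = c^2 t/a$ up to the lower-order correction. One then compares the two characteristic functions: the Gaussian one is exactly $e^{-c^2|\xi|^2 \sigma^2 t / 2}\cdot\,(\text{corr.})$, and the Kac one satisfies the telegrapher's ODE in $t$ (it is the Fourier transform of the 1D telegrapher's kernel), namely $\phi''(t) + 2a\phi'(t) + c^2|\xi|^2\phi(t) = 0$ with $\phi(0)=1,\phi'(0)=0$, whose explicit solution one can write down and expand. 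The cleanest route is probably: both $\ex[\cos(c|\xi|\tau_t)]$ and $\ex[\cos(c|\xi|\sigma W(t))]$ are $1 - \tfrac{c^2|\xi|^2}{2}\,\text{Var} + O(\cdots)$ for small $c|\xi|$, and both are $O(1/(at))$-small (after damping) for large $c|\xi|t$; matching a uniform envelope across regimes yields the stated two-term bound. Alternatively, and perhaps more in the spirit of \cite{J1990}, one couples $\tau_t$ and $\sigma W(t)$ on one probability space via a functional CLT / Donsker-type estimate for the Kac random time and bounds $\ex|\cos(c|\xi|\tau_t) - \cos(c|\xi|\sigma W(t))| \le c|\xi|\,\ex|\tau_t - \sigma W(t)|$ — but the factor $c|\xi|$ then again forces a frequency split.

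The main obstacle I anticipate is getting the \emph{two distinct} decay rates $\tfrac{1}{at}$ and $\tfrac{1}{c(ta)^{1/2}}$ to emerge with the \emph{right} dependence on $c$ (recall $c \ge c_0$ is only bounded below, so the constant $C$ may depend on $c_0$ but the bound must genuinely improve as $c\to\infty$). This means the frequency cutoff separating "small $c|\xi|$'' from "large $c|\xi|$'' has to be chosen as a function of $a$, $t$, $c$ in a way that balances the Taylor-remainder term (which grows with $|\xi|$) against the damped tail term (which is $e^{-at}$-type but only after $c|\xi|$ exceeds a threshold $\sim a/c$, cf.\ the $\beta = a/c$ appearing in Lemma~\ref{explicit-representation}). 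Tracking the constants through the modified-Bessel-function asymptotics underlying $\hat u(t,\xi)$ — equivalently, through the explicit solution of the characteristic ODE above — is the delicate bookkeeping; everything else (Plancherel, variance computations, $|\tau_t|\le t$) is routine. Since the result is attributed to \cite{J1990}, I would also check whether that reference's argument can be quoted for the hard estimate and only the $L_2$-Fourier packaging needs to be supplied here.
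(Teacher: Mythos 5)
Your proposal identifies the right structural picture (both $u$ and $h$ arise by inserting a random time into the undamped wave solution, so the $L_2$-difference reduces via Plancherel to a comparison of $\ex[\cos(c|\xi|\tau_t)]$ with a Gaussian characteristic function), but it does not actually prove the theorem: the entire quantitative content — the uniform-in-$\xi$ (or frequency-split) estimate producing the two decay terms $\frac{1}{at}$ and $\frac{1}{c(ta)^{1/2}}$ with the correct dependence on $a$ and $c$ — is left as a plan, and you yourself flag the bookkeeping through the explicit solution of $\phi''+2a\phi'+c^2|\xi|^2\phi=0$ as an anticipated obstacle rather than carrying it out. There is also a scaling slip: with $\sigma^2=c^2/a$ the heat multiplier is $e^{-\sigma^2|\xi|^2 t/2}=\ex[\cos(c|\xi|\,W(t)/\sqrt{a})]$, not $\ex[\cos(c|\xi|\,\sigma W(t))]=e^{-c^4|\xi|^2 t/(2a)}$ as in your displayed formula (your parenthetical about rescaling has it right, but the formula you would then estimate is the wrong one); and the claim that the $H^2$ hypothesis is needed for decay of $\hat f_0$ is off the mark — $H^2$ is only used to make sense of the wave/telegrapher solutions, and the final bound is insensitive to it.

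The paper's proof is much shorter and takes a different route: it verifies the single hypothesis of a quoted result, namely \cite[Theorem 2]{J1990}, which already contains exactly the bound \eqref{bound_convergence_heat}. Concretely, since $\Delta$ is self-adjoint with non-positive spectrum, the wave solution is $w(t,\cdot)=C(t)f_0$ for a cosine operator function with $\|C(t)\|_{L_2\to L_2}\le 1$ (by \cite[Corollary 2]{Lutz1977}), so $\|w(t,\cdot)\|_{L_2}\le\|f_0\|_{L_2}$ uniformly in $t$, and Janssen's theorem then yields the claim. Your closing remark — that one should check whether the reference's argument can be quoted for the hard estimate — is in fact the paper's entire strategy; your Fourier framework would even give the needed uniform boundedness trivially ($|\cos|\le 1$), but as written the hard estimate is neither proved nor properly delegated, so the proposal has a genuine gap.
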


\begin{proof}
It is well-known that the Laplacian $\Delta : H^2(\R^d) \subset L_2(\R^d) \to L_2(\R^d)$ is a self-adjoint linear operator with non-positive spectrum $\sigma(\Delta) = (-\infty, 0]$.
    By \cite[Corollary 2]{Lutz1977} the solution of \eqref{undamped-wave} with $c=1$ is given by $w(t, \cdot) = C(t) f_0$, where $(C(t))_{t \ge 0}$ is a uniformly bounded so-called cosine operator function with $\|C(t)\|_{L_2 \to L_2} \le 1$ for all $t \ge 0$. In particular, it follows that $\|w(t,\cdot) \|_{L_2(\R^d)} \le \|f_0\|_{L_2(\R^d)}$ is uniformly bounded in $t$. Now, \cite[Theorem 2]{J1990} yields the claim.
\end{proof}

\begin{remark}\label{a-c-to-infinity}
Combining Theorems \ref{insertion-multi-d}, \ref{convergence-heat} and Lemma \ref{insertion-diffusion}, we see that for any $t \ge 0$, the solution $u^{a,c}(t,\cdot)$ of the telegrapher's equation \eqref{damped-wave} converges to the solution $h(t,\cdot)$ of the diffusion equation \eqref{heat-eq} 
for $a,c \to \infty$ with fixed $\sigma^2 = \frac{c^2}{a}$.\,\hfill $\diamond$
\end{remark}

\subsection{Multi-Dimensional Kac Process} \label{subsec:kac_multi_2}
\begin{figure}[ht!] 
    \centering
    \includegraphics[width = 0.18\textwidth]{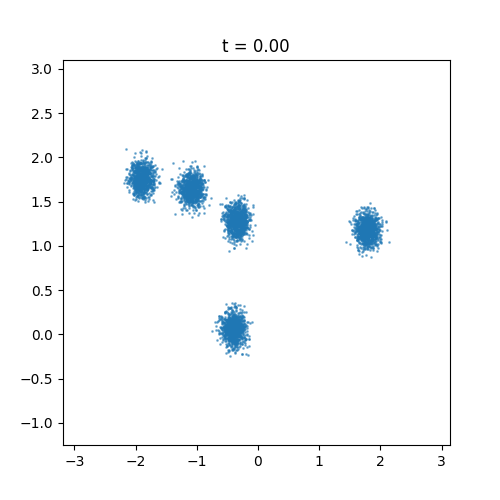}
    \includegraphics[width = 0.18\textwidth]{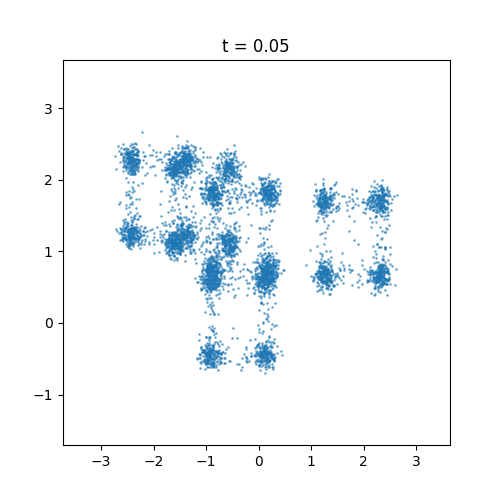}
    \includegraphics[width = 0.18\textwidth]{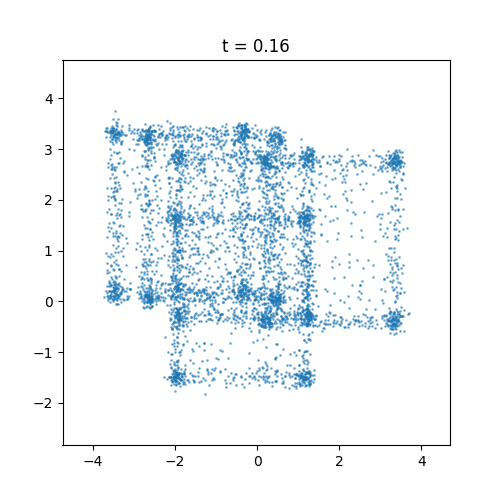}
    \includegraphics[width = 0.18\textwidth]{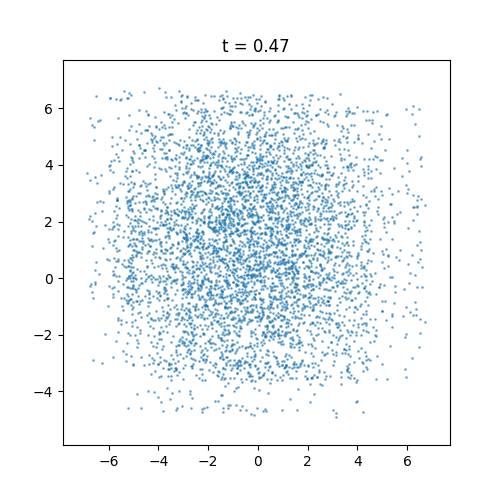}
    \includegraphics[width = 0.18\textwidth]{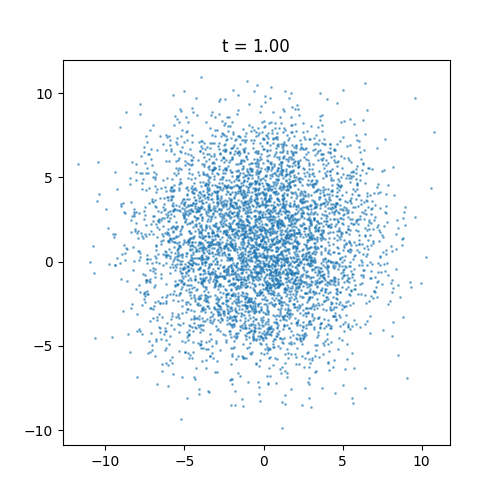}
    
    \includegraphics[width = 0.18\textwidth]{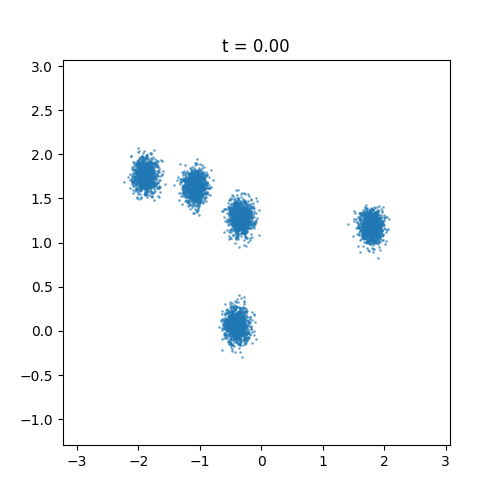}
    \includegraphics[width = 0.18\textwidth]{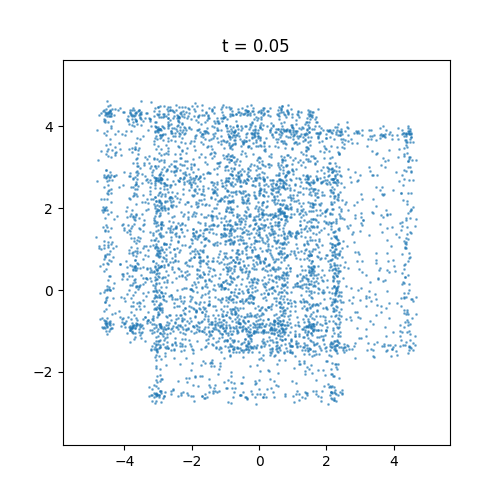}
    \includegraphics[width = 0.18\textwidth]{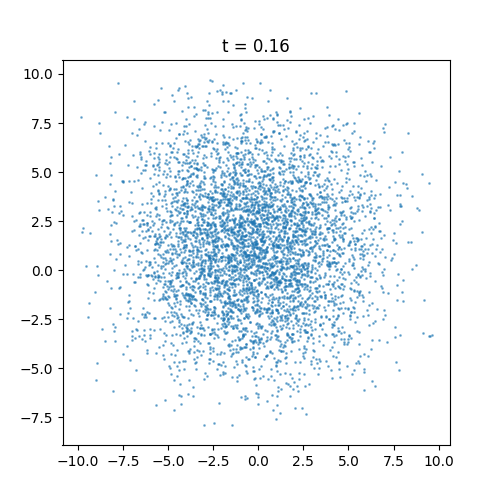}
    \includegraphics[width = 0.18\textwidth]{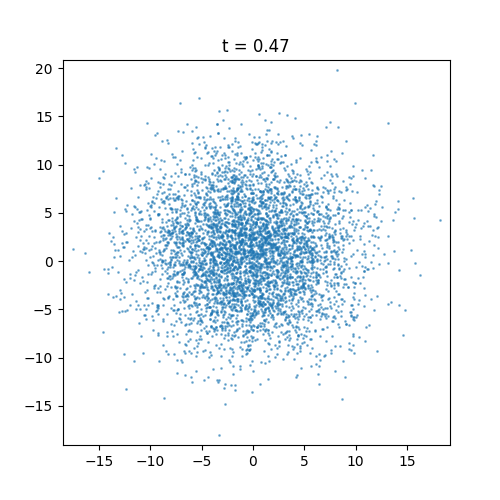}
    \includegraphics[width = 0.18\textwidth]{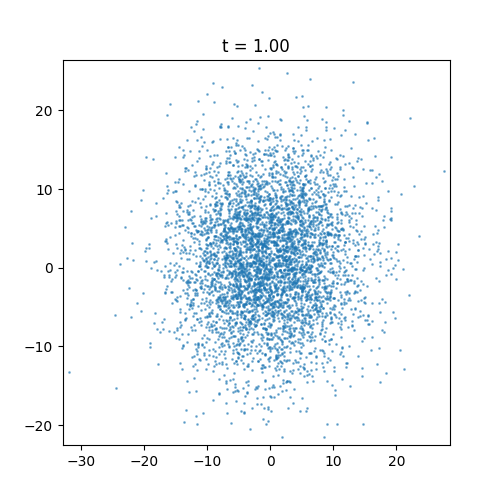}
    \caption{The distribution of the Kac process $\mathbf{X}_t$ starting in a 2D Gaussian mixture, simulated until time $T=10$ with $(a,c) =(1,1)$ (upper row) and $(a,c) = (5,5)$ (lower row). We observe a so-called \emph{ballistic-to-diffusive crossover} of the Kac process, see e.g.\ \cite{MW1996}.}
    \label{flow_low_ac}
\end{figure}
For generative modeling, we propose to circumvent the lack of mass conservation of the multi-dimensional telegrapher's equation \eqref{damped-wave}, see Remark \ref{drawback-no-density}, by using a multivariate random process which consists of independent 1D Kac processes in each spatial component -- in complete analogy to multi-dimensional Brownian motions. 

More precisely, we consider two types of componentwise processes, commonly referred to as \emph{variance-exploding} (VE) and \emph{variance-preserving} (VP). For both, we prove Lipschitz regularity in the Wasserstein-2 space in the Propositions \ref{prop:lipschitz} and \ref{prop:lipschitz_2}. 
Typically, in the VE case, the latent distribution is obtained by sampling the marginal distribution of the process in the long-time limit $t \to \infty$. Hence, mostly relevant for the VE case, we prove the convergence Theorem \ref{convergence-multi-d} based on Theorem \ref{convergence-heat}.

For a random variable $\mathbf{X}_0 \in \R^d$, $d \ge 1$, 
we consider i.i.d.\footnote{We use i.i.d.\ Kac processes just for simplicity. But our framework also allows for independent processes $K^i$ each with their own parameters $(a_i, c_i)$.}\ copies $K^i(t)$, $i = 1,...,d$, of a Kac process \eqref{eq:kac} starting in 
$\mathbf{0}$. Then, we call 
\begin{equation}\label{kac-process}
    \mathbf{K}(t) := (K^1(t), ..., K^d(t))
\end{equation}
a  \emph{$d$-dimensional Kac process starting in} $\mathbf{0} \in \R^d$. Corresponding to the VE case, we consider the respective \emph{Kac process starting in} $\mathbf{X}_0$, or \emph{VE Kac process}, given as
\begin{equation}\label{eq:VE-KAC}
    \mathbf{X}_t := \mathbf{X}_0 + \mathbf{K}(t), \quad t \in[0,\infty).
\end{equation}
For the VP case, we define the \emph{mean‐reverting} Kac process, or \emph{VP Kac process}, by
\begin{equation}\label{eq:VP-KAC}
  \mathbf{M}_t \coloneqq (1-t)\, \mathbf{X}_0 + \mathbf{K}(t),
  \quad t\in[0,1].
\end{equation}
The stepwise linear movement of the Kac process $(\mathbf{K}(t))_{t \ge 0}$ 
translates into a favorable regularity of both probability flows $(P_{\mathbf{X}_t})_t$ and $(P_{\mathbf{M}_t})_t$.
In particular, they are absolutely continuous in the Wasserstein space $\mathcal{P}_2(\R^d)$ and there is no local explosion of the velocity field as in \eqref{diffusion-local-explosion}. We first state the result for the VE Kac process \eqref{eq:VE-KAC}.

\begin{proposition}\label{prop:lipschitz}
   Let $P_{\mathbf{X}_0} \in \mathcal{P}_2(\R^d)$.
   Then, for the probability distribution flow  $(\mu_t)_{t\ge 0} \coloneqq (P_{\mathbf{X}_t})_{t\ge 0}$ of the $d$-dimensional Kac process $\mathbf{X}_t$ starting in $\mathbf{X}_0$,  the following holds true:
   \begin{itemize}
    \item[i)]
        $(\mu_t)_{t\ge 0}$ is Lipschitz continuous in the Wasserstein space $(\mathcal{P}_2(\R^d),W_2)$ with constant $d^{\frac{1}{2}} c$, and $\mathbf{X}_t$ is almost surely Lipschitz continuous in $\R^d$ with the same constant.
     \item[ii)] $(\mu_t)_{t\ge 0}$ is an absolutely continuous curve in $AC^2([0,T];\mathcal P_2(\R^d))$ for any $T>0$, and admits a velocity field fulfilling
     \begin{equation}\label{kac-boundedness}
    \|v_t\|_{L_2(\mu_t)}^2  \le d c^2 \quad \text{ for a.e. } t \in [0, \infty).
\end{equation}
    \item[iii)]
    The support of   $(\mu_t)_{t\ge 0}$ grows at most linearly in time.
   \end{itemize}
\end{proposition}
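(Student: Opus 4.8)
The plan is to push every estimate through the \emph{synchronous coupling}: for $0\le s\le t$, couple $P_{\mathbf{X}_s}$ and $P_{\mathbf{X}_t}$ by evaluating one and the same random path at the two times. The single elementary fact driving everything is that the random time $\tau_t=\int_0^t(-1)^{N(r)}\d r$ from \eqref{stepwise-linear} is $1$-Lipschitz in $t$, since its integrand has modulus $1$; hence $\tau_t-\tau_s=\int_s^t(-1)^{N(r)}\d r$ obeys $|\tau_t-\tau_s|\le t-s$ for \emph{every} realization. Writing the $i$-th component of the $d$-dimensional Kac process as $K^i(t)={\rm B}^i_{1/2}\,c\,\tau^i_t$ with i.i.d.\ copies $(\tau^i,{\rm B}^i_{1/2})$ of $(\tau,{\rm B}_{1/2})$ as in \eqref{eq:kac}, this yields $|K^i(t)-K^i(s)|=c\,|\tau^i_t-\tau^i_s|\le c\,(t-s)$ almost surely.

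\emph{Part i).} Summing over the $d$ components, $\|\mathbf{K}(t)-\mathbf{K}(s)\|^2=\sum_{i=1}^d|K^i(t)-K^i(s)|^2\le d\,c^2\,(t-s)^2$, and since $\mathbf{X}_t-\mathbf{X}_s=\mathbf{K}(t)-\mathbf{K}(s)$ the path $t\mapsto\mathbf{X}_t$ is almost surely Lipschitz with constant $d^{1/2}c$. Using $(\mathbf{X}_s,\mathbf{X}_t)_\sharp\mathbb P\in\Pi(P_{\mathbf{X}_s},P_{\mathbf{X}_t})$ as a competitor in the definition of $W_2$ gives $W_2(P_{\mathbf{X}_s},P_{\mathbf{X}_t})^2\le\E\|\mathbf{X}_t-\mathbf{X}_s\|^2\le d\,c^2\,(t-s)^2$, i.e.\ $W_2(P_{\mathbf{X}_s},P_{\mathbf{X}_t})\le d^{1/2}c\,(t-s)$.

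\emph{Part ii).} The bound of i) is exactly \eqref{eq:abs_cont} with the constant function $m\equiv d^{1/2}c\in L_2([0,T])$, and $W_2$-continuity implies narrow continuity; hence $(P_{\mathbf{X}_t})_{t\in[0,T]}\in AC^2([0,T];\mathcal P_2(\R^d))$. By \cite[Theorem 8.3.1]{BookAmGiSa05} there is a Borel velocity field $v_t$ with $(P_{\mathbf{X}_t},v_t)$ solving the continuity equation \eqref{eq:ce} and $\|v_t\|_{L_2(\mu_t)}\in L_2([0,T])$; choosing the minimal such field, its $L_2(\mu_t)$-norm coincides for a.e.\ $t$ with the metric derivative of the curve, which is bounded by the Lipschitz constant $d^{1/2}c$, and squaring gives \eqref{kac-boundedness}.

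\emph{Part iii).} From $|\tau^i_t|=\bigl|\int_0^t(-1)^{N^i(r)}\d r\bigr|\le t$ we get $|K^i(t)|\le ct$ for each $i$, so $\mathbf{K}(t)$ takes values in the cube $[-ct,ct]^d$ and therefore $\supp P_{\mathbf{X}_t}\subseteq\supp P_{\mathbf{X}_0}+[-ct,ct]^d$; in particular, if $\supp P_{\mathbf{X}_0}$ is bounded, $\mathrm{diam}(\supp P_{\mathbf{X}_t})\le\mathrm{diam}(\supp P_{\mathbf{X}_0})+2d^{1/2}c\,t$. The argument is otherwise routine bookkeeping; the only step demanding care is the passage in ii) from the pathwise Lipschitz estimate to the quantitative velocity bound \eqref{kac-boundedness}, which relies on identifying $\|v_t\|_{L_2(\mu_t)}$ for the minimal field with the metric derivative via \cite[Theorem 8.3.1]{BookAmGiSa05}. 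The conceptual content — and the reason the Kac flow behaves better than the diffusion flow — is simply that $\tau_t$ is $1$-Lipschitz, a property with no counterpart for Brownian paths.
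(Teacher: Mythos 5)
Your proposal is correct and follows essentially the same route as the paper: the pathwise bound $|\tau_t-\tau_s|\le|t-s|$ giving $\|\mathbf{X}_t-\mathbf{X}_s\|\le d^{1/2}c\,|t-s|$ a.s., the synchronous coupling $(\mathbf{X}_s,\mathbf{X}_t)_\sharp\mathbb P$ as a competitor for $W_2$, absolute continuity with constant $m\equiv d^{1/2}c$ plus \cite[Theorem 8.3.1]{BookAmGiSa05} (metric derivative bounded by the Lipschitz constant) for \eqref{kac-boundedness}, and the pathwise estimate again for the linear support growth, with your cube $\supp P_{\mathbf{X}_0}+[-ct,ct]^d$ in place of the paper's balls of radius $d^{1/2}ct$. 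Your part ii) in fact spells out the metric-derivative identification a bit more explicitly than the paper does.
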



\begin{proof}
i)  Fix $0 \le s < t$. By construction of $\mathbf{X}_t$ via 
    \eqref{stepwise-linear}, we obtain
    for $\mathbb{P}$-a.e. $\omega$ that
    \begin{align}\label{lipschitz-estimate}
        \|\mathbf{X}_t(\omega) - \mathbf{X}_s (\omega)\|^2
        &= \|\mathbf{K}(t) (\omega)- \mathbf{K}(s) (\omega)\|^2
        = \sum_{i=1}^d |K^i(t)(\omega) - K^i(s)(\omega)|^2 \nonumber\\
        &
        = \sum_{i=1}^d \big|c \,  {\rm B}_{\frac12}^i (\omega) \int_s^t (-1)^{N^i(z)(\omega)} \d z
       \big|^2 \nonumber\\
        &\le d \, c^2|t-s|^2.
    \end{align}
    This shows that $\mathbf{X}_t$ is almost surely Lipschitz continuous in $\R^d$ with constant $d^{\frac{1}{2}} c$.

    Now, we
    consider the canonical transport plan  $\pi : \R^d \times \R^d \to \R$ defined by
    $\pi(B) := 
    \mathbb{P} \big( (\mathbf{X}_s, \mathbf{X}_t) \in B \big)$
    for all Borel measurable $B \in \mathcal B(\R^d \times \R^d)$.
    Indeed, it holds $\pi(A \times \R^d) = P_{\mathbf{X}_s}(A)$ 
    and 
    $\pi(\R^d \times A) = P_{\mathbf{X}_t}(A)$ 
    for all $A \in  \mathcal B(\R^d)$.
    Furthermore, we have
    $\pi \in \mathcal{P}_2(\R^d \times \R^d)$:
     it holds that
    \begin{align}
       &\int_{\R^d \times \R^d} \|(x,y)\|^2 \d \pi(x,y) 
       = \int_{\Omega} \|(\mathbf{X}_0 + \mathbf{K}(s),\mathbf{X}_0 + \mathbf{K}(t))\|^2 \d \mathbb{P}\\
       &\le \int_{\Omega} 2 \|\mathbf{X}_0\|^2 + 2\|\mathbf{K}(s)\|^2 + 2 \|\mathbf{X}_0\|^2 + 2\|\mathbf{K}(t)\|^2\d \mathbb{P} 
       < \infty,
    \end{align}
    since $P_{\mathbf{X}_0} \in \mathcal{P}_2(\R^d)$, and by using \eqref{lipschitz-estimate} with $s=0$ or $t=0$, respectively.  
    Hence, $\pi$ is an admissible transport plan between $\mu_s$ and $\mu_t$.
    
    Then, we conclude, again using \eqref{lipschitz-estimate}, that
    \begin{equation}
        W_2^2(\mu_s, \mu_t) 
        \le 
        \int_{\R^d \times \R^d} \|x-y\|^2 \d \pi(x,y)
        = \int_{\Omega} \|\mathbf{X}_s - \mathbf{X}_t\|^2 \d \mathbb{P}
        \le d \, c^2|t-s|^2.
    \end{equation}
    Hence, $(\mu_t)_{t\ge 0}$ is Lipschitz continuous in $\mathcal{P}_2(\R^d)$ with constant $d^{\frac{1}{2}} c$.
    \\[1ex]
 ii) By definition \eqref{eq:abs_cont} and Part i), we see immediately that
 $(\mu_t)_{t\ge 0}$
 is an absolutely continuous curve.
 It then follows from \cite[Theorem 8.3.1]{BookAmGiSa05}
 that there exists an (optimal) velocity field in the
 continuity equation fulfilling
\eqref{kac-boundedness}.
 \\[1ex]
iii) 
 Let $\bar \Omega_0 \coloneqq \supp(P_{\mathbf{X}_0})$. 
 Consider the covering 
 $\bar \Omega_t \coloneqq \bigcup_{x \in \bar \Omega_0} \mathbb B_{d^\frac{1}{2} c t}(x)$ of $\bar \Omega_0$ with balls of radius $d^\frac{1}{2} c t$. By \eqref{lipschitz-estimate} it follows that $\mathbf{X}_t \in \bar \Omega_t$ $\mathbb{P}$-almost surely, yielding the claim $\supp(P_{\mathbf{X}_t}) \subseteq \bar \Omega_t$.
\end{proof}

Similar to the VE Kac process \eqref{eq:VE-KAC}, the VP Kac process \eqref{eq:VP-KAC} also admits a Lipschitz property in $\mathcal{P}_2$ and hence a bounded velocity norm $\|v_t\|_{L_2(\mu_t)}$. The statement and proof is given in Proposition \ref{prop:lipschitz_2} in the appendix.

By the next theorem  the convergence result
from Theorem \ref{convergence-heat} stated for the PDE solution \eqref{tele-sol-general} 
also holds true for the VE Kac process \eqref{eq:VE-KAC}. 

\begin{theorem}\label{convergence-multi-d}
Let $f_0$ be a probability density of a random variable $\mathbf{X}_0 = (X_0^1,...,X_0^d)$ with independent components such that $f_0 (x)= f_0(x^1, \ldots,x^d) = \prod_{i=1}^d f_0^i(x^i)$ and  
 $f_0^i \in H^2(\R)$, $i=1,\ldots,d$.
Let $\mathbf{X}_t$ be a $d$-dimensional Kac process starting in $\mathbf{X}_0$ with parameters $a>0$ and $c \ge c_0 >0$ and with  probability density $u_t$. 
Further, let $\sigma^2:= \frac{c^2}{a}$ be fixed and $\mathbf{X}_0 + \sigma \mathbf{W}(t)$ be a $d$-dimensional Wiener process starting in $\mathbf{X}_0$ with probability density $h_t$. 
Assume that $\mathbf{X}_0$ is independent of $\mathbf{K}(t)$ and $\sigma \mathbf{W}(t)$, respectively. 
Then it holds
\begin{align}
    \|h(t, \cdot)  - u(t,\cdot) \|_{L_2(\R^d)} 
    \le  C_{d} \left(\frac{1}{at^{1+\frac{d-1}{4}}}+\frac{1}{ca^{\frac{1}{2}} t^{\frac{1}{2}+\frac{d-1}{4}}}\right) + R_d(a,c,t),
\end{align}
where the constant $C_{d}$ depends on $c_0$ and $\sigma$, and the function $R_d(a,c,t)$ satisfies $\lim_{a,c \uparrow \infty} R_d(a,c,t) = 0$ for any $t$, and $R_d(a,c, \cdot) \in \mathcal{O}(t^{-(\frac{1}{2}+\frac{d}{4})})$ for any $a,c$. Hence, the density of the VE Kac process \eqref{eq:VE-KAC} $L_2$-converges to the density of the Wiener process with rate $\mathcal{O}(t^{-(\frac{1}{2}+\frac{d-1}{4})})$.
\end{theorem}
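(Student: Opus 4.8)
The plan is to exploit the product structure of both the Kac density $u(t,\cdot)$ and the Wiener density $h(t,\cdot)$. Since the components of $\mathbf{X}_0$ are independent and the $d$-dimensional Kac process is built from independent 1D Kac processes $K^i(t)$, the density $u(t,x) = \prod_{i=1}^d u^i(t,x^i)$ factorizes, where $u^i$ is the 1D solution from Lemma \ref{explicit-representation} convolved with $f_0^i$ (using Proposition \ref{lemma:tele-density}). Likewise, since $\mathbf{W}(t)$ has independent components, $h(t,x) = \prod_{i=1}^d h^i(t,x^i)$ with each $h^i$ the 1D heat-equation solution with diffusion constant $\sigma^2 = c^2/a$. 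So the problem reduces to controlling $\|\prod_i u^i - \prod_i h^i\|_{L_2(\R^d)}$ in terms of the one-dimensional discrepancies $\|u^i - h^i\|_{L_2(\R)}$.

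The main estimation step is a standard telescoping-product argument: write
\begin{align}
\prod_{i=1}^d u^i - \prod_{i=1}^d h^i
= \sum_{j=1}^d \Big(\prod_{i<j} h^i\Big)\, (u^j - h^j)\, \Big(\prod_{i>j} u^i\Big),
\end{align}
take $L_2(\R^d)$ norms, and use that the $L_2(\R^d)$ norm of a product of functions in separate variables is the product of the $L_2(\R)$ norms of the factors. This yields
\begin{align}
\|u(t,\cdot) - h(t,\cdot)\|_{L_2(\R^d)}
\le \sum_{j=1}^d \|u^j(t,\cdot) - h^j(t,\cdot)\|_{L_2(\R)} \prod_{i<j} \|h^i(t,\cdot)\|_{L_2(\R)} \prod_{i>j} \|u^i(t,\cdot)\|_{L_2(\R)}.
\end{align}
Now I would invoke Theorem \ref{convergence-heat} in dimension $1$ to bound each $\|u^j - h^j\|_{L_2(\R)}$ by $C(c_0)\big(\frac{1}{at} + \frac{1}{c(ta)^{1/2}}\big)$, and bound the trailing factors $\|h^i(t,\cdot)\|_{L_2(\R)}$ and $\|u^i(t,\cdot)\|_{L_2(\R)}$ by their explicit decay rates. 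Here is where the extra $t$-power in the exponent comes from: the $L_2$ norm of a 1D heat kernel (and, by the explicit formula \eqref{explicit-formula-1d} with $f_0 = \delta_0$ smoothed by $f_0^i$, of the Kac density) decays like $t^{-1/4}$ as $t \to \infty$, since $\|h^i(t,\cdot)\|_{L_2}^2 \sim (4\pi\sigma^2 t)^{-1/2}$. With $d-1$ such trailing factors one picks up $t^{-(d-1)/4}$, explaining the stated rate.

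The main obstacle will be the bookkeeping around the two distinct one-dimensional densities $u^i$ and $h^i$: one must verify that $\|u^i(t,\cdot)\|_{L_2(\R)}$ really does decay like $t^{-1/4}$ uniformly in the allowed parameter range $a>0$, $c \ge c_0$ — the atomic part $\frac{e^{-at}}{2}(\delta_{-ct}+\delta_{ct})$ of the pure-Dirac Kac solution has no $L_2$ density, but after convolution with $f_0^i \in H^2(\R) \subset L_2(\R) \cap L_\infty(\R)$ it contributes a bounded $L_2$ piece with exponentially small weight $e^{-at}$, and the absolutely continuous part $\tilde u(t,\cdot)$ must be shown to have the claimed decay; one can either estimate $\tilde u$ directly from \eqref{telegraph-formula-1d-dirac} using asymptotics of $I_0$, or more cleanly bound $\|u^i(t,\cdot)\|_{L_2} \le \|h^i(t,\cdot)\|_{L_2} + \|u^i(t,\cdot) - h^i(t,\cdot)\|_{L_2}$ and feed in the 1D version of Theorem \ref{convergence-heat} again, so that the trailing factors inherit the heat-kernel decay up to lower-order corrections. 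The remainder term $R_d(a,c,t)$ collects exactly these cross-terms (products of one-dimensional discrepancies with each other and with the $e^{-at}$ atomic contributions), and its two stated properties — vanishing as $a,c\to\infty$ for fixed $t$, and $\mathcal{O}(t^{-(1/2+d/4)})$ for fixed $a,c$ — follow by tracking which factors in the telescoping sum are bounded versus decaying. Finally I would combine the leading term, whose worst-decaying contribution is $\frac{1}{at}\cdot t^{-(d-1)/4} = \frac{1}{a\,t^{1+(d-1)/4}}$ (and similarly $\frac{1}{c a^{1/2} t^{1/2 + (d-1)/4}}$), to read off the advertised bound.
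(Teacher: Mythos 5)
Your proposal is correct and follows essentially the same route as the paper: factorize $u$ and $h$ into products of 1D densities, decompose the difference of products (your telescoping sum is just the unrolled form of the paper's induction), apply Minkowski, invoke the 1D bound of Theorem \ref{convergence-heat} for each factor $\|u^j-h^j\|_{L_2(\R)}$, bound $\|h^i\|_{L_2}$ by the heat-kernel decay via Young's inequality, and control $\|u^i\|_{L_2}$ by the triangle inequality $\|u^i\|_{L_2}\le\|h^i\|_{L_2}+\|u^i-h^i\|_{L_2}$, collecting cross terms into $R_d$. No substantive differences from the paper's argument.
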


\begin{proof}
First, by assumption, $(X_0^1,...,X_0^d, K^1(t),..., K^d(t))$ is (mutually) independent. Hence, $\mathbf{X}_t =  (X_0^1 + K^1(t),...,X_0^d + K^d(t))$ has independent components $X_0^i + K^i(t)$ that are one-dimensional Kac processes starting in $X_0^i \sim f_0^i$. By Theorem \ref{insertion-1D}, they admit a probability density $u^i$ solving the 1D telegrapher's equation \eqref{damped-wave} with initial functions $f_0^i \in H^2(\R)$, and hence, $\mathbf{X}_t$ admits the probability density $u(t,x) = \prod_{i=1}^d u^i(t,x^i)$ by the independence of the components. 

Analogously, we conclude that $\mathbf{X}_0 + \sigma \mathbf{W}(t)$ has the density $h(t,x) = \prod_{i=1}^d h^i(t,x^i)$,
where the marginal densities $h^i$ solve the 1D heat equation \eqref{heat-eq} with initial functions $f_0^i \in H^2(\R)$.
Hence, by Theorem \ref{convergence-heat} and Remark \ref{a-c-to-infinity}, the bound \eqref{bound_convergence_heat} holds componentwise with some $C_1=C_1(c_0) > 0$, i.e.
\begin{align}
    \|h^i(t,\cdot) - u^i(t,\cdot)\|_{L_2(\R^1)}
    \leq C_1 \left(\frac{1}{at}+\frac{1}{c(ta)^{\frac{1}{2}}}\right)    \quad \text{for all} \quad  i = 1,...,d .
\end{align}
Now we show the claim by induction on the dimension $d$. The above inequality implies the assertion for $d=1$. 
Let the claim be true for $d \ge 1$. 
Using for $a_i,b_i \in \R$ the identity 
\begin{align}
    \prod_{i=1}^{d+1} a_i - \prod_{i=1}^{d+1} b_i
    &= \big(\prod_{i=1}^{d} a_i  \big) (a_{d+1} - b_{d+1}) + b_{d+1}( \prod_{i=1}^{d} a_i -  \prod_{i=1}^{d} b_i) ,
\end{align}
it follows by Minkowski's inequality that
\begin{align}
    \|h(t, \cdot) - u(t,\cdot)\|_{L_2(\R^{d+1})} 
    &= \Big( \int_{\R^{d+1}} |  \prod_{i=1}^{d+1} h^i(t,x^i) - \prod_{i=1}^{d+1} u^i(t,x^i) |^2 \d x\Big)^{\frac{1}{2}}\\
    &\le \Big( \int_{\R^{d+1}} |  \big(\prod_{i=1}^{d} h^i(t,x^i)\big)(h^{d+1}(t,x^{d+1})-u^{d+1}(t,x^{d+1}))|^2 \d x\Big)^{\frac{1}{2}}\\
    &\quad + \Big( \int_{\R^{d+1}} | u^{d+1}(t,x^{d+1}) \big(\prod_{i=1}^{d} h^i(t,x^i) - \prod_{i=1}^{d} u^i(t,x^i)\big)|^2 \d x\Big)^{\frac{1}{2}}\\
    &= \Big(\prod_{i=1}^{d} \|h^i(t,x^i)\|_{L_2(\R^{1}) } \Big)
    \|h^{d+1}(t,x^{d+1})-u^{d+1}(t,x^{d+1}) \|_{L_2(\R^1)}\\
    &\quad + \| u^{d+1}(t,x^{d+1}) \|_{L_2(\R^1)} \Big( \int_{\R^{d}} |\big(\prod_{i=1}^{d} h^i(t,x^i) - \prod_{i=1}^{d} u^i(t,x^i)\big)|^2 \d x\Big)^{\frac{1}{2}}.
\end{align}
Note that with the one-dimensional heat kernel $K_\sigma(t,\cdot)$, 
Young's inequality for convolutions gives 
$$\|h^i(t,\cdot)\|_{L_2} 
= \|K_\sigma(t,\cdot) \ast f_0^i\|_{L_2} 
\le \|K_\sigma(t,\cdot)\|_{L_2} \|f_0^i\|_{L_1} = \|K_\sigma(t,\cdot)\|_{L_2} 
= \frac{1}{(4 \pi \sigma^2 t)^\frac{1}{4}}.
$$ Hence, the first summand can be estimated by
\begin{align}
    \frac{C_1}{(4 \pi \sigma^2)^\frac{d}{4}} \left(\frac{1}{at^{1+\frac{d}{4}}}+\frac{1}{c a^{\frac{1}{2}} t^{\frac{1}{2} + \frac{d}{4}}}\right).
\end{align}
By the induction hypothesis,  the second summand can be estimated by
\begin{align}
    & \quad \Big( \| u^{d+1}(t,x^{d+1})-h^{d+1}(t,x^{d+1}) \|_{L_2(\R^1)} +\| h^{d+1}(t,x^{d+1}) \|_{L_2(\R^1)} \Big) \\ 
    & \quad \cdot  \Big(C_d \Big(\frac{1}{at^{1+\frac{d-1}{4}}}+\frac{1}{ca^{\frac{1}{2}} t^{\frac{1}{2}+\frac{d-1}{4}}}\Big) + R_d(a,c,t) \Big)\\
    &\le \Big( C_1 \Big(\frac{1}{at}+\frac{1}{c(ta)^{\frac{1}{2}}}\Big) + \frac{1}{(4 \pi \sigma^2 t)^\frac{1}{4}} \Big)\\
    &\quad \cdot  \Big( C_d \Big(\frac{1}{at^{1+\frac{d-1}{4}}}+\frac{1}{ca^{\frac{1}{2}} t^{\frac{1}{2}+\frac{d-1}{4}}}\Big) + R_d(a,c,t) \Big).
\end{align}
Straightforward multiplication finishes the proof.
\end{proof}

The above assumption on the independence of the initial components $(X_0^1,...,X_0^d)$ was made for simplicity. The convergence result for $t\to \infty$ appears to hold true more generally and we forgo the technical details. Furthermore, weak convergence results concerning the paths of a rescaled Kac process towards a Brownian motion can be found in \cite{HLQ24,S1982}.

Lastly, we note that the long-time asymptotics of Theorem \ref{convergence-multi-d} only reasonably apply to the VE Kac process \eqref{eq:VE-KAC}, since the VP Kac process \eqref{eq:VP-KAC} is constructed to reach its terminal distribution $P_{\mathbf{K}(1)}$ in finite time.\footnote{Note however that Theorem \ref{convergence-multi-d} also covers the limit $a,c \uparrow \infty$.} 
 
\begin{figure}[ht!] 
    \centering
    \includegraphics[width = 0.18\textwidth]{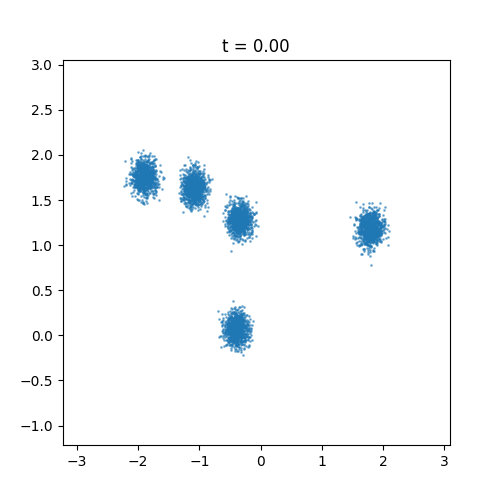}
    \includegraphics[width = 0.18\textwidth]{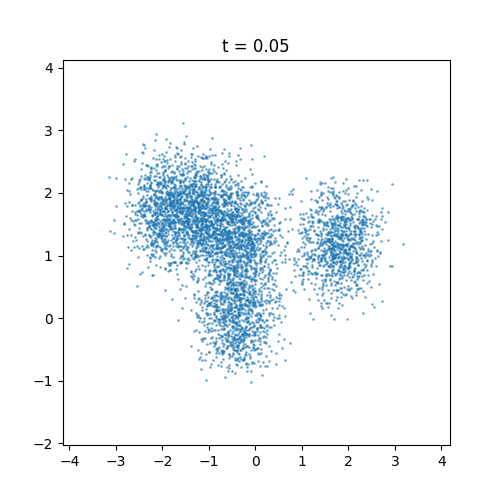}
    \includegraphics[width = 0.18\textwidth]{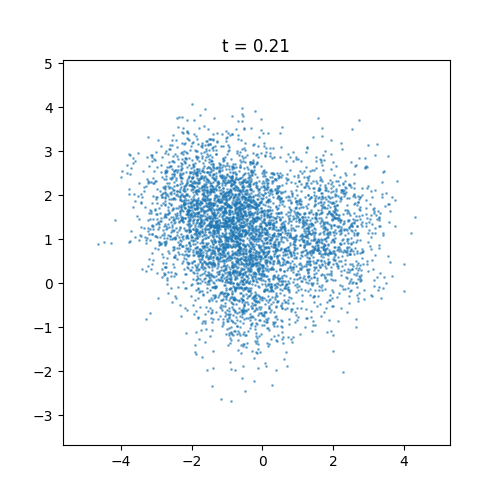}
    \includegraphics[width = 0.18\textwidth]{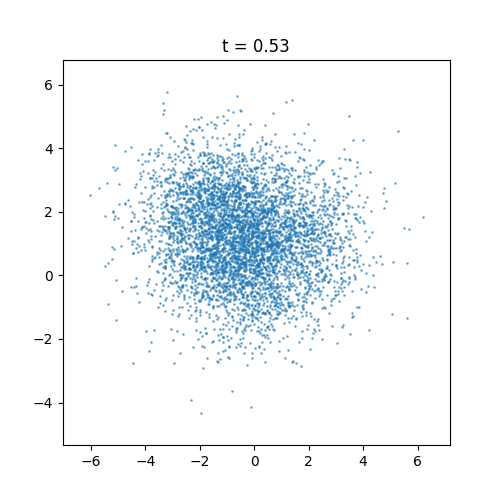}
    \includegraphics[width = 0.18\textwidth]{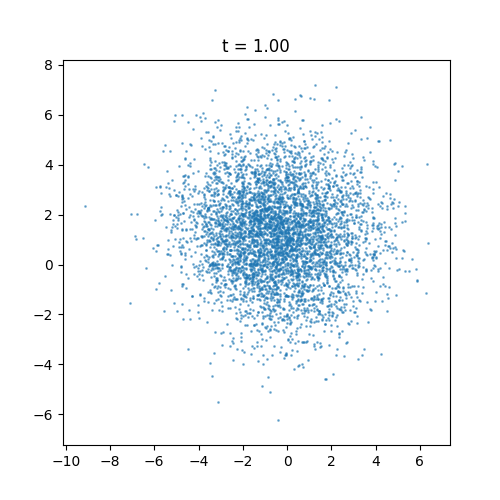}
    
    \caption{The distribution of $\mathbf{X}_t$ starting in a 2D Gaussian mixture, simulated until time $T=3$ with 'large' parameters $(a,c) =(100,10)$. As described in Theorem \ref{convergence-multi-d}, we see a 'diffusion' like process with approximate variance $\sigma^2 t = \frac{c^2}{a}t = t$.}
    \label{flow_high_ac}
\end{figure}

\section{Velocity Field of the Multi-Dimensional Kac Flow}\label{sec:velo}
In this section, we want to develop a method how to retrieve the velocity field $v_t$ of the Kac flow in multiple dimensions. By conditioning on single points $x_0 \in \R^d$, the multi-dimensional velocity field $v_t(x|x_0)$ decomposes into univariate velocities $v_t^i(x^i|x_0^i)$ which are analytically and numerically accessible. These conditional velocity fields can then be used to approximate the true velocity field $v_t$ within the framework of conditional flow matching. But note that conceptually, our velocities are conditioned to start at $t=0$ in the target measure -- reversing the direction of usual flow matching.

The following general lemma  states that the velocity field of a multi-dimensional Wasserstein flow can be retrieved by its 1D components if the probability flow admits a multiplicative decomposition (by independence).
Related factorization results appear in the generator matching framework \cite[Proposition 4]{holderrieth2025generatormatchinggenerativemodeling} and in the discrete flow matching framework \cite[Section 7.5]{lipman2024flowmatchingguidecode}. 
These approaches consider probability evolutions arising from Markovian dynamics or from discrete flow formulations.
Since the Kac process \eqref{eq:kac} is a non-Markovian\footnote{Yet, it becomes Markovian by considering the joint process $(K(t), \dot K(t))$.} process \cite[Chapter 12]{EthierKurtz1986} in a continuous setting, and for completeness, we provide a rigorous and general proof here.

\begin{lemma} \label{cont_eq_decomposes}
    Let $\mu_t \in AC^2(I; \mathcal P_2(\R^d))$ such that     
    \begin{align}\label{product-decomposition}
        \mu_t(x) = \prod_{i=1}^d \mu^i_t(x^i) \quad \text{ for all } x = (x^1,..., x^d).
    \end{align}
    Furthermore, assume that the one-dimensional measure flows $\mu^i_t$ satisfy $\mu^i_t \in AC^2(I; \mathcal P_2(\R))$ and a continuity equation with some vector field $v^i_t$, i.e. 
    \begin{align}\label{decomposed-CE}
        \partial_t \mu^i_t(x^i) = - \partial_{x^i} (\mu^i_t(x^i) v^i_t(x^i) ) \quad \text{ for all } x^i \in \R.
    \end{align}
    Then $\mu_t$ satisfies the continuity equation $\partial_t \mu_t(x) = - \nabla \cdot \left(\mu_t(x) v_t(x) \right)$
    with the velocity field  
    \begin{align}\label{velo-decomposition}
        v_t(x):= (v^1_t(x^1), ... , v^d_t(x^d)) \quad \text{ for all } x = (x^1,..., x^d).
    \end{align}    
\end{lemma}

\begin{proof}
First, we give a formal derivation of the result, and justify the calculations more rigorously afterwards. We compute $\partial_t \mu_t(x)$ formally by use of the product rule and obtain
\[
\partial_t \mu_t(x) = \sum_{i=1}^d \Big( \prod_{j \neq i}^d \mu_t^j(x^j) \Big) \partial_t \mu_t^i(x^i).
\]
Using the continuity equation for $\mu_t^i$, we obtain
\[
\partial_t \mu_t(x) = - \sum_{i=1}^d \Big( \prod_{j \neq i}^d \mu_t^j(x^j) \Big) \partial_{x^i} ( \mu_t^i(x^i) v_t^i(x^i) ).
\]
Pulling the product which does not depend on $x^i$ into the $\partial_{x^i}$-derivative, formally yields the claim
\[
\partial_t \mu_t(x) = - \sum_{i=1}^d \partial_{x^i} ( \mu_t(x) v_t^i(x^i) ) = - \nabla \cdot ( \mu_t(x) v_t(x) ).
\]
More precisely, we can test above equations with product test functions $\phi(t,x) = \phi_0(t) \prod_{i=1}^d \phi_i(x^i)$, where $\phi_i \in C_c^\infty(\R)$, and obtain the result in the weak formulation. Finally, since the linear subspace generated by such products is dense in $C_c^\infty((0,\infty) \times\R^d)$ and by Lebesgue's dominated convergence, this yields the claim. We refer to Section \ref{sect:decomposition-lemma} in the appendix for the full mathematical details.
\end{proof}

The following main theorem will be of fundamental importance for the generative modeling via the Kac flow. In particular, we obtain an analytical formula for the conditional velocity field $v_t(x|x_0)$ of a Kac process conditioned to start in a point $x_0 \in \R^d$.

\begin{theorem}\label{thm14}
For a fixed $x_0 \in \R^d$, let $\mathbf{X}_t \coloneqq {x_0} + \mathbf{K}(t)$ be the VE Kac process starting in $x_0$.
 Then, $\mu_t(\cdot|x_0) \coloneqq P_{\mathbf{X}_t}$  satisfies a continuity equation \eqref{eq:ce} with the velocity field
    \begin{align}\label{velo-decomposition-kac}
        v_t(x|x_0):= (v^1_t(x^1|x_0^1), ... , v^d_t(x^d|x_0^d)) \quad \text{ for all } x = (x^1,..., x^d),
    \end{align} 
    where the univariate components $v^i_t(x^i|x_0^i)$ are  given by 
    \begin{align} \label{velo_1d_shifted}
       v_t^i(x^i|x_0^i) &\coloneqq 
       \left\{
       \begin{array}{cl}
           \displaystyle (x^i-x_0^i) \Big(t + \frac{r_t(x^i-x_0^i)}{c} \cdot \frac{I_{0}(\beta r_t(x^i-x_0^i))}{I_{0}'(\beta r_t(x^i-x_0^i))}\Big)^{-1} 
           & \text{if} \quad x^i \in (x_0^i-ct, x_0^i+ct),\\
           \; \; \, c &\text{if} \quad x^i= x_0^i + ct, \\
           -c & \text{if} \quad x^i=x_0^i -ct,\\
             \; \; \, \text{arbitrary} & \text{otherwise}.
       \end{array}
       \right.
   \end{align}
   Furthermore, it holds
   \begin{equation} \label{eq:conditional-velocity-def_revised_sec}
    v^{i}_t(x^{i} | x_0^{i}) = \E[\dot{X}^{i}(t) \mid X_t^{i} = x^{i}, X_0^{i} = x_0^{i}].
\end{equation}
\end{theorem}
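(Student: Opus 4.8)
The plan is to prove the two assertions in turn: first the product decomposition \eqref{velo-decomposition-kac}--\eqref{velo_1d_shifted}, then the conditional-expectation identity \eqref{eq:conditional-velocity-def_revised_sec}.

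\emph{Decomposition.} I would argue in four short steps. First, the law $\mu^i_t(\cdot|x_0^i)$ of $X^i_t = x_0^i + K^i(t)$ is the pushforward under the translation $x \mapsto x + x_0^i$ of the law of the $1$D Kac process started at $0$; since Theorem \ref{velo_dirac} identifies the velocity field of the latter and the continuity equation is translation invariant, $\mu^i_t(\cdot|x_0^i)$ solves $\partial_t \mu^i_t = -\partial_{x^i}\big(\mu^i_t\, v^i_t(\cdot|x_0^i)\big)$ with $v^i_t(\cdot|x_0^i)$ precisely the shifted field \eqref{velo_1d_shifted}. Second, because $\mathbf K(t) = (K^1(t),\dots,K^d(t))$ has i.i.d.\ components, $\mathbf X_t = x_0 + \mathbf K(t)$ has independent components and hence $\mu_t(\cdot|x_0) = \prod_{i=1}^d \mu^i_t(\cdot|x_0^i)$. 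Third, Proposition \ref{prop:lipschitz}, applied with $P_{\mathbf X_0} = \delta_{x_0} \in \mathcal P_2(\R^d)$, shows $\mu_t(\cdot|x_0) \in AC^2([0,T];\mathcal P_2(\R^d))$ for every $T>0$. Fourth, Lemma \ref{cont_eq_decomposes} now applies directly and yields \eqref{velo-decomposition-kac}.

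\emph{Conditional expectation.} Since \eqref{eq:conditional-velocity-def_revised_sec} concerns a single coordinate and $X_0^i$ is independent of $K^i$, it reduces to showing that $\bar v_t(x) := \E[\dot K(t)\mid K(t) = x]$ equals the field $v(t,x)$ from \eqref{velo_1d}, for the Kac process $K$ started at $0$. Here $\dot K(t) = c\,{\rm B}_{\frac12}(-1)^{N(t)}$, which is $\mathbb P$-a.s.\ well defined for fixed $t$ (the event that $t$ is a jump time of $N$ is null) and satisfies $K(t) = \int_0^t \dot K(s)\,\d s$ by the construction \eqref{eq:kac}. I would first verify that $(\mu_t,\bar v_t)$ solves the continuity equation weakly: for $\phi \in C_c^\infty(\R)$, the almost sure Lipschitz continuity of $t \mapsto K(t)$ (Proposition \ref{prop:lipschitz}) together with dominated convergence — legitimate since $|\phi'(K(t))\dot K(t)| \le c\|\phi'\|_\infty$ — gives $\tfrac{\d}{\d t}\E[\phi(K(t))] = \E[\phi'(K(t))\dot K(t)]$, and the tower property rewrites the right-hand side as $\E[\phi'(K(t))\,\bar v_t(K(t))] = \int_\R \phi'\,\bar v_t \,\d\mu_t$; moreover $|\bar v_t| \le c$, so \eqref{eq:explode_1} holds trivially. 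Next, on the atoms one has $\{K(t) = \pm ct\} = \{N(t) = 0,\ {\rm B}_{\frac12} = \pm 1\}$, on which $\dot K(t) = \pm c$ deterministically, whence $\bar v_t(\pm ct) = \pm c = v(t,\pm ct)$. Finally, $\bar v_t$ is antisymmetric in $x$ by the symmetry ${\rm B}_{\frac12} \mapsto -{\rm B}_{\frac12}$, so on $(-ct,ct)$ the computation in the proof of Theorem \ref{velo_dirac} (using \eqref{flux-continuity-equation} and the antisymmetry of $\tilde J$) forces $\bar v_t(x) = \tilde J(t,x)/\tilde u(t,x) = v(t,x)$. Reinstating the translation and the conditioning on $X_0^i$ yields \eqref{eq:conditional-velocity-def_revised_sec}.

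\emph{Main obstacle.} The decomposition part is routine once translation invariance, the independence of the components, and Lemma \ref{cont_eq_decomposes} are in place. The delicate part is \eqref{eq:conditional-velocity-def_revised_sec}: one must justify that $\dot K(t)$ is a genuine ($\mathbb P$-a.s.\ defined) random variable for fixed $t$ with $K(t) = \int_0^t \dot K(s)\,\d s$, that differentiation under the expectation and the tower property are applicable, and — most importantly — that once $\bar v_t$ is known to solve the continuity equation it must coincide $\mu_t$-a.e.\ with $v_t$. The last point relies on the fact that the continuity-equation velocity is uniquely determined $\mu_t$-a.e.\ on $\supp \mu_t$ (a divergence-free perturbation $w_t$ would have to satisfy $\mu_t w_t = \mathrm{const}$, hence equal $c_0/\tilde u_t$ on $(-ct,ct)$ and vanish at the atoms, forcing $c_0 = 0$), which is exactly what the derivation in the proof of Theorem \ref{velo_dirac} exploits, together with the antisymmetry of both fields.
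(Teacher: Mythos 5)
Your argument is correct, and its first half coincides with the paper's proof: translation invariance together with Theorem \ref{velo_dirac} for the univariate fields, independence of the components of $\mathbf{K}(t)$ for the product law, Proposition \ref{prop:lipschitz} (with $P_{\mathbf X_0}=\delta_{x_0}$) for absolute continuity, and Lemma \ref{cont_eq_decomposes} for the decomposition \eqref{velo-decomposition-kac}--\eqref{velo_1d_shifted}. Where you genuinely depart from the paper is \eqref{eq:conditional-velocity-def_revised_sec}: the paper obtains this identity by simply invoking \cite[Theorem 3.3]{liu2023flow}, resp.\ \cite[Theorem 6.3]{WS2025}, i.e.\ the general flow-matching characterization of the velocity as a conditional expectation, whereas you prove it from scratch by showing that $\bar v_t(x)=\E[\dot K(t)\mid K(t)=x]$ itself solves the continuity equation (differentiation under the expectation via the a.s.\ Lipschitz paths and $|\dot K(t)|=c$, then the tower property), computing it exactly on the atoms $\{K(t)=\pm ct\}=\{N(t)=0,\,{\rm B}_{\frac12}=\pm1\}$, and concluding $\bar v_t=v_t$ $\mu_t$-a.e.\ from the $\mu_t$-a.e.\ uniqueness of the velocity (your observation that $\partial_x(\mu_t w_t)=0$ forces $\mu_t w_t=c_0\,\d x$ with $c_0=0$ because of the atoms and the compact support). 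The paper's route buys brevity and rests on a general theorem; yours buys a self-contained argument tailored to the pathwise structure of the Kac process and makes explicit the uniqueness point that the paper's own proof of Theorem \ref{velo_dirac} handles only implicitly through the antisymmetry ansatz. Two minor remarks: since conditional expectations are defined only up to $\mu_t$-null sets, your identification naturally yields \eqref{eq:conditional-velocity-def_revised_sec} for $\mu_t$-a.e.\ $x^i$ (and exactly at the atoms), which is the correct reading of the statement given the ``arbitrary otherwise'' clause; and passing from your weak formulation to the pointwise identity $\bar v_t=\tilde J(t,\cdot)/\tilde u(t,\cdot)$ on $(-ct,ct)$ requires the same localization already carried out in the proof of Theorem \ref{velo_dirac}, so no new ingredient is needed there.
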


\begin{proof}
    By Proposition \ref{prop:lipschitz}, $\mu_t(\cdot|x_0)$ admits a velocity field $v_t(\cdot|x_0)$ satisfying the continuity equation \eqref{eq:ce}. By construction, the components $(X_t^1,...,X_t^d)$ of $\mathbf{X}_t$ are mutually independent. Hence, $\mu_t(x|x_0) = \prod_{i=1}^d \mu_t^i(x^i|x_0^i)
    $, 
    where the one-dimensional probability flows $\mu_t^i(\cdot|x_0^i)$ fulfill a continuity equation with 1D velocity fields $v_t^i(\cdot|x_0^i)$.
    By Lemma \ref{cont_eq_decomposes}, it holds the decomposition \eqref{velo-decomposition-kac}, and Theorem \ref{velo_dirac} immediately gives the formula \eqref{velo_1d_shifted}.
    Finally, \cite[Theorem 3.3]{liu2023flow}, resp.\ \cite[Theorem 6.3]{WS2025} provides \eqref{eq:conditional-velocity-def_revised_sec}.
    \end{proof}
    
We will derive an analogous result concerning the VP/mean-reverting Kac process \eqref{eq:VP-KAC} later in Subsection \ref{subsec:image_gen}, where we also include the use of time schedules.
    
In stochastic analysis, the relation \eqref{eq:conditional-velocity-def_revised_sec} has been known in connection with the  \emph{mimicking of stochastic processes}, see e.g.\ \cite{BS2013,G1986}. Here, the mimicking process is given by the flow ODE \eqref{eq:flow_ode}.   

Having identified the conditional vector field $v_t(\cdot|x_0)$ for the conditional distribution $\mu_t(\cdot|x_0)$ of the Kac process, we get the following result
by employing \cite{lipman2023flow}, see also
\cite[Theorem 5.1, Remark 5.3]{WS2025}, with slight adjustments to our setting.

\begin{corollary}\label{final_velo}
Let $\mathbf{X}_t$ be the multi-dimensional VE Kac process starting in $\mathbf{X}_0 \sim \mu_0 \in \mathcal{P}_2(\mathbb{R}^d)$.  Assume that $\mathbf{X}_0$ is independent of $\mathbf{K}(t)$, and assume that $\mathbf{X}_0$ has a density $f_0$.
Then, the marginal distribution of $\mathbf{X}_t$ 
admits the density $u_t$ given by
\begin{equation}\label{total_prob}
    u_t(x) = \int u_t(x | x_0) f_0(x_0)  \d x_0,
\end{equation}
where $u_t(\cdot | x_0)$ is the conditional distribution of $\mathbf{X}_t$ conditioned to $\mathbf{X}_0 = x_0$, and by symmetry\footnote{This is \emph{not} an application of Bayes' rule.},
$u_t(x| \cdot)$ is exactly the distribution $u_t(\cdot| x)$ for any $x \in \R^d$.\\
Denote the conditional velocity field corresponding to $u_t(\cdot|x_0)$ by $v_t(\cdot | x_0)$. 
By independence of $\mathbf{X}_0$ and $\mathbf{K}(t)$, it is given via \eqref{velo_1d_shifted}, and finally, 
the vector field
\begin{equation}\label{velo-FM}
    v_t(x) := \frac{1}{u_t(x)} \int u_t(x | x_0) \, v_t(x | x_0) f_0(x_0) \d x_0
\end{equation}
satisfies the continuity equation
$$
    \partial_t u_t(x) = - \nabla \cdot \big( u_t(x) v_t(x) \big)
$$
in the distributional sense.
\end{corollary}


Now we have all the ingredients to define a feasible loss function for learning the above velocity field $v_t$ by a neural network $v^\theta_t$.
Clearly, it is desirable to use the loss
\begin{equation}
        \mathcal{L}(\theta) \coloneqq
        \mathbb{E}_{(t,x) \sim \mathcal{U}(0,T) \times_t 
       u_t} \left[ \left\| v^\theta(t,x) - v(t,x) \right\|^2 \right],
\end{equation}
where $\mathcal{U}(0,T) \times_t u_t$ is understood in the sense of \emph{disintegration}, see \cite[Ch.\ 2.3]{WS2025}; but here we have no direct access to $v$. However, following the same lines as in \cite{lipman2023flow}, we see that
$$
\mathcal{L}(\theta) =      \mathcal{L}_{\mathrm{CFM}}(\theta) + \text{const}
$$
with a constant not depending on $\theta$, and the Conditional Flow Matching (CFM) loss given by
\begin{align} \label{fm_loss}
     \mathcal{L}_{\mathrm{CFM}}(\theta) = \mathbb{E}_{t \sim \mathcal{U}(0,T), \, x_0 \sim \mu_0, \, x \sim u_t(\cdot| x_0)} \left[ \left\| v^\theta(t,x) - v(t,x| x_0) \right\|^2 \right].    
\end{align}     
Here, we have access to both $v(t,x| x_0)$ and $u_t(\cdot| x_0)$ via \eqref{velo-decomposition-kac} and \eqref{telegraph-formula-1d-dirac}, respectively, which we will exploit in the next section.
\begin{remark}
    Notice that the decomposition of velocity fields from Lemma \ref{cont_eq_decomposes} holds generally and independently from our specific Kac model. Hence, our approach of decomposing the conditional velocity in conjunction with conditional flow matching is \emph{generally applicable} to any model with a \emph{componentwise} forward process (i.e.\ independent components); a setting which is already routinely employed in diffusion models.
\end{remark}

\section{Numerical Experiments} \label{sec:numerics}
\begin{figure}[ht!] 
    \centering
    \includegraphics[width = 0.19\textwidth]{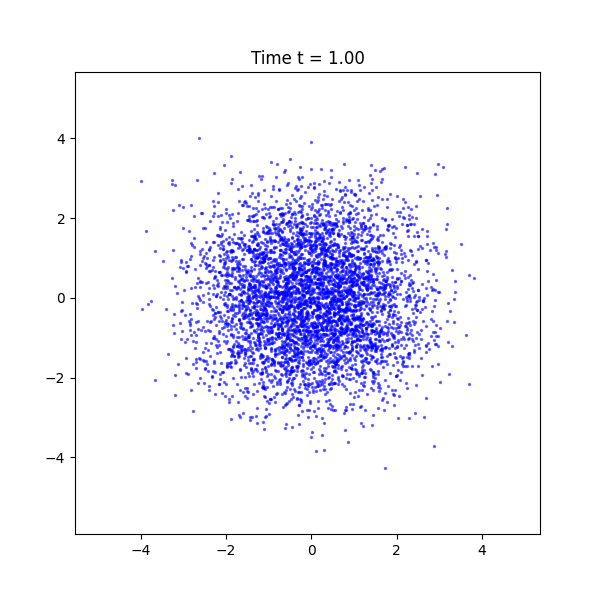}
    \includegraphics[width = 0.19\textwidth]{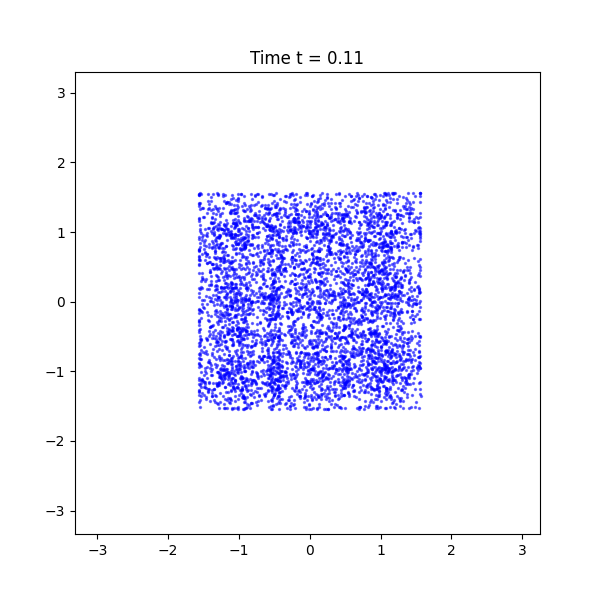}
    \includegraphics[width = 0.19\textwidth]{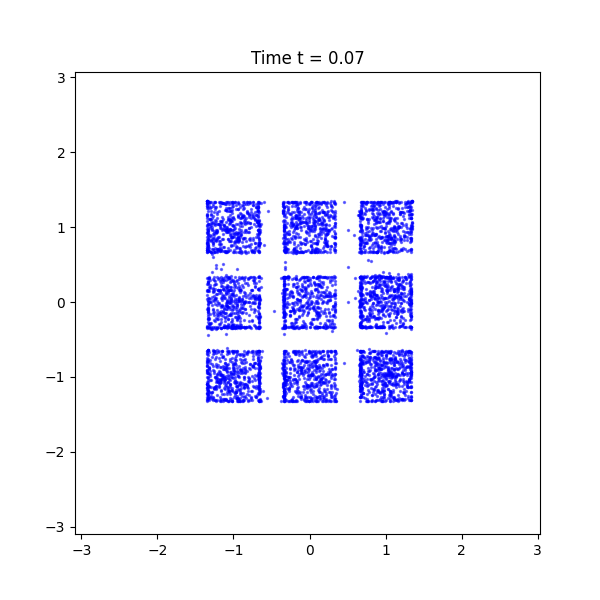}
    \includegraphics[width = 0.19\textwidth]{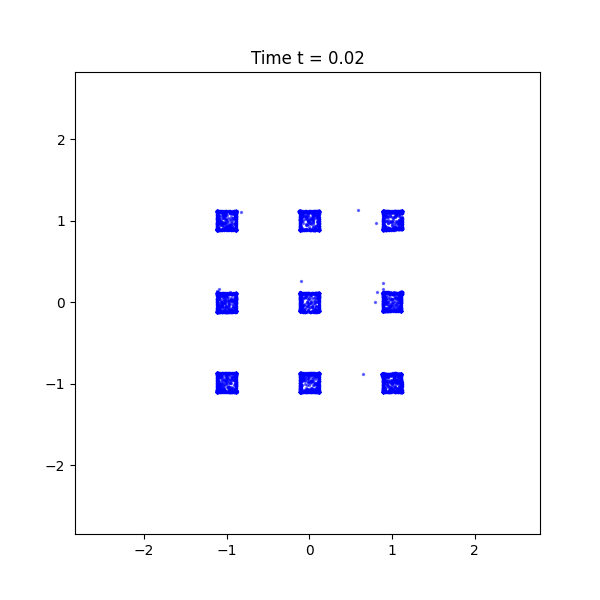}
    \includegraphics[width = 0.19\textwidth]{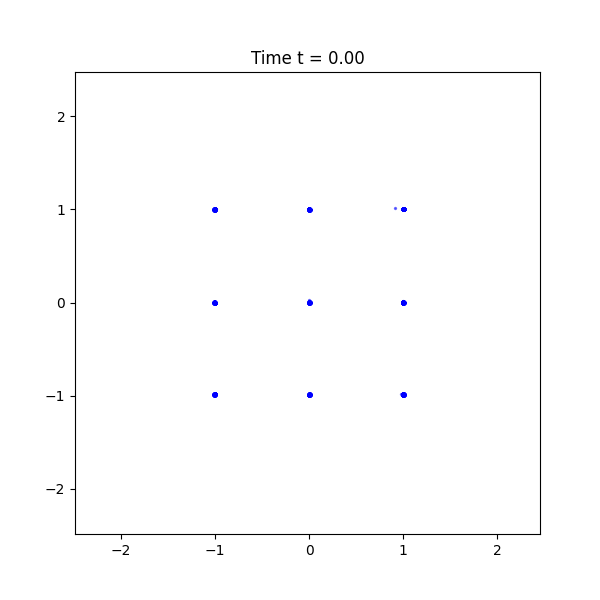} 
  
    \caption{Backward evolution of the \emph{learned} Kac flow for $(a,c) =(25,5)$, see also Figure \ref{fig:2d_experiment}.}
    \label{backward_evolution_2d}
    
\end{figure}

In this section, we use the flow matching framework to train a neural network to approximate the velocity field from Corollary \ref{final_velo} by minimizing the CFM loss given in \eqref{fm_loss}. 
To this end, we compute the conditional velocity $v_t(x|x_0)$ via its analytic formula \eqref {velo_1d_shifted}. In order to sample from $\mathbf{X}_t \sim u_t(\cdot | x_0)$, where $x_0$ is fixed, we only need to sample i.i.d.\ components $K_t^i$ of the Kac process starting in $0$. This can be done efficiently, since we have access to the analytic form \eqref{telegraph-formula-1d-dirac} of the one-dimensional Kac distribution starting in $0$ (use e.g.\ inverse transform sampling).  
Alternatively, if we wish to sample fixed realizations of a Kac process for varying times $t$, we can efficiently simulate the random walk itself as 
explained in Appendix \ref{app:simulating_kac}.

During inference, we sample the latent variable from the Kac process \( K_T \) starting in zero, at a final time \( T \), and then integrate the flow ODE \eqref{eq:flow_ode} backwards in time using an ODE solver with the learned vector field as the drift. Using the standard Kac flow introduces an approximation error, namely the distance between $X_T$ and $K_T$. This can be avoided by applying the mean reverting Kac flow \eqref{eq:mean-rev-kac} which we will reintroduce below with time schedules.

\subsection{Diffusion vs. Kac Process: A 2D Experiment with Dirac-like Modes}\label{sec:2d_exp}

In \eqref{diffusion-local-explosion}, we verified that  velocity fields of diffusion flows admit a local explosion at small times $t \approx 0$, when starting in Dirac points. In contrast, the multi-dimensional Kac flow enjoys a globally bounded velocity field as proven in Proposition \ref{prop:lipschitz}. We want to demonstrate this difference in the qualitative behavior by the following toy problem in 2D:
as ground truth we choose an equally weighted Gaussian mixture with $9$ modes on a $3 \times 3$ grid, where each mode is given by an isotropic Gaussian with \emph{small} standard deviation $\sigma = 0.0001$, representing some Dirac points, see Figure \ref{fig:2d_experiment}.\\
We apply the conditional flow matching framework to both the Kac and diffusion based model. Here we use the VE Kac process \eqref{eq:VE-KAC} starting in the data with parameters $\frac{c^2}{a}=1$, and the standard diffusion with parameter $\sigma=1$ and the velocity field \eqref{velo2}.

We trained each model for $500$k iterations, and used the negative log-likelihood as a validation loss. In Figure \ref{fig:2d_experiment} we plot the resulting generated samples and report the associated training iterations. The complete experimental setup can be found in the Appendix \ref{implementation}. For the backward evolution of the learned $(25,5)$-Kac flow, see Figure \ref{backward_evolution_2d}.

\begin{figure}[ht!]
  \centering
  \begin{minipage}[b]{0.24\textwidth}
    \centering
    \includegraphics[width=\linewidth]{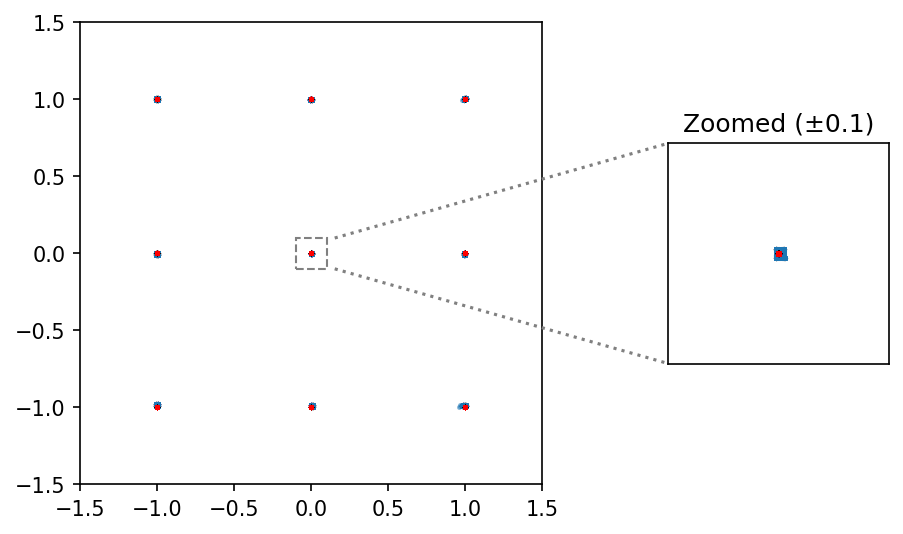}
    \par\smallskip
    {\small \( (a,c)=(25,5)\)}\\
    {\small 80k iterations}
  \end{minipage}\hfill
  \begin{minipage}[b]{0.24\textwidth}
    \centering
    \includegraphics[width=\linewidth]{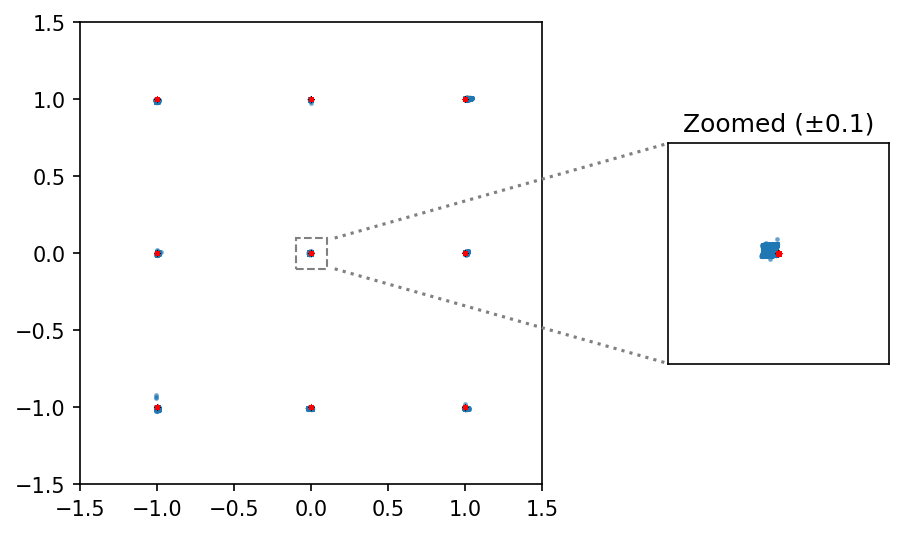}
    \par\smallskip
    {\small \((a,c)=(64,8)\)}\\
    {\small 280k iterations}
  \end{minipage}\hfill
  \begin{minipage}[b]{0.24\textwidth}
    \centering
    \includegraphics[width=\linewidth]{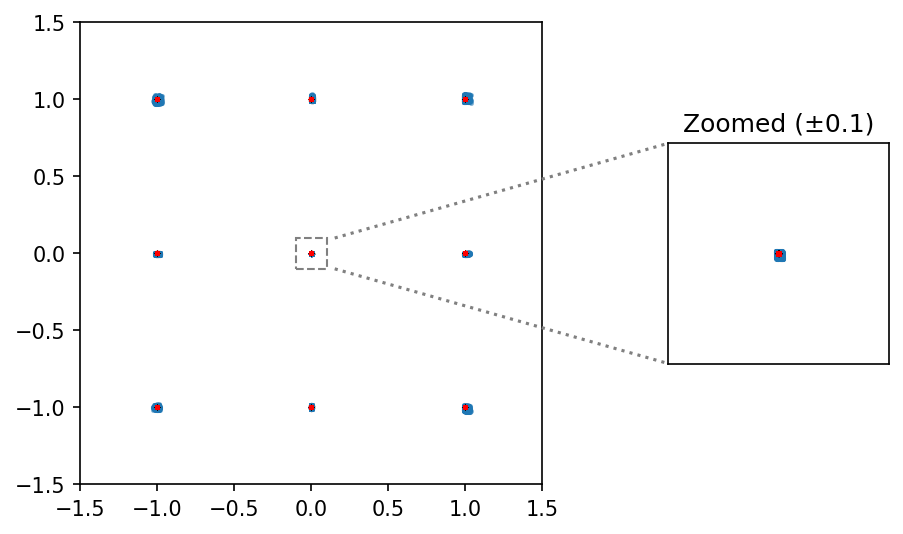}
    \par\smallskip
    {\small \( (a,c)=(1600,40)\)}\\
    {\small 460k iterations}
  \end{minipage}\hfill
  \begin{minipage}[b]{0.24\textwidth}
    \centering
    \includegraphics[width=\linewidth]{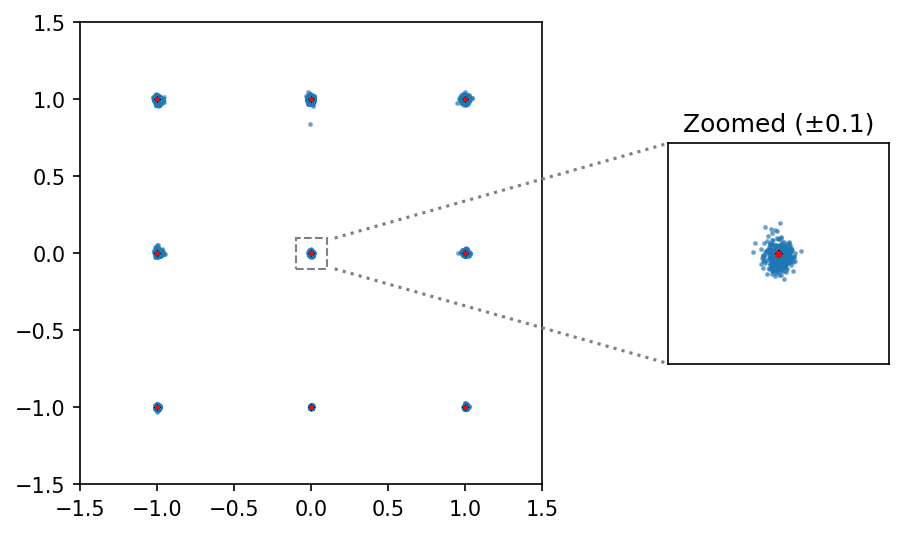}
    \par\smallskip
    {\small diffusion}\\
    {\small 420k iterations}
  \end{minipage}
  \caption{Generated samples (blue) vs.\ ground truth (red) at the indicated iteration for each model. The Kac model can precisely recover the small modes, while the diffusion model creates  ``blobs''.}
  \label{fig:2d_experiment}
\end{figure}

In contrast to our Kac generative model, the diffusion based model is not able to precisely approximate the Dirac-like modes, even with long training times and vanishing truncation times\footnote{By the explosion \eqref{diffusion-local-explosion} time truncations were necessary, since otherwise the associated adaptive step size ODE solver diverged with intended step size going to $0$.} $\epsilon \approx 0$.
Clearly, large values of parameters $(a,c)$ lead to diffusion-like results
 as theoretically underlined in Theorem \ref{convergence-multi-d}.

\subsection{Image Generation}\label{subsec:image_gen}
In general, the \emph{'standard'} VE Kac process $X_t = X_0 + K_t$ 
and \(K_t\) themselves are not equal at the final time \(T\).  It is common practice in generative modeling to augment the process so that the data no longer contributes to the final distribution (compare with VP diffusion processes, e.g.\ the Ornstein–Uhlenbeck process).  Additionally, we introduce a time‐scheduling function for the noising process. 
Let $X_0$ and $K_t$ be independent, 
and for simplicity, we fix \(T=1\) for the rest of this section.

\paragraph{The Mean‐Reverting Kac Process.}

Generalizing the VP Kac process \eqref{eq:VP-KAC}, 
we define the \emph{mean‐reverting} Kac process by
\begin{equation}\label{eq:mean-rev-kac}
  M_t \;\coloneqq\; f(t)\,X_0 \;+\; K_{g(t)},
  \quad t\in[0,1],
\end{equation}
with suitably smooth \emph{time-scheduling} functions \(f,g\) satisfying
\[
  f(0)=1,\quad f(1)=0 \quad \text{and} \quad g(0)=0,\quad g(1) = 1.
\]
In particular, the latent distribution is exactly given by the law of \(M_1=K_{1}\).  
Note that $K_{1}$ is \emph{not} Gaussian, but supported on $[-c,c]^d$ with two atomic components per coordinate (Diracs at $\pm c$) and an absolutely continuous part on $(-c,c)^d$, see \eqref{telegraph-formula-1d-dirac} and Figure \ref{role-of-a}.
Differentiating \eqref{eq:mean-rev-kac} gives
\[
  \dot M_t = \dot f(t)\,X_0 \;+\; \dot g(t)\,\dot K_{g(t)},
\]
and hence the conditional velocity field is given by
\begin{align}\label{eq:velo-OU}
  v_M(t,x\mid x_0)
  &= \mathbb{E}\bigl[\dot M_t \mid M_t=x,\;X_0=x_0\bigr]\nonumber\\
  &= \mathbb{E}\bigl[\dot f(t)\,x_0 + \dot g(t)\,\dot K_{g(t)}
    \,\bigm|\,K_{g(t)}=x-f(t)x_0,\;X_0=x_0\bigr]\nonumber\\
  &= \dot f(t)\,x_0 \;+\; \dot g(t)\,v_K\bigl(g(t),\,x - f(t)x_0\mid0 \bigr),
\end{align}
where \(v_K(t,x\mid  0)\) is the conditional Kac velocity field from \eqref{velo-decomposition-kac}.  
Note that in the last equality, the independence of $X_0$ and $K_t$ is used to drop the condition $X_0 = x_0$.
Now, by the same arguments as in Corollary \ref{final_velo}, one recovers the corresponding vector field for the mean‐reverting Kac process.

Similar to the VE Kac process starting in $X_0$, the mean-reverting Kac process \eqref{eq:mean-rev-kac} admits a Lipschitz property in $\mathcal{P}_2$ and hence a bounded velocity norm $\|v_t\|_{L_2(\mu_t)}$, for details we refer to Proposition \ref{prop:lipschitz_2} in the appendix.

\paragraph{Connection to Flow Matching.}

Now let \(R_t \coloneqq  f(t) X_0 + B_{g(t)}\) with a standard Brownian motion \(B_t\). If we choose
\[
  f(t)=1-t,\qquad g(t)=t^2,
\]
then it holds
\begin{equation}\label{eq:rt-def}
  R_t = (1-t)\,X_0 + B_{t^2}
      \overset{d}{=} (1-t)\,X_0 + t\,B_1,\quad t\in[0,1].
\end{equation}
Notice that \eqref{eq:rt-def} matches exactly the forward process used in recent flow matching works, see \cite{lipman2023flow, WS2025}.  We experimented with various schedules and found that the above flow matching pair
\[
  (f,g) = (1-t,\;t^2)
\]
also yielded good generation results for our Kac model. In the following, we compare our approach via \( (M_t)_t \) with that using the process \( (R_t)_t \), in particular with flow matching based on the independent coupling.

\paragraph{CIFAR-10.} We test our approach on generating CIFAR-10 images. We train the velocity field using conditional flow matching with the vector field \eqref{eq:velo-OU} and compare a variety of choices for $(a,c)$ and the schedules $g(t) =t$ and $g(t)=t^2$. We apply a UNET architecture to parameterize the velocity field and use an ODE solver to simulate the corresponding flow ODE. We aligned our hyperparameters with those employed in \cite{tong2024improving} to train our models.  Our code is available online\footnotemark[1]\footnotetext[1]{https://github.com/JChemseddine/telegraphers}. For further details we refer to the implementation details in Appendix \ref{implementation}.

In Table \ref{fid_results} we report the FID results of our Kac methods via $M_t$ as well as the diffusion baselines via $R_t$. We implemented the baseline models ourselves and denote them by ``Diff'' for $g(t)=t$ and ``FM'' for $g(t)=t^2$. In Figures \ref{fig:cifar-1}, \ref{fig:cifar-2} we plot a selection of the generated images. Recall that the parameter $a$ corresponds to the damping coefficient and $c$ to the propagation speed. The schedule $g(t) =t$ generally led to worse performance, especially for the baseline and our methods with diffusion-like parameters. However, certain combinations of high damping and low velocity pairs $(a,c)$ such as $(25,2)$ were able to perform well. In contrast, the schedule $g(t)=t^2$ improved the baseline substantially, and our methods across the board. With this experiment we aim to demonstrate the scalability of our method, an extensive study of the effect of the parameters $(a,c)$ is beyond the scope of the paper and left to future work. For further visual examples and a nearest neighbour analysis, see Appendix \ref{further_examples}.

\paragraph{Heuristics regarding the role of the damping $a > 0$.}
The damping given by the parameter $a$ seems to have a strong impact on the generation results of our method. Without the damping, i.e.\ $a=0$, the flow ODE \eqref{eq:flow_ode} becomes numerically ill-posed: the latent distribution in 1D would become pure shockwaves/wave fronts at $\pm cT$, with no absolutely continuous part in between. Hence, sampling from these two Dirac points could not produce e.g.\ absolutely continuous target distributions. Mathematically, this is due to the exploding Lipschitz constants of the flow map $\varphi(t, \cdot)$ and velocity field $v(t, \cdot)$ at times $t$ close to the terminal/latent time $T$.

By the addition of the damping $a > 0$, the wave fronts vanish with the factor $e^{-aT}$, getting absorbed into the absolutely continuous part $\tilde u$ as described in \eqref{telegraph-formula-1d-dirac}. The flow map/velocity field's spatial regularity improves with increasing dampings $a>0$ around terminal time $T$, see Figure \ref{role-of-a} in the appendix, making the flow ODE \eqref{eq:flow_ode} numerically well-behaved and explaining the better results for high $a$'s.

\begin{table}[ht!]
\centering
\small

\setlength{\tabcolsep}{2pt}%
\renewcommand{\arraystretch}{1.15}%

\begin{minipage}[t]{0.48\linewidth}
\centering
\vspace{0pt}%
\begin{tabular}{@{}p{2.6cm}p{0.8cm}|p{2.6cm}p{0.8cm}@{}}
\hline
\multicolumn{4}{c}{\textbf{Schedule: }$g(t)=t^2$} \\
\hline
\textbf{Method} & \textbf{FID} & \textbf{Method} & \textbf{FID} \\
\hline
$a=900,c=10$ & \textbf{7.26} & $a=100,c=10$ & 10.01 \\
$a=900,c=20$ & 7.46 & $a=25,c=1$   & 8.60\\
$a=900,c=30$ & 8.05 & $a=25,c=2$   & 8.65 \\
$a=100,c=1$  & 11.41& $a=25,c=3$   & 9.15 \\
$a=100,c=3$  & 7.77 & $a=25,c=4$   & 9.56 \\
$a=100,c=5$  & 7.73 & $a=25,c=5$   & 10.70\\
$a=100,c=7$  & 8.58 & FM (our impl.)    & 7.59 \\

\hline
\end{tabular}
\end{minipage}
\hfill
\begin{minipage}[t]{0.48\linewidth}
\centering
\vspace{0pt}%
\begin{tabular}{@{}p{2.6cm}p{0.8cm}|p{2.6cm}p{0.8cm}@{}}
\hline
\multicolumn{4}{c}{\textbf{Schedule: }$g(t)=t$} \\
\hline
\textbf{Method} & \textbf{FID} & \textbf{Method} & \textbf{FID} \\
\hline
$a=900,c=10$ & 17.63 &  $a=100,c=10$ & 39.93\\
$a=900,c=20$ & 42.46 &  $a=25,c=1$   & 8.31\\
$a=900,c=30$ & 64.60 & $a=25,c=2$   & \textbf{6.42} \\
$a=100,c=1$  & 11.64 & $a=25,c=3$   & 7.95 \\
$a=100,c=3$  & 8.56 & $a=25,c=4$   & 10.59 \\
$a=100,c=5$  & 14.66 & $a=25,c=5$   & 16.81 \\
$a=100,c=7$  & 24.44 & Diff (our impl.) & 82.27 \\
 
\hline
\end{tabular}
\end{minipage}
\caption{FID (100-step Euler) for \((a,c)\)-Kac under two time schedules.}
\label{fid_results}
\end{table}

\begin{figure}[ht!] 
  \centering
  \begin{minipage}[b]{0.3\textwidth}
    \centering
    \includegraphics[width=\linewidth]{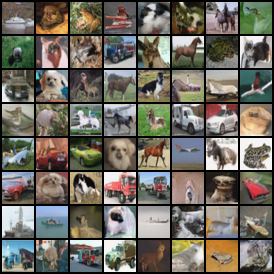}
    \par\smallskip
   {\small \( (a,c)=(900,10)\)}
  \end{minipage}
 \hfill
  \begin{minipage}[b]{0.3\textwidth}
    \centering
    \includegraphics[width=\linewidth]{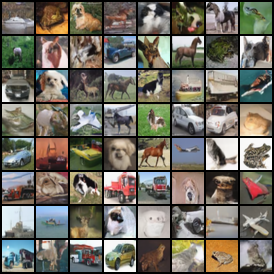}
    \par\smallskip
    {\small \( (a,c)=(900,20)\)}
  \end{minipage}
  \hfill
  \begin{minipage}[b]{0.3\textwidth}
    \centering
    \includegraphics[width=\linewidth]{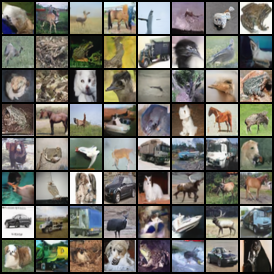}
    \par\smallskip
    {\small FM}
  \end{minipage}

  \caption{Generation results of the best performing mean-reverting Kac models and the diffusion baseline (flow matching) for $g(t)=t^2$. }
  \label{fig:cifar-1}
\end{figure}

\begin{figure}[ht!] 
  \centering
  \begin{minipage}[b]{0.3\textwidth}
    \centering
    \includegraphics[width=\linewidth]{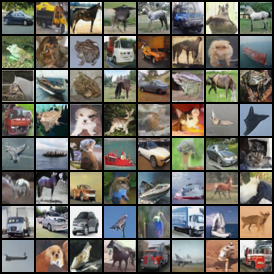}
    \par\smallskip
   {\small \( (a,c)=(25,2)\)}
  \end{minipage}
 \hfill
  \begin{minipage}[b]{0.3\textwidth}
    \centering
    \includegraphics[width=\linewidth]{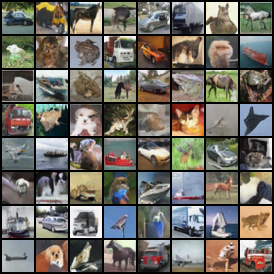}
    \par\smallskip
    {\small \( (a,c)=(25,3)\)}
  \end{minipage}
  \hfill
  \begin{minipage}[b]{0.3\textwidth}
    \centering
    \includegraphics[width=\linewidth]{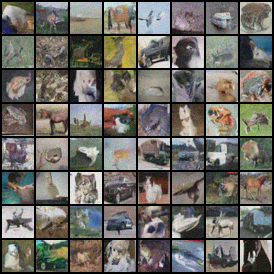}
    \par\smallskip
    {\small Diff}
  \end{minipage}

  \caption{Generation results of the best performing mean-reverting Kac models and the diffusion baseline for $g(t)=t$. }
  \label{fig:cifar-2}
\end{figure}

\section{Conclusions} \label{sec:conclusion}
This work introduces a novel generative model based on the damped wave equation (telegrapher's equation) and its stochastic counterpart, the Kac process. In contrast to diffusion models, the Kac process features a \emph{finite-speed} propagation and therefore \emph{bounded velocity fields}. We theoretically derive this regularity result, as well as an explicit formula for the conditional Kac velocity fields (Theorem \ref{velo_dirac}), which we employ in a flow matching framework for image generation. 

Future work can include more extensive studies of 
the effect of the model's damping and speed parameters $(a,c)$. The model's flexibility in adjusting these parameters (per component) may prove to be worthwhile within applications. The present work \cite{distillkac2025} demonstrates the applicability of our Kac framework to model distillation.
Further natural extensions of our model may include applications in conditional generation, inverse problems and sampling of Boltzmann distributions.

\paragraph{Acknowledgement.}
J. Chemseddine acknowledges funding  of the DFG project STE 571/17-2 within the SPP 2298 ``Theoretical Foundations of Deep Learning''.
The work of R. Duong, P. Friz and G. Steidl is
funded by the Deutsche Forschungsgemeinschaft (DFG, German Research Foundation) under Germany's Excellence Strategy -- The Berlin Mathematics Research Center MATH+ (EXC-2046/1, EXC-2046/2, project ID: 390685689). The work of P. Friz is also supported by DFG CRC/TRR 388, project A02. Our thanks go to Gregor Kornhardt, Matthias Liero, and Nicolaj Rux for fruitful discussions. We are especially grateful to Weiqiao Han for correcting an error in our implementation,
and to Flinn Raffael for correcting a mistake in the proof of Lemma \ref{cont_eq_decomposes}.

\bibliographystyle{abbrv}
\bibliography{bibliography}

\appendix
\section{Appendix: Additional Material}\label{sec:appendix}
\begin{figure}[H]
    \centering
    \includegraphics[width = 0.22\textwidth]{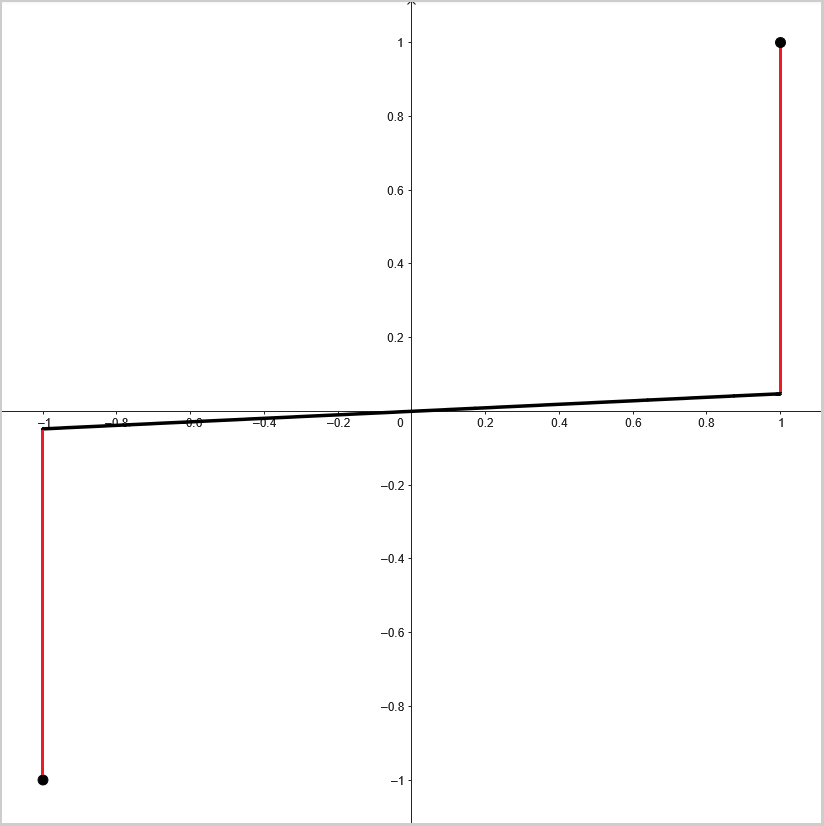} \hspace{2mm}
    \includegraphics[width = 0.22\textwidth]{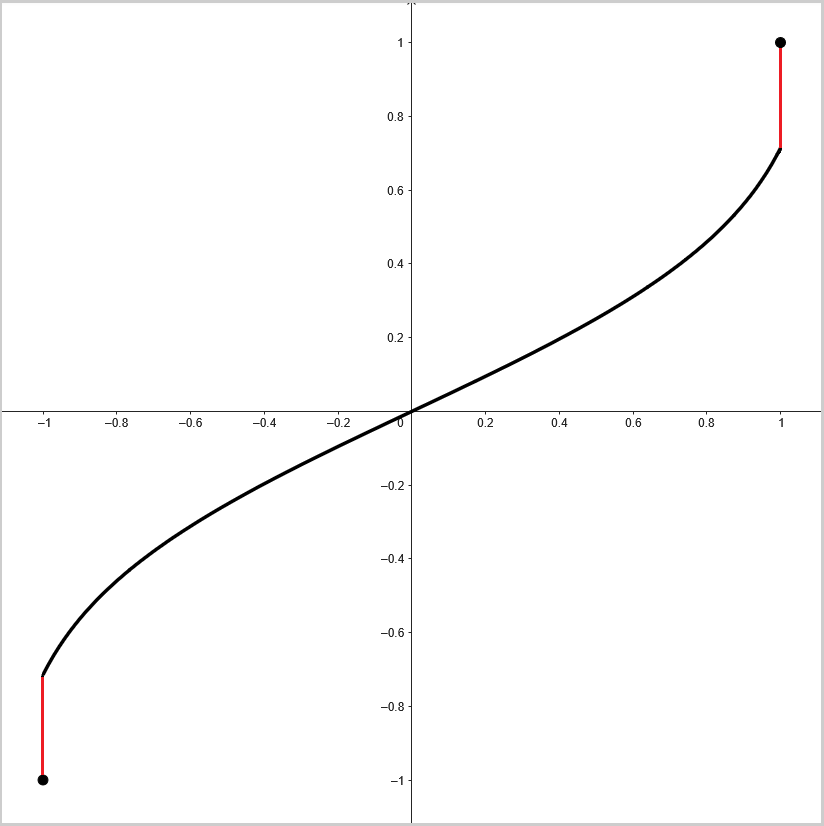} \hspace{2mm}
    \includegraphics[width = 0.22\textwidth]{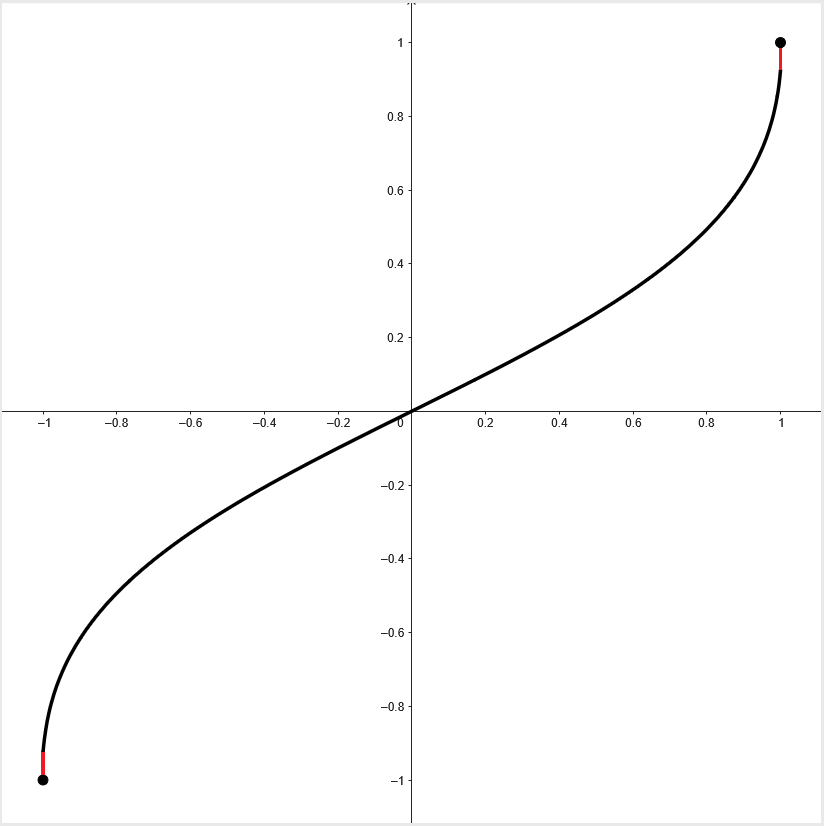} \hspace{2mm}
    \includegraphics[width = 0.22\textwidth]{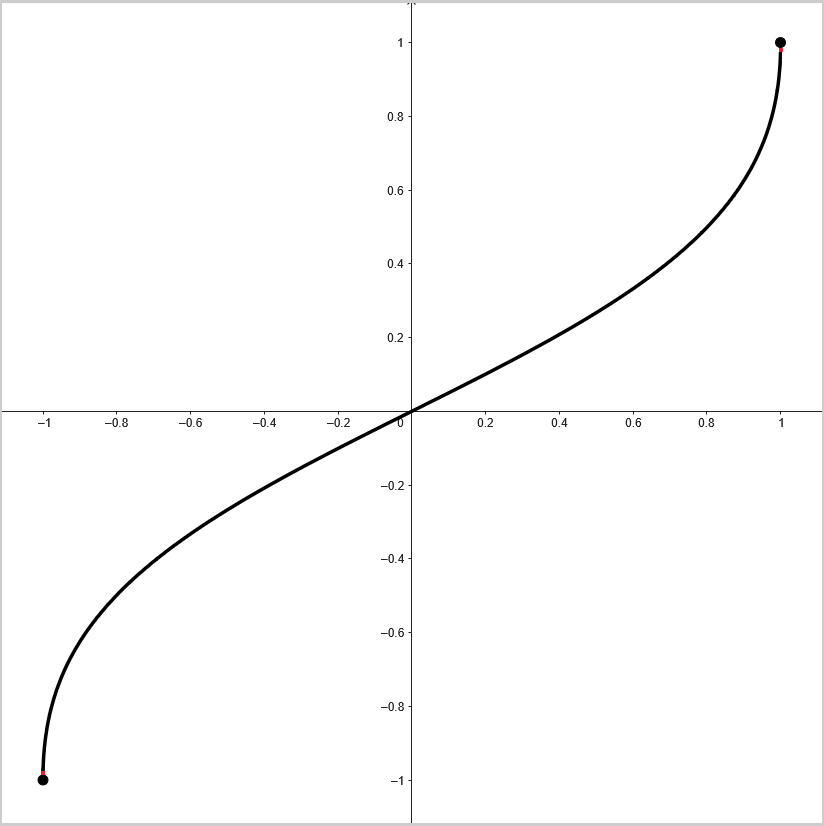} 
    \vspace{8mm}

    \hspace{-0mm}
    \includegraphics[width = 0.19\textwidth]{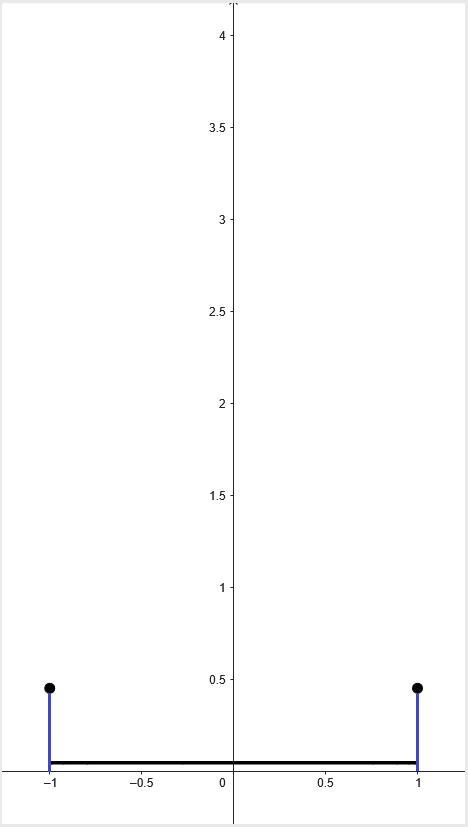} \hspace{7mm}
    \includegraphics[width = 0.19\textwidth]{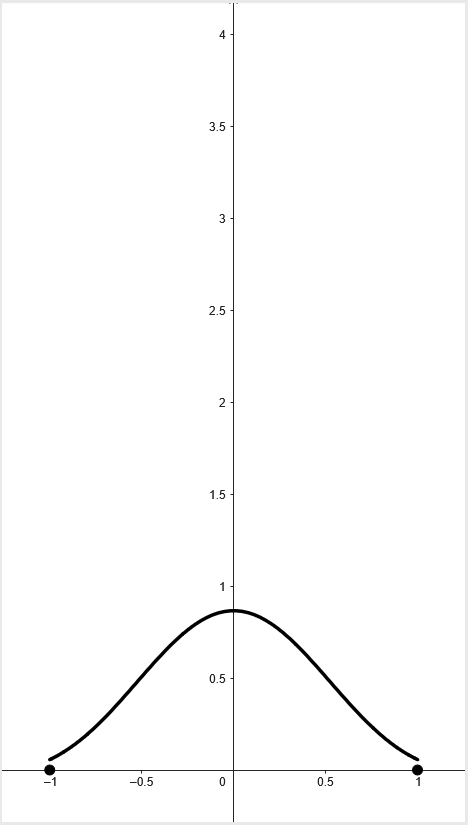} \hspace{7mm}
    \includegraphics[width = 0.19\textwidth]{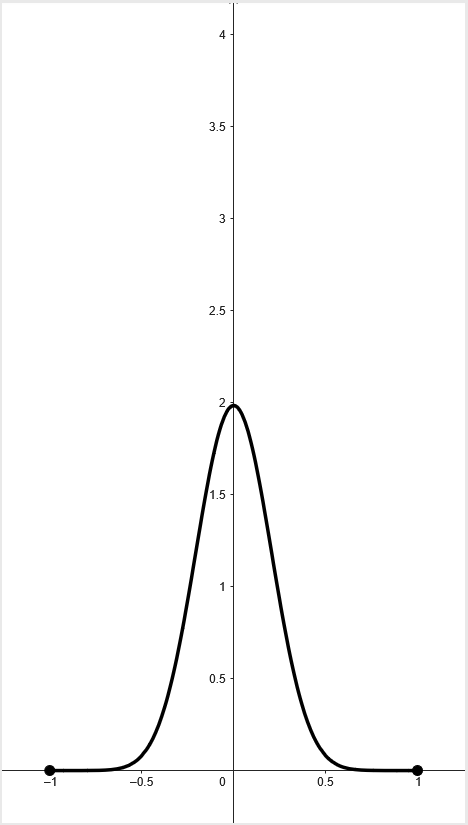} \hspace{8mm}
    \includegraphics[width = 0.19\textwidth]{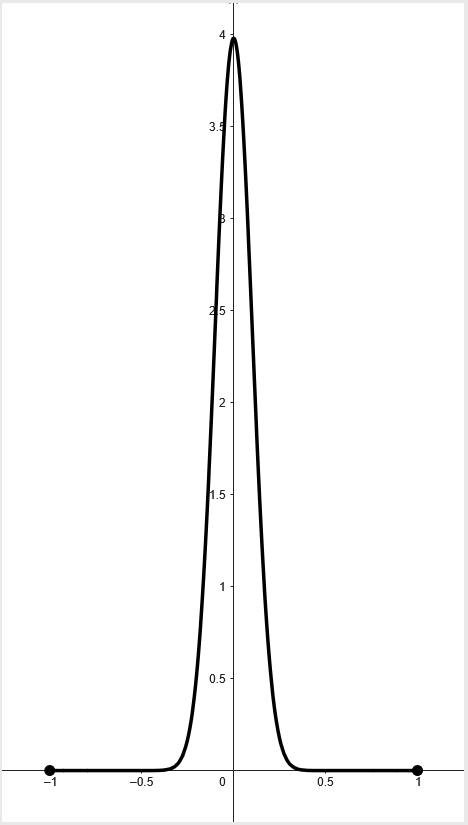}
       
    \caption{\emph{Top row:} 1D velocity fields of the latent variable $K_T$ for $T=1$, fixed $c=1$, and various dampings $a= \frac{1}{10}, 5, 25, 100$ (left to right). The red lines symbolize exploding Lipschitz constants of the velocity fields at times $t$ close to $T$. Higher values of $a$ seem to have a regularizing effect. \\
    \emph{Bottom row:} Corresponding probability distributions. For low dampings $a \approx 0$, the singularities of the velocity field are weighted by masses highly concentrated at the boundary.
    For higher dampings $a$, the mass becomes more concentrated at the origin where the velocitiy field is smooth.}
    \label{role-of-a}
\end{figure}


\subsection{Lipschitz property of the Mean-reverting Kac process}\label{app:lipschitz}

Let us consider the $d$-dimensional versions $\mathbf{X}_0 \coloneqq (X_0^1, \dots, X_0^d)$ and $\mathbf{M}_t \coloneqq (M_t^1, \dots, M_t^d)$ of the data and the mean-reverting Kac process \eqref{eq:mean-rev-kac}. Only for notational simplicity, we assume the same scheduling functions, damping and speed parameters for each component, meaning that $\mathbf{M}_t = f(t) \mathbf{X}_0 + \mathbf{K}_{g(t)}$. 
Similar to Proposition \ref{prop:lipschitz}, we have the following Lipschitz property.

\begin{proposition}\label{prop:lipschitz_2}
   Let $P_{\mathbf{X}_0} \in \mathcal{P}_2(\R^d)$, and let $f,g: [0,1] \to \R$ be Lipschitz continuous with constants $C_{f}, C_{g} > 0$, respectively. 
   Then, for the probability distribution flow  $(\mu_t)_{t \in [0,1]} \coloneqq (P_{\mathbf{M}_t})_{t \in [0,1]}$ of the $d$-dimensional mean-reverting Kac process $\mathbf{M}_t$, the following holds true:
   \begin{itemize}
    \item[i)]
    $(\mu_t)_{t \in [0,1]}$ is Lipschitz continuous in the Wasserstein space $(\mathcal{P}_2(\R^d),W_2)$ with Lipschitz constant 
    $\sqrt{2 C_f^2 \, \|\mathbf{X}_0\|_{L_2(\mathbb{P})}^2 + 2 C_g^2 \, d c^2}$, and for $\mathbb{P}$-a.e.\ 
    $\omega$, $\mathbf{X}_t(\omega)$ is Lipschitz continuous in $\R^d$ with the constant $\sqrt{2 C_{f}^2 \, \|\mathbf{X}_0(\omega)\|^2 + 2 C_{g}^2 \, d c^2}$.
    \item[ii)] $(\mu_t)_{t \in [0,1]}$ is an absolutely continuous curve in $AC^2([0,1];\mathcal P_2(\R^d))$, and admits a velocity field fulfilling
     \begin{equation}\label{kac-boundedness_2}
    \|v_t\|_{L_2(\mu_t)}^2  \le 2 C_f^2 \, \|\mathbf{X}_0\|_{L_2(\mathbb{P})}^2 + 2 C_g^2 \, d c^2 \quad \text{ for a.e. } t \in [0, 1].
    \end{equation}
    \item[iii)] 
    If $|\mathbf{X}_0|$ is bounded by some $R > 0$ almost surely, then $(\mu_t)_{t \in [0,1]}$ admits a velocity field which satisfies the uniform (in time and space) bound 
    \begin{equation}\label{kac-boundedness_3_uniform}
      |v_t(x)| \le C_f R + C_g d^\frac{1}{2} c  \quad \text{for all } x \in \R^d, \, t \ge 0.
    \end{equation} 
   \end{itemize}
The assumptions on $f,g$ are in particular fulfilled by linear/quadratic schedules $f(t) \coloneqq 1-t$ and $g(t) \coloneqq t^2$, or $g(t) \coloneqq t$.
\end{proposition}

\begin{proof}
i)  
    Fix $0 \le s < t \le 1$. Applying the estimate \eqref{lipschitz-estimate} and Young's inequality, we obtain for $\mathbb{P}$-a.e. $\omega$ that
    \begin{align}\label{lipschitz-estimate_2}
        \|\mathbf{M}_t(\omega) - \mathbf{M}_s (\omega)\|^2
        &= \|(f(t)-f(s)) \mathbf{X}_0(\omega) +    \mathbf{K}_{g(t)} (\omega)- \mathbf{K}_{g(s)} (\omega)\|^2 \\
        &\le 2 C_{f}^2 |t-s|^2 \|\mathbf{X}_0(\omega)\|^2 + 2 d  c^2 C_{g}^2  |t-s|^2.
    \end{align}
    This shows that for $\mathbb{P}$-a.e.\ $\omega$, $\mathbf{M}_t(\omega)$ is Lipschitz continuous in $\R^d$ with $\omega$-dependent Lipschitz constant
    $\sqrt{2 C_{f}^2 \, \|\mathbf{X}_0(\omega)\|^2 + 2 C_{g}^2 \, d c^2}$.

    Now, we
    consider the canonical transport plan  $\pi : \R^d \times \R^d \to \R$ defined by
    $\pi(B) := 
    \mathbb{P} \big( (\mathbf{M}_s, \mathbf{M}_t) \in B \big)$
    for all Borel measurable $B \in \mathcal B(\R^d \times \R^d)$.
    Indeed, it holds $\pi(A \times \R^d) = P_{\mathbf{M}_s}(A)$ 
    and 
    $\pi(\R^d \times A) = P_{\mathbf{M}_t}(A)$ 
    for all $A \in  \mathcal B(\R^d)$.
    Furthermore, we have
    $\pi \in \mathcal{P}_2(\R^d \times \R^d)$:
     it holds that
    \begin{align}
       &\int_{\R^d \times \R^d} \|(x,y)\|^2 \d \pi(x,y) 
       = \int_{\Omega} \|(f(s)\mathbf{X}_0 + \mathbf{K}_{g(s)}, f(t)\mathbf{X}_0 + \mathbf{K}_{g(t)})\|^2 \d \mathbb{P}\\
       &\le \int_{\Omega} 2 \|f\|_\infty^2 \|\mathbf{X}_0\|^2 + 2\|\mathbf{K}_{g(s)}\|^2 + 2 \|f\|_\infty^2 \|\mathbf{X}_0\|^2 + 2\|\mathbf{K}_{g(t)}\|^2\d \mathbb{P}  \\
       &\le \int_{\Omega} 2 \|f\|_\infty^2 \|\mathbf{X}_0\|^2 + 2 d c^2 \|g\|_\infty^2 + 2 \|f\|_\infty^2 \|\mathbf{X}_0\|^2 + 2 d c^2 \|g\|_\infty^2 \d \mathbb{P}  ~ <  \infty,
    \end{align}
    since $P_{\mathbf{X}_0} \in \mathcal{P}_2(\R^d)$, and by using \eqref{lipschitz-estimate} with $s=0$ or $t=0$, respectively.  
    Hence, $\pi$ is an admissible transport plan between $\mu_s$ and $\mu_t$.
    
    Then, we conclude by \eqref{lipschitz-estimate_2} that
    \begin{align}
        W_2^2(\mu_s, \mu_t) 
        &\le 
        \int_{\R^d \times \R^d} \|x-y\|^2 \d \pi(x,y)
        = \int_{\Omega} \|\mathbf{M}_s - \mathbf{M}_t\|^2 \d \mathbb{P}\\
        &\le \left(2 C_f^2 \|\mathbf{X}_0\|_{L_2(\mathbb{P} )}^2 + 2 d c^2 C_g^2 \right)  |t-s|^2 .
    \end{align}
    Hence, $(\mu_t)_{t \in [0,1]}$ is Lipschitz continuous in $\mathcal{P}_2(\R^d)$ with constant 
    $(2 C_f^2 \, \|\mathbf{X}_0\|_{L_2(\mathbb{P})}^2 + 2 C_g^2 \, d c^2)^\frac{1}{2}$.
    \\[1ex]
    ii) By definition \eqref{eq:abs_cont} and Part i), we see immediately that $(\mu_t)_{t \in [0,1]}$ is an absolutely continuous curve. It then follows from \cite[Theorem 8.3.1]{BookAmGiSa05} that there exists an (optimal) velocity field in the continuity equation fulfilling \eqref{kac-boundedness_2}.  \\[1ex]
    iii) Note that the conditional velocity field \eqref{eq:velo-OU} satisfies the bound \eqref{kac-boundedness_3_uniform}. Now, the formulas \eqref{velo-FM} and \eqref{total_prob} applied to the process $\mathbf{M}_t$ yield the result.
\end{proof}

\begin{remark}[Comparison to bounds for diffusion]
1. The uniform Kac bound \eqref{kac-boundedness_3_uniform} is better than the growth estimate which can be obtained for the stochastic interpolant \eqref{eq:rt-def}: denoting its velocity field by $v_t^{\rm diff}$, it is known that 
 \begin{equation}
    |v_t^{\rm diff} (x)| < C_{t,1} |x| + C_{t,2},  \quad t \in (0,1),
  \end{equation}
where $C_{t,1}, C_{t,2}$ explode for $t$ close to inital (data) time, see \cite[Appendix C.1]{chen2025scaleadaptivegenerativeflowsmultiscale}. The example \eqref{velo-ind} shows that this cannot be improved to a uniform bound as in \eqref{kac-boundedness_3_uniform}. \\[0.5ex]
2. A bound on the norm $\|v_t\|_{L_2(\mu_t)}$ similar to \eqref{kac-boundedness_2} can also be obtained for the stochastic interpolant \eqref{eq:rt-def}, assuming that $f$ and $\sqrt{g}$ are Lipschitz continuous. Therefore, the choice $g(t) \coloneqq t$ is \emph{not} admissible for the diffusion case, but valid for the Kac framework. Table \ref{fid_results} exactly reflects this, and interestingly, the Kac model achieves its best FID result for the schedule $g(t) = t$ and $(a,c) = (25,2)$.
\end{remark}


\subsection{Rigorous proof of the decomposition Lemma \ref{cont_eq_decomposes}}\label{sect:decomposition-lemma}
The decomposition Lemma \ref{cont_eq_decomposes} is commonly known in the machine-learning literature. Yet, we want to give a mathematically rigorous proof of it here. Although the formal derivation of Lemma \ref{cont_eq_decomposes} is trivial, it relies on the assumption that the corresponding probability densities exist, and are smooth. Since this is not the case in our setting (recall that the Kac probability distributions \eqref{telegraph-formula-1d-dirac} admit singular Dirac points), we need to treat the continuity equation \eqref{eq:ce} in the weak formulation.

To this end, fix the dimension $d \ge 1$ and recall that the continuity equation \eqref{eq:ce} is said to 
hold on $(0,T) \times \R^d$ in the \emph{sense of distributions}, if it holds
\begin{equation} \label{eq:CE_distr}
    \int_0^T \int_{\R^d} \partial_t \psi(t, x) + v_t(x) \cdot \nabla_x \, \psi(t, x) \d \mu_t(x) \d{t}
    = 0
\end{equation}
for all $\psi \in C_{\mathrm c}^\infty((0,T) \times \R^d)$. Here, for an open set $D \subset \R^l$, the set $C_{\mathrm c}^\infty(D)$ denotes the space of all infinitely diffentiable functions on $\R^l$ which are compactly supported on $D$.

Define the tensor product space $C_{\mathrm c}^\infty((0,T)) \otimes \bigotimes_{i=1}^d C_{\mathrm c}^\infty(\R)$ as the linear span of functions
\begin{align}\label{tensor-prod-function}
\psi: (0,T) \times \R^d \to \R, \quad (t, x_1, \dots, x_d) \mapsto \psi_0(t) \prod_{i=1}^d \psi_i(x_i),
\end{align}
with functions $\psi_0 \in C_{\mathrm c}^\infty((0,T)), \, \psi_i \in C_{\mathrm c}^\infty(\R), \, i=1,\dots, d$.

By the following well-known result, this subspace is dense in $C_{\mathrm c}^\infty((0,T) \times \R^d)$.
\begin{lemma}\label{lem:tensor-dense}
    The space $C_{\mathrm c}^\infty((0,T)) \otimes \bigotimes_{i=1}^d C_{\mathrm c}^\infty(\R)$ is dense in $C_{\mathrm c}^\infty((0,T) \times \R^d)$ in the following sense: 
    for all $\psi \in C_{\mathrm c}^\infty((0,T) \times \R^d)$, there exists
    a compact set $K \subset (0,T) \times \R^{d}$ containing $\supp \, \psi$, and
    a sequence $(\psi_n) \subset C_{\mathrm c}^\infty((0,T)) \otimes \bigotimes_{i=1}^d C_{\mathrm c}^\infty(\R)$ compactly supported on $K$, such that $D^\alpha \psi_n$ converges uniformly to $D^\alpha \psi$ on $K$ for any $\alpha \in \N_0^{d+1}$.
\end{lemma}
\begin{proof}
    Fix $\psi \in C_{\mathrm c}^\infty((0,T) \times \R^d)$. Since $\psi$ is compactly supported on $(0,T) \times \R^d$, there exist compact sets $A_0 \subset (0,T), A_i \subset \R, \, i=1,\dots, d$, such that
    \begin{equation}
        \supp \, \psi \subset \overset{\circ}{A} \subset  A \coloneqq A_0 \times \dots \times A_d.
    \end{equation}
    Now, consider bump functions $\xi_0, \dots, \xi_d \in C_{\mathrm c}^\infty(\R)$ such that 
    \begin{equation}\label{constant-one}
        \xi_i \equiv 1 \quad \text{on } A_i, \, i=0,\dots,d.
    \end{equation}
    Defining $K_i \coloneqq \supp \, \xi_i, ~ i=0,\dots,d$, we see that $K \coloneqq K_0 \times \dots \times K_d$ is a compact set containing $\supp \, \psi$.
    Note that $\xi_0$ can be chosen such that $K_0 \subset (0,T)$, i.e. $\xi_0 \in C_{\mathrm c}^\infty((0,T))$. 
    Since $K$ is compact, the Stone-Weierstrass theorem yields a sequence of polynomials $(p_n)$ on $(0,T) \times \R^{d}$ such that $D^\alpha p_n$ converges uniformly to $D^\alpha \psi$ on $K$ for all $\alpha \in \N_0^{d+1}$. 

    Next, define the functions
    \begin{equation}
        \psi_n (t, x) \coloneqq
        \xi_0(t) \xi_1(x_1) \dots \xi_d(x_d) p_n(t,x), \quad t \in (0,T), ~ x= (x_1, \dots, x_d) \in \R^d.
    \end{equation}
    Since $p_n$ is a polynomial, it can be written as
    \begin{equation}
       p_n(t,x) = \sum_{i = (i_0, \dots, i_d) \in \N_0^{d+1}, |i| \le m_n} c_i \, t^{i_0} x_1^{i_1} \dots x_d^{i_d} 
    \end{equation}
    with $m_n$ being the degree of $p_n$.
    Hence, the functions $\psi_n$ are linear combinations of tensor product functions \eqref{tensor-prod-function}, i.e. $\psi_n \in C_{\mathrm c}^\infty((0,T)) \otimes \bigotimes_{i=1}^d C_{\mathrm c}^\infty(\R)$. By construction, $\psi_n$ is compactly supported on $K$.

    Fix an arbitrary $\alpha \in \N_0^{d+1}$.
    By \eqref{constant-one}, we have $\psi_n \equiv p_n$ on $A$, hence, $D^\alpha \psi_n \equiv D^\alpha p_n$ in the interior of $A$ which contains $\supp \, \psi$. By construction, $D^\alpha \psi_n$ converges uniformly to $D^\alpha \psi$ on $\supp \, \psi$. 
    On $K \setminus \supp \, \psi$, we have $\psi \equiv 0$. By construction, $D^\beta p_n$ converges uniformly to $0$ on $K \setminus \supp \, \psi$ for any $\beta \in \N_0^{d+1}$. By the Leibniz product rule and the boundedness of any derivatives of $\xi_0, \dots, \xi_d$, it follows that also $D^\alpha \psi_n$ converges uniformly to $0 = D^\alpha \psi$ on $K \setminus \supp \, \psi$, and in total, on $K$.\footnote{Since it holds $D^\alpha \psi_n \equiv D^\alpha \psi \equiv 0$ outside of $K$, the uniform convergence of the derivatives actually holds on the whole domain $(0,T) \times \R^d$.}
\end{proof}

Furthermore, we need the following handy proposition, which can be found in \cite[Proposition 16.3]{Ambrosio2021LecturesOO}. It allows for a more manageable treatment of the weak continuity equation \eqref{eq:CE_distr}.
\begin{proposition}\label{prop:useful}
    Let $\mu_t : (0,T) \to \mathcal{P}(\R^d)$ be a narrowly continuous curve of probability measures and let $v_t$ be a vector field such that $v_t \in L_1(\mu_t)$ and $\|v_t\|_{L_1(\mu_t)} \in L_1(0,T)$. Then the following are equivalent:
    \begin{itemize}
        \item [(i)] the curve $\mu_t$ solves \eqref{eq:CE_distr}.
        \item [(ii)] for any $\psi \in C_{\mathrm c}^\infty(\R^d)$, it holds that $t \mapsto \int_{\R^d} \psi(x) \d \mu_t(x)$ is absolutely continuous on $[0,T]$ and its derivative is $\int_{\R^d} \nabla \psi(x) \cdot v_t(x) \d \mu_t(x)$.
    \end{itemize}
\end{proposition}

Finally, we are able to rigorously prove Lemma \ref{cont_eq_decomposes}.
\begin{proof}[Proof of Lemma \ref{cont_eq_decomposes}]
Take any smooth test function $\psi$ of the form \eqref{tensor-prod-function}. Then, using the factorization assumption \eqref{product-decomposition} we have
\begin{align}
    \int_0^T \int_{\R^d} \partial_t \psi(t, x) \d \mu_t(x) \d{t}
    =  \int_0^T \partial_t \psi_0(t) \prod_{i=1}^d \left( \int_{\R} 
       \psi_i(x_i)   \d \mu_t^i(x_i) \right) \d{t}.
\end{align}
Since by assumption, the 1D flows $\mu_t^i$ satisfy the continuity equation \eqref{decomposed-CE} in the distributional sense, Proposition \ref{prop:useful}, $(i) \Rightarrow (ii)$, yields that the functions $t \mapsto \int_{\R} \psi_i(x_i) \d \mu_t^i(x_i)$ are absolutely continuous with weak (and strong a.e.) derivative $\int_{\R} v_t^i(x_i) \frac{\d}{\d x_i} \psi_i(x_i) \d \mu_t^i(x_i)$.

By integration by parts and the product rule for absolutely continuous functions, it follows
\begin{align}
    &= - \int_0^T \psi_0(t) ~ \partial_t \prod_{i=1}^d \left( \int_{\R} 
       \psi_i(x_i)   \d \mu_t^i(x_i) \right) \d{t}\\
    &= - \sum_{i=1}^d \int_0^T \psi_0(t) ~ \left( \int_{\R} v_t^i(x_i) \frac{\d}{\d x_i} \psi_i(x_i) \d \mu_t^i(x_i)\right) \prod_{j \ne i}^d \left( \int_{\R} 
       \psi_j(x_j)   \d \mu_t^j(x_j) \right) \d{t}\\   
    &= - \sum_{i=1}^d \int_0^T \int_{\R^d} v_t^i(x_i) \frac{\partial}{\partial x_i} \psi(t, x) \d \mu_t(x) \d{t}\\   
    &= -\int_0^T \int_{\R^d}  v_t(x) \cdot \nabla_x \, \psi(t, x) \d \mu_t(x) \d{t},
\end{align}
where the vector field $v_t$ is given by \eqref{velo-decomposition}. Note that $v_t$ satisfies the integrability \eqref{eq:explode_1} by the assumptions $\mu^i_t \in AC^2((0,T);\allowbreak \mathcal P_2(\R))$.
Hence, above equations also hold true for any linear combination of test functions $\psi$ given by \eqref{tensor-prod-function}. Finally, by Lemma \ref{lem:tensor-dense} and Lebesgue's dominated convergence we infer the continuity equation in the variational formulation \eqref{eq:CE_distr} for all $\psi \in C_{\mathrm c}^\infty((0,T) \times \R^d)$.
\end{proof}

Note that our general proof does \emph{not} make any use of smoothness or Markov properties of some underlying stochastic process, but solely works on the level of probability flows and continuity equations, where the smoothness requirement is shifted to the test functions.



\subsection{Simulating the Kac Process} \label{app:simulating_kac}
\begin{figure}[ht!] \label{kac_walk_a4c2}
    \centering
    \includegraphics[width = 0.18\textwidth]{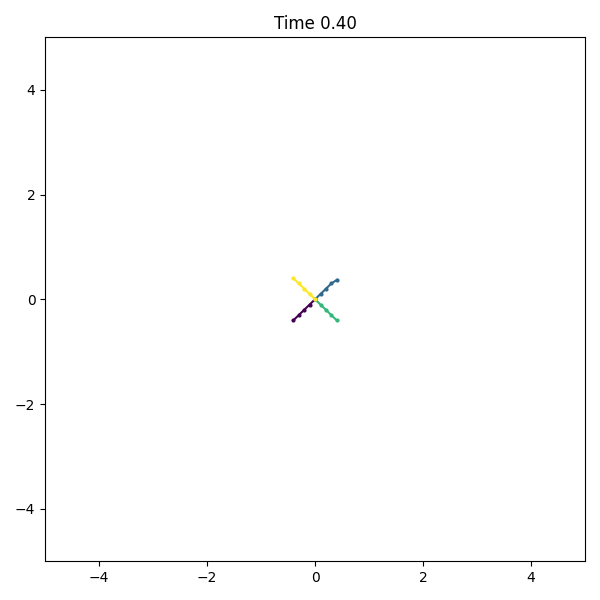}
    \includegraphics[width = 0.18\textwidth]{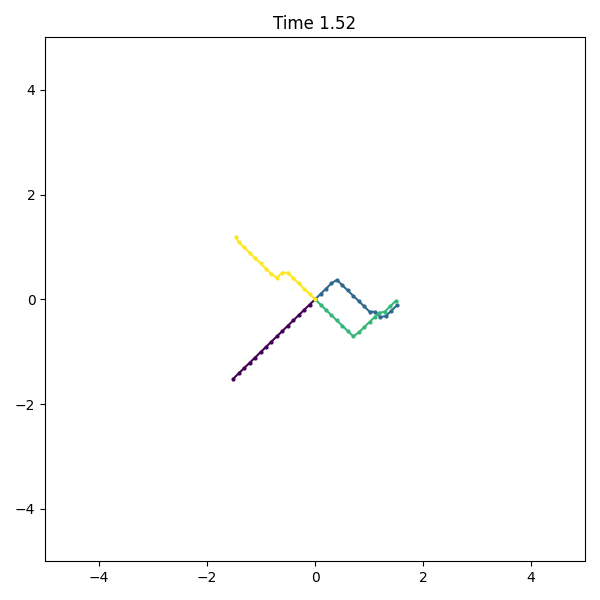}
    \includegraphics[width = 0.18\textwidth]{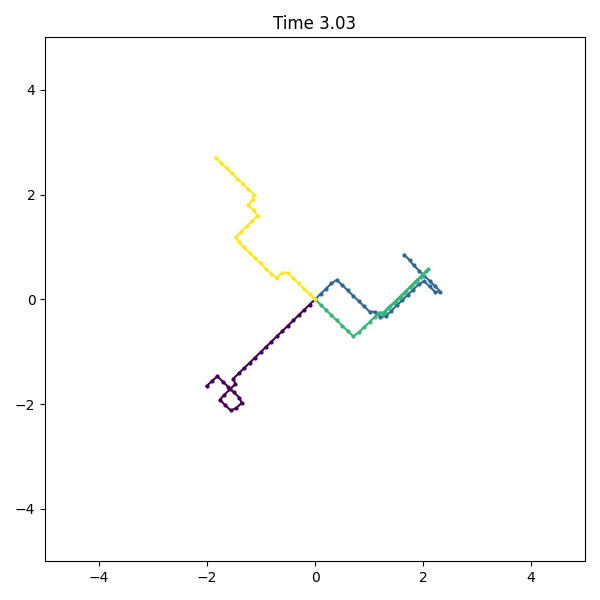}
    \includegraphics[width = 0.18\textwidth]{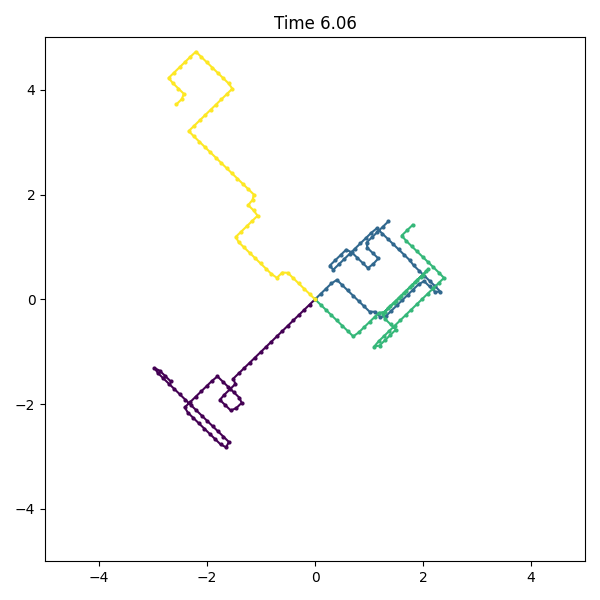}
    \includegraphics[width = 0.18\textwidth]{imgs/KAC_WALK_2D/a1c1_walk/frame_099.png}
    
    \includegraphics[width = 0.18\textwidth]{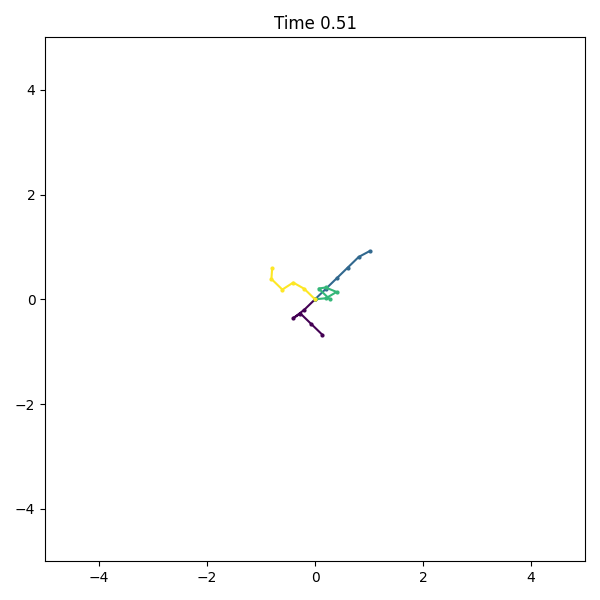}
    \includegraphics[width = 0.18\textwidth]{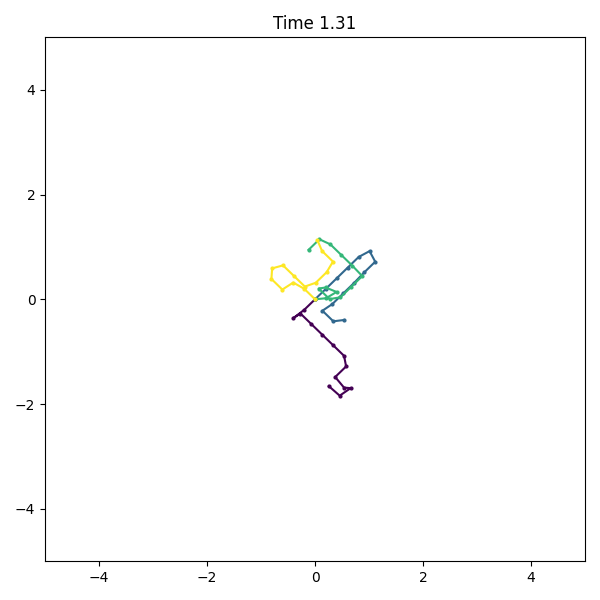}
    \includegraphics[width = 0.18\textwidth]{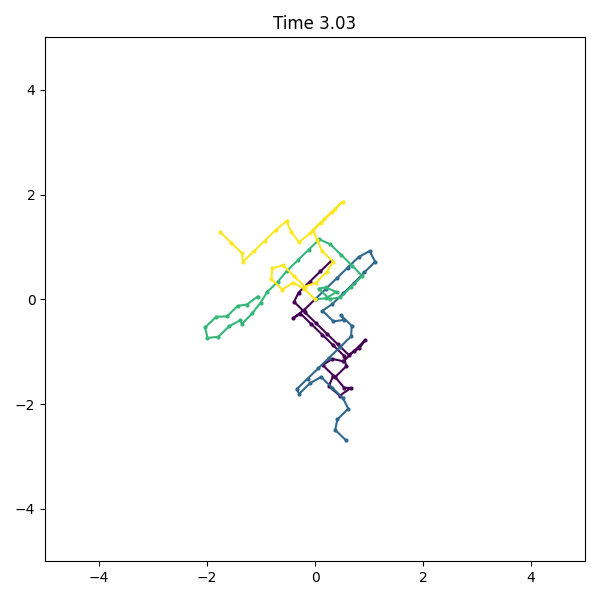}
    \includegraphics[width = 0.18\textwidth]{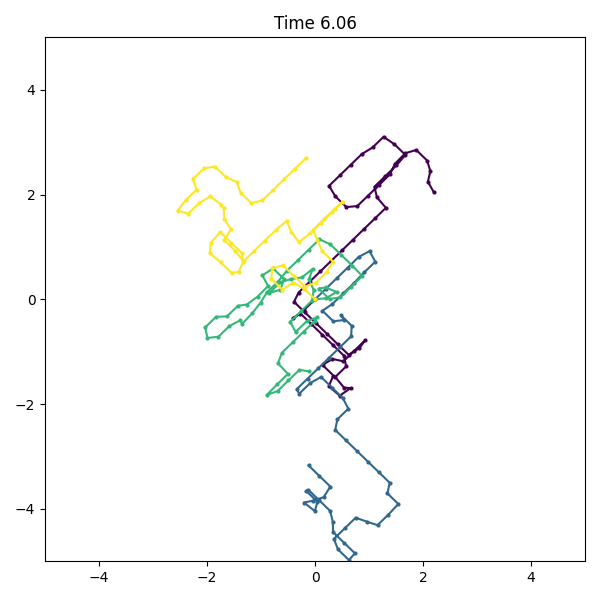}
    \includegraphics[width = 0.18\textwidth]{imgs/KAC_WALK_2D/a4c2_walk/frame_099.png}

    \includegraphics[width = 0.18\textwidth]{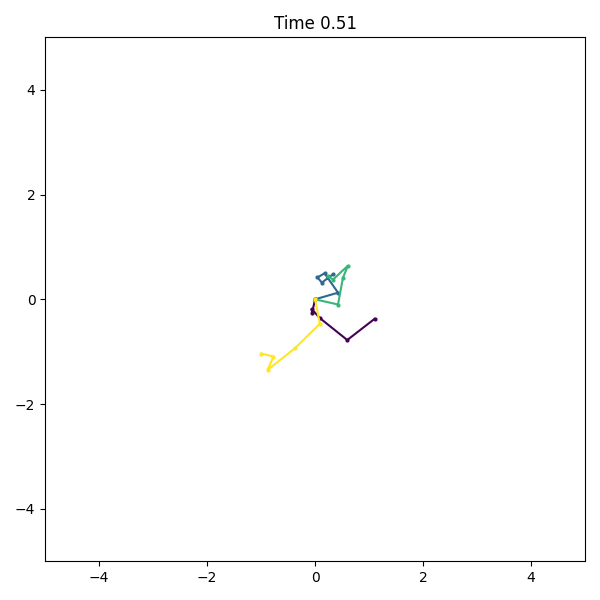}
    \includegraphics[width = 0.18\textwidth]{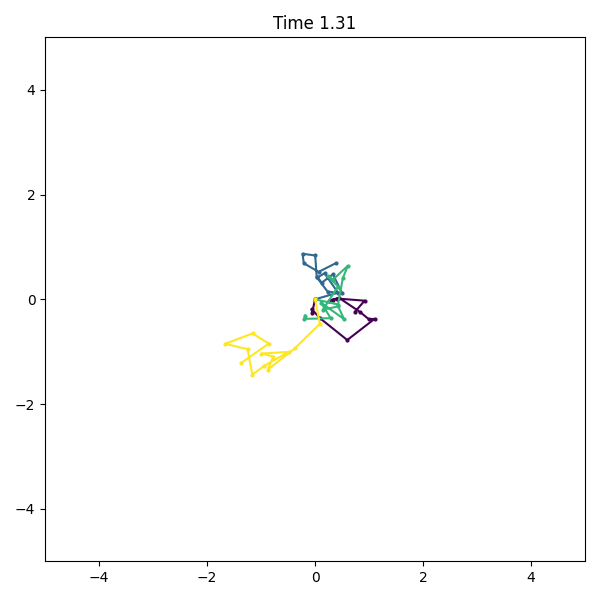}
    \includegraphics[width = 0.18\textwidth]{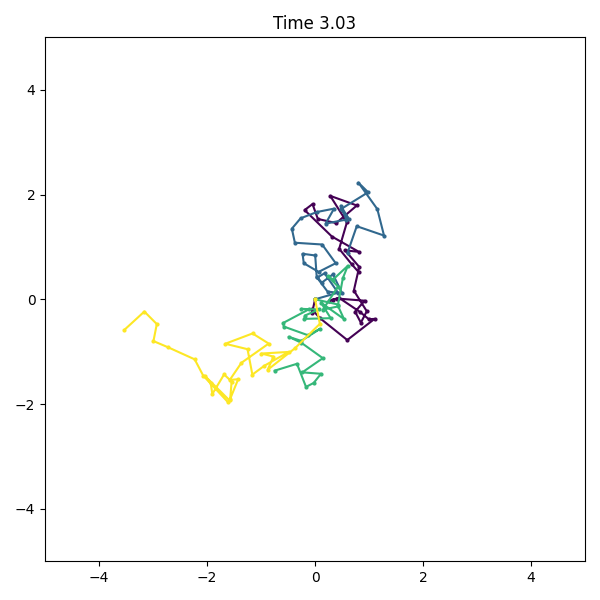}
    \includegraphics[width = 0.18\textwidth]{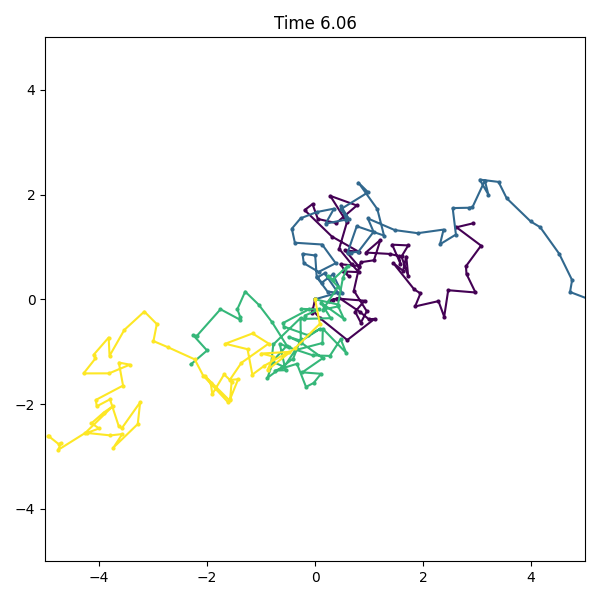}
    \includegraphics[width = 0.18\textwidth]{imgs/KAC_WALK_2D/a25c5_walk/frame_099.png}
       
    \caption{Paths of the componentwise Kac walk in 2D, simulated until time $T=10$ with $(a,c)=(1,1)$ (upper row), $(a,c) = (4,2)$ (middle row) and $(a,c) = (25,5)$ (lower row).}
\end{figure}

\begin{figure}[ht!] \label{brownian_walk_2d}
    \centering
    \includegraphics[width = 0.18\textwidth]{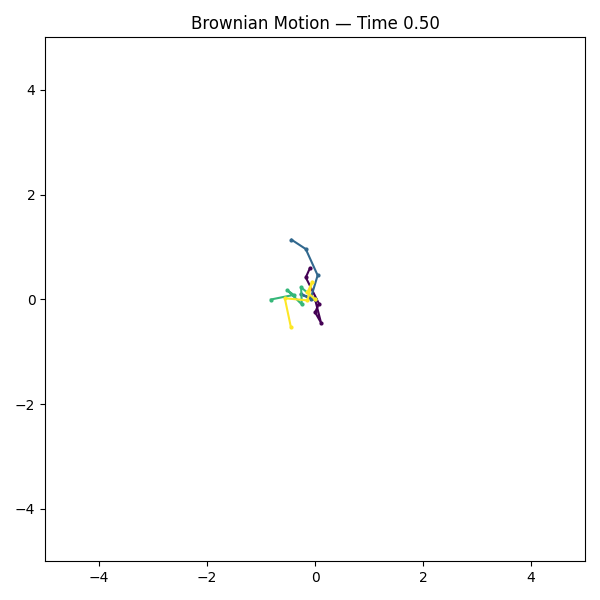}
    \includegraphics[width = 0.18\textwidth]{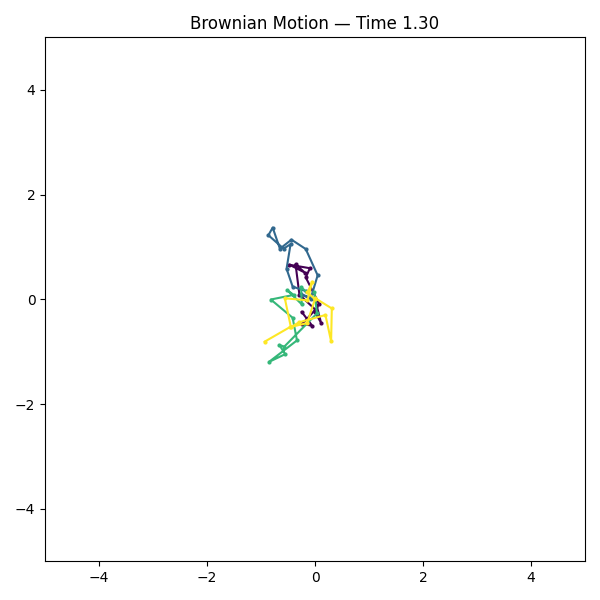}
    \includegraphics[width = 0.18\textwidth]{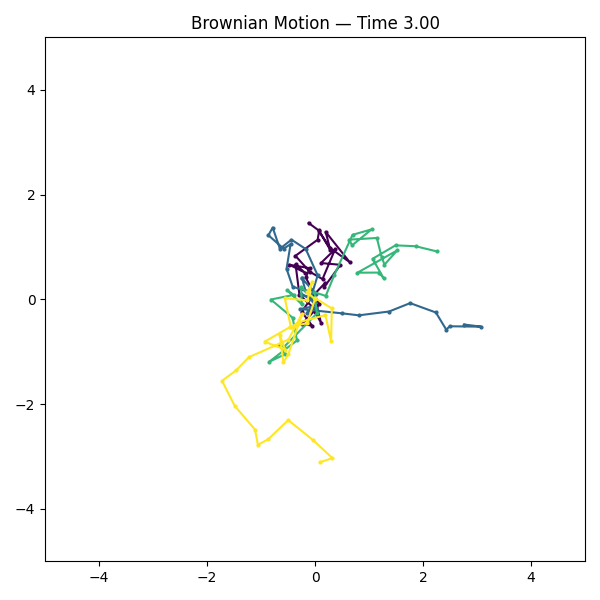}
    \includegraphics[width = 0.18\textwidth]{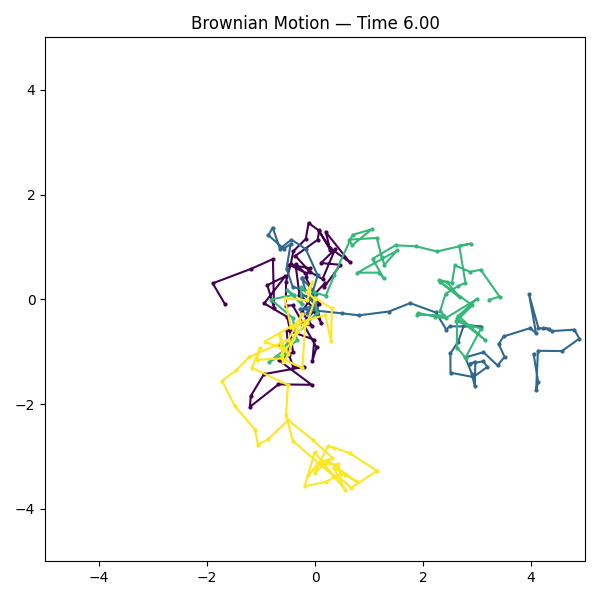}
    \includegraphics[width = 0.18\textwidth]{imgs/KAC_WALK_2D/brownian/frame_099.png}
       
    \caption{Paths of a standard Brownian motion in 2D, simulated until time $T=10$.}
\end{figure}

Recall that the Kac process $K_t$ starting in $0$ is given by 
\begin{align*}
    K(t) :=  {\rm B}_{\frac{1}{2}} c \,\tau_t,
\end{align*}
therefore we need to sample from the random times $\tau_t$. Since the process is i.i.d.\ we outline the procedure for the one-dimensional case, in higher dimensions the same procedure is done component-wise.

We follow the approach proposed in \cite{ZYM2018}. Let $N(t)$ be a Poisson process with rate $a > 0$, recall that $\tau_t = \int_0^t (-1)^{N(s)} \d s$. 
Suppose that the times at which the Poisson process jumps are given by $0 < j_1 < j_2 < \dots < j_{N(t)} < t$. Then $\tau_t$ can be expressed exactly as
\[
\tau_t = j_1 + \sum_{k=1}^{N(t)-1} (-1)^k (j_{k+1} - j_k) + (-1)^{N(t)} (t - j_{N(t)}).
\]
where we define $j_0 := 0$.
It is well known that the waiting times between jumps $s_k:=(j_{k+1} - j_k)$ are exponentially distributed with parameter $a$. We can therefore sample $s_k$ and compute the corresponding sum by determining the instance when their cumulative sum exceeds $t$.

To evaluate this quantity efficiently, we begin by sampling a fixed number of i.i.d.\ exponential random variables \( s_k \sim \mathrm{Exp}(a) \) for \( k = 1, \dots, \kappa \), where \( \kappa \) is a sufficiently large upper bound. We then compute the cumulative sums \( S_k = \sum_{j=1}^k s_j \).
Given a target time \( t \), we identify the smallest index \( n \) such that \( S_n > t \), and compute
\[
\tau_t = \sum_{k=1}^{n-1} (-1)^{k-1} s_k + (-1)^{n - 1}(t - S_{n-1}).
\]
In practice, we set \( \kappa \coloneqq \lceil 2a T \rceil + b \), where \( T \) is the maximum time of interest and \( b \) is a small buffer constant (e.g., \( 20 \)) to ensure sufficient coverage with high probability.

This procedure can be applied independently for multiple values of \( t \), allowing efficient simulation of the Kac process, see Algorithm \ref{simulating}.

\begin{algorithm}[ht!] 
\caption{Simulating the Kac process \texorpdfstring{$\tau_t$}{tau\_t}}
\begin{algorithmic}[1]
\STATE \textbf{Input:} Time \( t > 0 \), rate \( a > 0 \), upper bound \( T \), buffer \( b \)
\STATE Set \( \kappa \leftarrow \lceil 2aT \rceil + b \)
\STATE Sample \( s_1, \dots, s_\kappa \sim \mathrm{Exp}(a) \)
\STATE Compute cumulative sums: \( S_k = \sum_{j=1}^k s_j \) for \( k = 1, \dots, \kappa \)
\STATE Find the smallest index \( n \) such that \( S_n > t \)
\STATE Initialize \( \tau \leftarrow 0 \)
\FOR{ \( k = 1 \) to \( n-1 \) }
    \STATE \( \tau \leftarrow \tau + (-1)^{k-1} s_k \)
\ENDFOR
\STATE \( \tau \leftarrow \tau + (-1)^{n-1}(t - S_{n-1}) \)
\STATE \textbf{Output:} \( \tau_t = \tau \)
\end{algorithmic}
\label{simulating}
\end{algorithm}

\subsection{Implementation Details} \label{implementation}

For all experiments we sampled from the Kac process using the following Algorithm \ref{sampling}.
We outline the sampling procedure for the 1D Kac density, for the multidimensional process we use this procedure component-wise.

\begin{algorithm}[ht!] 
\caption{Sampling the 1D Kac density via inverse‐CDF}
    \begin{algorithmic}[1]
      \STATE Draw \(U\sim\mathcal U(0,1)\).
      \IF{\(U < e^{-a\,t}\)}
        \STATE Draw \(\sigma\in\{\pm1\}\) with \(\Pr(\sigma=\pm1)=\tfrac12\).
        \STATE \textbf{Output} \(\sigma\,c\,t\) from the atomic terms in \eqref{telegraph-formula-1d-dirac}.
      \ELSE
        \STATE Draw \(V\sim\mathcal U(0,1)\).
        \STATE Compute $x \;\approx\; F_{\rm cont}^{-1}(V;\,t)$, where \(F_{\rm cont}\) is the continuous‐part CDF in \eqref{telegraph-formula-1d-dirac-cont}, numerically approximated via a precomputed inverse‐CDF table.
        \STATE Draw \(\sigma\in\{\pm1\}\) with \(\Pr(\sigma=\pm1)=\tfrac12\).
        \STATE \textbf{Output} \(\sigma\,x\).
    \ENDIF
    \end{algorithmic}
    \label{sampling}
\end{algorithm}

\paragraph{2D Toy Experiment.}
As the target distribution we use a 9-component Gaussian mixture model (GMM) in $\mathbb{R}^2$ given by the modes
\[
\{\mu_i\}_{i=1}^9
=\Bigl\{(u_1,u_2)\;\Bigm|\;u_j\in\{-1,0,1\}\Bigr\}.
\]
We use uniform weights and a very small, shared isotropic covariance:
\[
w_i = \frac1K,
\qquad
\Sigma_i = \sigma^2 I_d,
\qquad
\sigma = 10^{-4},
\]
and define the mixture density
\[
p(x)
=\sum_{i=1}^K w_i\,\mathcal{N}\bigl(x;\,\mu_i,\Sigma_i\bigr).
\]
We set \(T=1\) for all models.  During inference we use the exact latent
\[
X_t \;:=\; X_0 + K_t,
\]
i.e.\ we draw \(X_0\) directly from our GMM and add the exact Kac noise \(K_t\), thereby avoiding any sampling approximation in this toy example. Analogously, in the diffusion case, we use a standard Brownian motion $B_t$ instead of $K_t$. All models were trained for \(500{,}000\) iterations with a learning rate of \(5\times10^{-4}\) and a batch size of $256$.  For validation we compute the negative log-likelihood (NLL) of \(N\) model-generated samples under the true GMM density:
\[
\mathrm{NLL}
\;=\;
-\frac{1}{N}\sum_{n=1}^N \log p\bigl(x^{(n)}\bigr),
\]
where \(x^{(n)}\) are drawn from the learned model. We set $N = 5000$. For the velocity field we used a fully‐connected MLP with three hidden layers of width \(w=256\) and ReLU activations. Specifically for the diffusion model we required time truncation, namely we train the flow on the interval \([\varepsilon,\,T]\) with \(\varepsilon=10^{-15}\). We used the torchdiffeq \cite{torchdiffeq} package to simulate the ODEs with ``dopri5'' adaptive step size solver. 

\paragraph{CIFAR Experiment.}
We trained all models for $400$k iterations, a batch size of $128$ and a constant learning rate of \(2\times10^{-4}\). We used the torchdiffeq \cite{torchdiffeq} package to simulate the ODEs with an explicit Euler scheme.  We use the time dependent U-Net architecture from \cite{nichol2021improved}. We use the same network parameters as in \cite{tong2024improving}, also employing an EMA with decay rate $0.9999$. We do not use any data augmentation for training. Note that in our implementation of the diffusion model with schedule $g(t)=t$, we use the common time truncation of training on the interval $[\varepsilon,1]$ with $\varepsilon=10^{-5}$ (see \cite{song2021scorebasedgenerativemodelingstochastic}). 
Both training and evaluation was done using fixed random seeds. 
The FID is computed on $50$k samples using the torch fidelity package \cite{obukhov2020torchfidelity}. The code is available at https://github.com/JChemseddine/telegraphers. 

\newpage
\subsection{Further numerical examples} \label{further_examples}
In the Figures \ref{fig:cifar-3} and \ref{fig:cifar-4}, we showcase the CIFAR-10 images generated by our Kac model for other damping/velocity pairs $(a,c)$. The corresponding FID scores can be found in Table \ref{fid_results}.

\begin{figure}[ht!] 
  \centering
  \begin{minipage}[b]{0.3\textwidth}
    \centering
    \includegraphics[width=\linewidth]{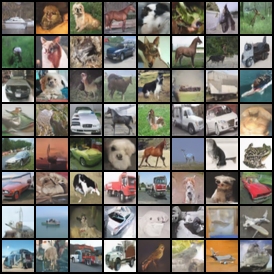}
    \par\smallskip
    {\small \( (a,c)=(25,3)\)}
  \end{minipage}
 \hfill
  \begin{minipage}[b]{0.3\textwidth}
    \centering
    \includegraphics[width=\linewidth]{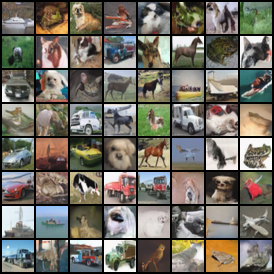}
    \par\smallskip
   {\small \( (a,c)=(100,5)\)}
  \end{minipage}
  \hfill
  \begin{minipage}[b]{0.3\textwidth}
    \centering
    \includegraphics[width=\linewidth]{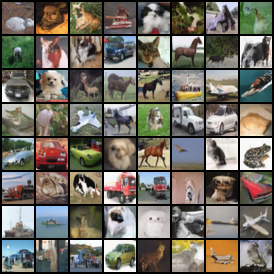}
    \par\smallskip
    {\small \( (a,c)=(900,30)\)}
  \end{minipage}

  \caption{Further generation results of the mean-reverting Kac models with $g(t)=t^2$. }
  \label{fig:cifar-3}
\end{figure}

\begin{figure}[ht!] 
  \centering
  \begin{minipage}[b]{0.3\textwidth}
    \centering
    \includegraphics[width=\linewidth]{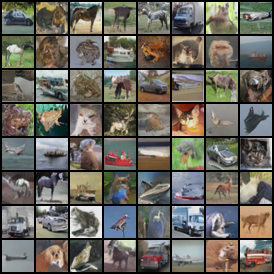}
    \par\smallskip
   {\small \( (a,c)=(100,1)\)}
  \end{minipage}
 \hfill
  \begin{minipage}[b]{0.3\textwidth}
    \centering
    \includegraphics[width=\linewidth]{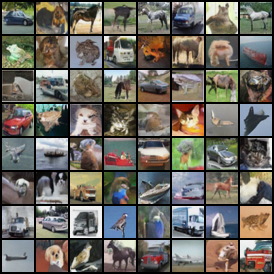}
    \par\smallskip
    {\small \( (a,c)=(100,3)\)}
  \end{minipage}
  \hfill
  \begin{minipage}[b]{0.3\textwidth}
    \centering
    \includegraphics[width=\linewidth]{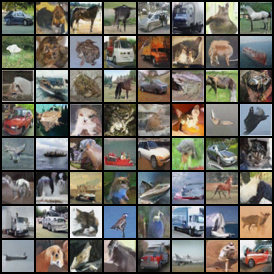}
    \par\smallskip
    {\small \( (a,c)=(900,10)\)}
  \end{minipage}

  \caption{Further generation results of the mean-reverting Kac models with $g(t)=t$. }
  \label{fig:cifar-4}
\end{figure}

\paragraph{Nearest Neighbour Analysis.}
To assess sample fidelity and detect potential memorization, we perform an $\ell_2$-nearest neighbour (NN) analysis in pixel space. For each configuration $(a,c)$ under consideration, we draw $50\,$k samples $\{\hat{x}_i^{(a,c)}\}_{i=1}^{50{,}000}$ from the trained model. For a subset of generated samples $\{\hat{x}_j^{(a,c)}\}_{j\in\mathcal{J}}$, we retrieve the closest training example under the Euclidean distance on raw pixels,
\begin{equation}
x_{\mathrm{NN}}(\hat{x}) \;=\; \arg\min_{x \in \mathcal{D}_{\mathrm{train}}} \|\hat{x}-x\|_2.
\end{equation}
We then visualize triplets consisting of the generated image $\hat{x}$, its nearest neighbour $x_{\mathrm{NN}}(\hat{x})$, and the pixel-wise difference. 
For each time schedule $g(t)=t$ and $g(t)=t^2$,
we showcase this analysis for three choices of parameters $(a,c)$, respectively. The results are reported in Figure \ref{fig:nn_analysis_t2} and \ref{fig:nn_analysis_t}.

\begin{figure}[ht!]
  \centering

  \begin{minipage}{0.92\textwidth}
    \centering
    \includegraphics[width=\linewidth]{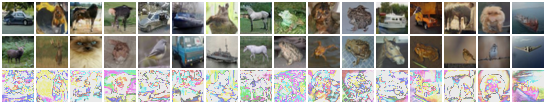}
    \par\smallskip
    {\small \( (a,c)=(25,1)\)}
  \end{minipage}

  \par\medskip

  \begin{minipage}{0.92\textwidth}
    \centering
    \includegraphics[width=\linewidth]{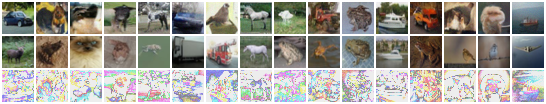}
    \par\smallskip
    {\small \( (a,c)=(100,5)\)}
  \end{minipage}

  \par\medskip

  \begin{minipage}{0.92\textwidth}
    \centering
    \includegraphics[width=\linewidth]{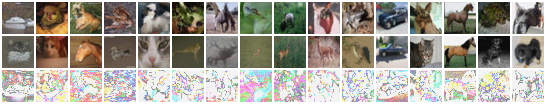}
    \par\smallskip
    {\small \( (a,c)=(900,10)\)}
  \end{minipage}

  \caption{Nearest neighbour analysis for three representative $(a,c)$ pairs with schedule $g(t)=t^2$.
  Each panel visualizes three rows: \emph{generated sample, $\ell_2$-nearest training neighbour, pixel-wise absolute difference}.}
  \label{fig:nn_analysis_t2}
\end{figure}

\begin{figure}[ht!]
  \centering

  \begin{minipage}{0.92\textwidth}
    \centering
    \includegraphics[width=\linewidth]{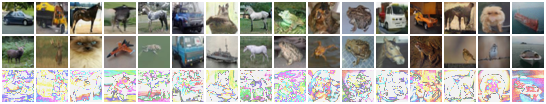}
    \par\smallskip
    {\small \( (a,c)=(25,2)\)}
  \end{minipage}

  \par\medskip

  \begin{minipage}{0.92\textwidth}
    \centering
    \includegraphics[width=\linewidth]{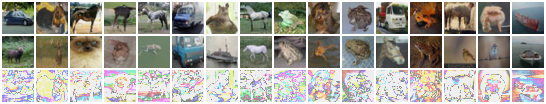}
    \par\smallskip
    {\small \( (a,c)=(100,3)\)}
  \end{minipage}

  \par\medskip

  \begin{minipage}{0.92\textwidth}
    \centering
    \includegraphics[width=\linewidth]{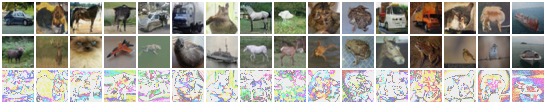}
    \par\smallskip
    {\small \( (a,c)=(900,10)\)}
  \end{minipage}

  \caption{Nearest neighbour analysis for three representative $(a,c)$ pairs with schedule $g(t)=t$.
  Each panel visualizes three rows: \emph{generated sample, $\ell_2$-nearest training neighbour, pixel-wise absolute difference}.}
  \label{fig:nn_analysis_t}
\end{figure}


\end{document}